\newcommand{\jnote}[1]{{\color{red}[JHS: #1]}}
\newcommand{\cO}{\mathcal{O}}
\newcommand{\cW}{\mathfrak{W}}
\newcommand{\Operad}{{\O}perad }
\newcommand{\Operads}{{\O}perads }
  \newcommand{\showoptional}{1}
  \newcommand{\ismain}{0}
  \newcommand{\lecturenotes}{0}
\tikzset{->-/.style={decoration={
  markings,
  mark=at position .5 with {\arrow{>}}},postaction={decorate}}}
\tikzset{-<-/.style={decoration={
  markings,
  mark=at position .5 with {\arrow{<}}},postaction={decorate}}}
\tikzstyle{bwSpider}=[
 \tikzstyle{wbSpider}=[
       \definecolor{quantumviolet}{RGB}{79, 4, 134}
\tikzstyle{oWire}=[line width = 1pt, color=black]
\tikzstyle{lightWire}=[line width = 0.5pt, color=compositionalitydarkpink]
\tikzstyle{QWire}=[line width = 1pt, color=quantumviolet]
\tikzstyle{qWire}=[line width = 1pt, color=compositionalitydarkpink]
\tikzstyle{cWire}=[color=darkgray,line width = .75pt]
\tikzstyle{CqWire}=[line width = 1pt, color=compositionalitydarkpink,->-]
\tikzstyle{CQWire}=[line width = 1pt, color=quantumviolet,->-]
\tikzstyle{CcWire}=[color=darkgray,line width = .75pt,->-]
\tikzstyle{RqWire}=[line width = 1pt, color=compositionalitydarkpink,-<-]
\tikzstyle{RQWire}=[line width = 1pt, color=quantumviolet,-<-]
\tikzstyle{RcWire}=[color=darkgray,line width = .75pt,-<-]
\tikzstyle{epiCopoint}=[regular polygon,regular polygon sides=3,draw,scale=0.8,inner sep=-0.5pt,minimum width=5mm,fill=white,regular polygon rotate=0,line width=.5pt]
\tikzstyle{epiPoint}=[regular polygon,regular polygon sides=3,draw,scale=0.8,inner sep=-0.5pt,minimum width=5mm,fill=white,regular polygon rotate=180,line width=.5pt]
\tikzstyle{epiPointWide}=[regular polygon,regular polygon sides=3,draw,xscale=0.75,yscale=.55,inner sep=-0.5pt,minimum width=8mm,fill=white,regular polygon rotate=180,line width=.5pt]
\tikzstyle{epiCopointWide}=[regular polygon,regular polygon sides=3,draw,xscale=0.75, yscale=.55,inner sep=-0.5pt,minimum width=8mm,fill=white,regular polygon rotate=0,line width=.5pt]
\tikzstyle{epiBox}=[fill=white,draw, line width = .5pt,inner sep=0.6mm,font=\footnotesize,minimum height=3mm,minimum width=3mm]
\tikzstyle{epiBoxWide}=[fill=white,draw, line width = .5pt,inner sep=0.6mm,font=\footnotesize,minimum height=3.5mm,minimum width=5mm]
\tikzstyle{epiBoxVeryWide}=[fill=white,draw, line width = .5pt,inner sep=0.6mm,font=\footnotesize,minimum height=3mm,minimum width=7mm]
\tikzstyle{infpoint}=[regular polygon,regular polygon sides=3,draw,scale=0.75,inner sep=-0.5pt,minimum width=9mm,fill=white,regular polygon rotate=90]
\tikzstyle{infcopoint}=[regular polygon,regular polygon sides=3,draw,scale=0.75,inner sep=-0.5pt,minimum width=9mm,fill=white,regular polygon rotate=270]
 \tikzstyle{infupground}=[circuit ee IEC,thick,ground,rotate=0,xscale=2.5,yscale=2]
\tikzstyle{env}=[copoint,regular polygon rotate=0,minimum width=0.2cm, fill=black]
\tikzstyle{probs}=[shape=semicircle,fill=white,draw=black,shape border rotate=180,minimum width=1.2cm]
\tikzstyle{every picture}=[baseline=-0.25em,scale=0.5]
\tikzstyle{dotpic}=[] 
\tikzstyle{diredges}=[every to/.style={diredge}]
\tikzstyle{math matrix}=[matrix of math nodes,left delimiter=(,right delimiter=),inner sep=2pt,column sep=1em,row sep=0.5em,nodes={inner sep=0pt},text height=1.5ex, text depth=0.25ex]
\tikzstyle{inline text}=[text height=1.5ex, text depth=0.25ex,yshift=0.5mm]
\tikzstyle{label}=[font=\footnotesize,text height=1.5ex, text depth=0.25ex,yshift=0.5mm]
\tikzstyle{left label}=[label,anchor=east,xshift=1.5mm]
\tikzstyle{right label}=[label,anchor=west,xshift=-1mm]
\tikzstyle{up label}=[label,anchor=south,yshift=-1mm]
\tikzstyle{rght label}=[label,anchor=west,xshift=-1mm]
\tikzstyle{braceedge}=[decorate,decoration={brace,amplitude=2mm,raise=-1mm}]
\tikzstyle{small braceedge}=[decorate,decoration={brace,amplitude=1mm,raise=-1mm}]
\tikzstyle{doubled}=[line width=1.6pt] 
\tikzstyle{boldedge}=[doubled,shorten <=-0.17mm,shorten >=-0.17mm]
\tikzstyle{boldedgegray}=[doubled,gray,shorten <=-0.17mm,shorten >=-0.17mm]
\tikzstyle{singleedgegray}=[gray]
\tikzstyle{semidoubled}=[line width=1.4pt] 
\tikzstyle{semiboldedgegray}=[semidoubled,gray,shorten <=-0.17mm,shorten >=-0.17mm]
\tikzstyle{boxedge}=[semiboldedgegray]
\tikzstyle{boldedgedashed}=[very thick,dashed,shorten <=-0.17mm,shorten >=-0.17mm]
\tikzstyle{vboldedgedashed}=[doubled,dashed,shorten <=-0.17mm,shorten >=-0.17mm]
\tikzstyle{left hook arrow}=[left hook-latex]
\tikzstyle{right hook arrow}=[right hook-latex]
\tikzstyle{sembracket}=[line width=0.5pt,shorten <=-0.07mm,shorten >=-0.07mm]
\tikzstyle{causal edge}=[->,thick,gray]
\tikzstyle{causal nondir}=[thick,gray]
\tikzstyle{timeline}=[thick,gray, dashed]
\tikzstyle{cedge}=[<->,thick,gray!70!white]
\tikzstyle{empty diagram}=[draw=gray!40!white,dashed,shape=rectangle,minimum width=1cm,minimum height=1cm]
\tikzstyle{empty diagram small}=[draw=gray!50!white,dashed,shape=rectangle,minimum width=0.6cm,minimum height=0.5cm]
\tikzstyle{arrow}=[->]
\tikzstyle{dot}=[inner sep=0mm,minimum width=2mm,minimum height=2mm,draw,shape=circle]
\tikzstyle{bigdot}=[inner sep=0mm,minimum width=5mm,minimum height=5mm,draw,shape=circle]
\tikzstyle{leak}=[white dot, shape=regular polygon, minimum size=3.3 mm, regular polygon sides=3, outer sep=-0.2mm, regular polygon rotate=270]
\tikzstyle{proj}=[regular polygon,regular polygon sides=4,draw,scale=0.75,inner sep=-0.5pt,minimum width=6mm,fill=white]
\tikzstyle{projOut}=[regular polygon,regular polygon sides=3,draw,scale=0.75,inner sep=-0.5pt,minimum width=7.5mm,fill=white,regular polygon rotate=180]
\tikzstyle{projIn}=[regular polygon,regular polygon sides=3,draw,scale=0.75,inner sep=-0.5pt,minimum width=7.5mm,fill=white]
\tikzstyle{Vleak}=[white dot, shape=regular polygon, minimum size=3.3 mm, regular polygon sides=3, outer sep=-0.2mm, regular polygon rotate=90]
\tikzstyle{dleak}=[white dot, line width=1.6pt, shape=regular polygon, minimum size=3.3 mm, regular polygon sides=3, outer sep=-0.2mm, regular polygon rotate=270]
\tikzstyle{Wsquare}=[white dot, shape=regular polygon, rounded corners=0.8 mm, minimum size=3.3 mm, regular polygon sides=3, outer sep=-0.2mm]
\tikzstyle{Wsquareadj}=[white dot, shape=regular polygon, rounded corners=0.8 mm, minimum size=3.3 mm, regular polygon sides=3, outer sep=-0.2mm, regular polygon rotate=180]
\tikzstyle{ddot}=[inner sep=0mm, doubled, minimum width=2.5mm,minimum height=2.5mm,draw,shape=circle]
\tikzstyle{spider}=[inner sep=0mm,minimum width=1mm,minimum height=1mm,draw=compositionalitydarkpink,shape=circle,fill=compositionalitydarkpink]
\tikzstyle{black dot}=[dot,fill=black]
\tikzstyle{small black dot}=[inner sep=0mm,minimum width=.5mm,minimum height=.5mm,draw,shape=circle,fill=black]
\tikzstyle{white dot}=[dot,fill=white,text depth=-0.2mm]
\tikzstyle{big white dot}=[dot,fill=white,text depth=-0.2mm, minimum width = 2.5mm]
\tikzstyle{white Wsquare}=[Wsquare,fill=gray,,text depth=-0.2mm]
\tikzstyle{white Wsquareadj}=[Wsquareadj,fill=white,,text depth=-0.2mm]
\tikzstyle{green dot}=[white dot] 
\tikzstyle{gray dot}=[dot,fill=gray!40!white,,text depth=-0.2mm]
\tikzstyle{red dot}=[gray dot] 
\tikzstyle{black ddot}=[ddot,fill=black]
\tikzstyle{white ddot}=[ddot,fill=white]
\tikzstyle{gray ddot}=[ddot,fill=gray!40!white]
\tikzstyle{gray edge}=[gray!60!white]
\tikzstyle{small dot}=[inner sep=0.5mm,minimum width=0pt,minimum height=0pt,draw,shape=circle]
\tikzstyle{small white dot}=[small dot,fill=white]
\tikzstyle{small gray dot}=[small dot,fill=gray!40!white]
\tikzstyle{causal dot}=[inner sep=0.4mm,minimum width=0pt,minimum height=0pt,draw=white,shape=circle,fill=gray!40!white]
\tikzstyle{phase dimensions}=[minimum size=5mm,font=\footnotesize,rectangle,rounded corners=2.5mm,inner sep=0.2mm,outer sep=-2mm]
\tikzstyle{dphase dimensions}=[minimum size=5mm,font=\footnotesize,rectangle,rounded corners=2.5mm,inner sep=0.2mm,outer sep=-2mm]
\tikzstyle{white phase dot}=[dot,fill=white,phase dimensions]
\tikzstyle{white phase ddot}=[ddot,fill=white,dphase dimensions]
\tikzstyle{white rect ddot}=[draw=black,fill=white,doubled,minimum size=5mm,font=\footnotesize,rectangle,rounded corners=2.5mm,inner sep=0.2mm]
\tikzstyle{gray rect ddot}=[draw=black,fill=gray!40!white,doubled,minimum size=6mm,font=\footnotesize,rectangle,rounded corners=3mm]
\tikzstyle{gray phase dot}=[dot,fill=gray!40!white,phase dimensions]
\tikzstyle{gray phase ddot}=[ddot,fill=gray!40!white,dphase dimensions]
\tikzstyle{grey phase dot}=[gray phase dot]
\tikzstyle{grey phase ddot}=[gray phase ddot]
\tikzstyle{small phase dimensions}=[minimum size=4mm,font=\tiny,rectangle,rounded corners=2mm,inner sep=0.2mm,outer sep=-2mm]
\tikzstyle{small dphase dimensions}=[minimum size=4mm,font=\tiny,rectangle,rounded corners=2mm,inner sep=0.2mm,outer sep=-2mm]
\tikzstyle{small gray phase dot}=[dot,fill=gray!40!white,small phase dimensions]
\tikzstyle{small gray phase ddot}=[ddot,fill=gray!40!white,small dphase dimensions]
\tikzstyle{small map}=[draw,shape=rectangle,minimum height=4mm,minimum width=4mm,fill=white]
\tikzstyle{cnot}=[fill=white,shape=circle,inner sep=-1.4pt]
\tikzstyle{asym hadamard}=[fill=white,draw,shape=NEbox,inner sep=0.6mm,font=\footnotesize,minimum height=4mm]
\tikzstyle{asym hadamard conj}=[fill=white,draw,shape=NWbox,inner sep=0.6mm,font=\footnotesize,minimum height=4mm]
\tikzstyle{asym hadamard dag}=[fill=white,draw,shape=SEbox,inner sep=0.6mm,font=\footnotesize,minimum height=4mm]
\tikzstyle{hadamard}=[fill=white,draw,inner sep=0.6mm,font=\footnotesize,minimum height=4mm,minimum width=4mm]
\tikzstyle{small hadamard}=[fill=white,draw,inner sep=0.6mm,minimum height=1.5mm,minimum width=1.5mm]
\tikzstyle{small hadamard rotate}=[small hadamard,rotate=45]
\tikzstyle{dhadamard}=[hadamard,doubled]
\tikzstyle{small dhadamard}=[small hadamard,doubled]
\tikzstyle{small dhadamard rotate}=[small hadamard rotate,doubled]
\tikzstyle{antipode}=[white dot,inner sep=0.3mm,font=\footnotesize]
\tikzstyle{scalar}=[diamond,draw,inner sep=0.5pt,font=\small]
\tikzstyle{dscalar}=[diamond,doubled, draw,inner sep=0.5pt,font=\small]
\tikzstyle{small box}=[rectangle,inline text,fill=white,draw,minimum height=5mm,yshift=-0.5mm,minimum width=5mm,font=\small]
\tikzstyle{small gray box}=[small box,fill=gray!30]
\tikzstyle{medium box}=[rectangle,inline text,fill=white,draw,minimum height=5mm,yshift=-0.5mm,minimum width=10mm,font=\small]
\tikzstyle{square box}=[small box] 
\tikzstyle{medium gray box}=[small box,fill=gray!30]
\tikzstyle{semilarge box}=[rectangle,inline text,fill=white,draw,minimum height=5mm,yshift=-0.5mm,minimum width=12.5mm,font=\small]
\tikzstyle{large box}=[rectangle,inline text,fill=white,draw,minimum height=5mm,yshift=-0.5mm,minimum width=15mm,font=\small]
\tikzstyle{large gray box}=[small box,fill=gray!30]
\tikzstyle{Bayes box}=[rectangle,fill=black,draw, minimum height=3mm, minimum width=3mm]
\tikzstyle{gray square point}=[small box,fill=gray!50]
\tikzstyle{dphase box white}=[dhadamard]
\tikzstyle{dphase box gray}=[dhadamard,fill=gray!50!white]
\tikzstyle{phase box white}=[hadamard]
\tikzstyle{phase box gray}=[hadamard,fill=gray!50!white]
\tikzstyle{point}=[regular polygon,regular polygon sides=3,draw,scale=0.75,inner sep=-0.5pt,minimum width=9mm,fill=white,regular polygon rotate=180]
\tikzstyle{point nosep}=[regular polygon,regular polygon sides=3,draw,scale=0.75,inner sep=-2pt,minimum width=9mm,fill=white,regular polygon rotate=180]
\tikzstyle{copoint}=[regular polygon,regular polygon sides=3,draw,scale=0.75,inner sep=-0.5pt,minimum width=9mm,fill=white]
\tikzstyle{dpoint}=[point,doubled]
\tikzstyle{dcopoint}=[copoint,doubled]
\tikzstyle{pointgrow}=[shape=cornerpoint,kpoint common,scale=0.75,inner sep=3pt]
\tikzstyle{pointgrow dag}=[shape=cornercopoint,kpoint common,scale=0.75,inner sep=3pt]
\tikzstyle{wide copoint}=[fill=white,draw,shape=isosceles triangle,shape border rotate=90,isosceles triangle stretches=true,inner sep=0pt,minimum width=1.5cm,minimum height=6.12mm]
\tikzstyle{wide point}=[fill=white,draw,shape=isosceles triangle,shape border rotate=-90,isosceles triangle stretches=true,inner sep=0pt,minimum width=1.5cm,minimum height=6.12mm,yshift=-0.0mm]
\tikzstyle{wide point plus}=[fill=white,draw,shape=isosceles triangle,shape border rotate=-90,isosceles triangle stretches=true,inner sep=0pt,minimum width=1.74cm,minimum height=7mm,yshift=-0.0mm]
\tikzstyle{wide dpoint}=[fill=white,doubled,draw,shape=isosceles triangle,shape border rotate=-90,isosceles triangle stretches=true,inner sep=0pt,minimum width=1.5cm,minimum height=6.12mm,yshift=-0.0mm]
\tikzstyle{tinypoint}=[regular polygon,regular polygon sides=3,draw,scale=0.55,inner sep=-0.15pt,minimum width=6mm,fill=white,regular polygon rotate=180]
\tikzstyle{white point}=[point]
\tikzstyle{white dpoint}=[dpoint]
\tikzstyle{green point}=[white point] 
\tikzstyle{white copoint}=[copoint]
\tikzstyle{gray point}=[point,fill=gray!40!white]
\tikzstyle{gray dpoint}=[gray point,doubled]
\tikzstyle{red point}=[gray point] 
\tikzstyle{gray copoint}=[copoint,fill=gray!40!white]
\tikzstyle{gray dcopoint}=[gray copoint,doubled]
\tikzstyle{white point guide}=[regular polygon,regular polygon sides=3,font=\scriptsize,draw,scale=0.65,inner sep=-0.5pt,minimum width=9mm,fill=white,regular polygon rotate=180]
\tikzstyle{black point}=[point,fill=black,font=\color{white}]
\tikzstyle{black copoint}=[copoint,fill=black,font=\color{white}]
\tikzstyle{tiny gray point}=[tinypoint,fill=gray!40!white]
\tikzstyle{diredge}=[->]
\tikzstyle{ddiredge}=[<->]
\tikzstyle{rdiredge}=[<-]
\tikzstyle{thickdiredge}=[->, very thick]
\tikzstyle{pointer edge}=[->,very thick,gray]
\tikzstyle{pointer edge part}=[very thick,gray]
\tikzstyle{dashed edge}=[dashed]
\tikzstyle{thick dashed edge}=[very thick,dashed]
\tikzstyle{thick gray dashed edge}=[thick dashed edge,gray!40]
\tikzstyle{thick map edge}=[very thick,|->]
\newcommand{\boxshape}[3]{%
\pgfdeclareshape{#1}{
\inheritsavedanchors[from=rectangle] 
\inheritanchorborder[from=rectangle]
\inheritanchor[from=rectangle]{center}
\inheritanchor[from=rectangle]{north}
\inheritanchor[from=rectangle]{south}
\inheritanchor[from=rectangle]{west}
\inheritanchor[from=rectangle]{east}
\backgroundpath{
\southwest \pgf@xa=\pgf@x \pgf@ya=\pgf@y
\northeast \pgf@xb=\pgf@x \pgf@yb=\pgf@y

\@tempdima=#2
\@tempdimb=#3

\pgfpathmoveto{\pgfpoint{\pgf@xa - 5pt + \@tempdima}{\pgf@ya}}
\pgfpathlineto{\pgfpoint{\pgf@xa - 5pt - \@tempdima}{\pgf@yb}}
\pgfpathlineto{\pgfpoint{\pgf@xb + 5pt + \@tempdimb}{\pgf@yb}}
\pgfpathlineto{\pgfpoint{\pgf@xb + 5pt - \@tempdimb}{\pgf@ya}}
\pgfpathlineto{\pgfpoint{\pgf@xa - 5pt + \@tempdima}{\pgf@ya}}
\pgfpathclose
}
}}
\tikzstyle{cloud}=[shape=cloud,draw,minimum width=1.5cm,minimum height=1.5cm]
\tikzstyle{map}=[draw,shape=NEbox,inner sep=1pt,minimum height=4mm,fill=white]
\tikzstyle{dashedmap}=[draw,dashed,shape=NEbox,inner sep=2pt,minimum height=6mm,fill=white]
\tikzstyle{mapdag}=[draw,shape=SEbox,inner sep=1pt,minimum height=4mm,fill=white]
\tikzstyle{mapadj}=[draw,shape=SEbox,inner sep=2pt,minimum height=6mm,fill=white]
\tikzstyle{maptrans}=[draw,shape=SWbox,inner sep=2pt,minimum height=6mm,fill=white]
\tikzstyle{mapconj}=[draw,shape=NWbox,inner sep=2pt,minimum height=6mm,fill=white]
\tikzstyle{medium map}=[draw,shape=NEbox,inner sep=2pt,minimum height=6mm,fill=white,minimum width=7mm]
\tikzstyle{medium map dag}=[draw,shape=SEbox,inner sep=2pt,minimum height=6mm,fill=white,minimum width=7mm]
\tikzstyle{medium map adj}=[draw,shape=SEbox,inner sep=2pt,minimum height=6mm,fill=white,minimum width=7mm]
\tikzstyle{medium map trans}=[draw,shape=SWbox,inner sep=2pt,minimum height=6mm,fill=white,minimum width=7mm]
\tikzstyle{medium map conj}=[draw,shape=NWbox,inner sep=2pt,minimum height=6mm,fill=white,minimum width=7mm]
\tikzstyle{semilarge map}=[draw,shape=NEbox,inner sep=2pt,minimum height=6mm,fill=white,minimum width=9.5mm]
\tikzstyle{semilarge map trans}=[draw,shape=SWbox,inner sep=2pt,minimum height=6mm,fill=white,minimum width=9.5mm]
\tikzstyle{semilarge map adj}=[draw,shape=SEbox,inner sep=2pt,minimum height=6mm,fill=white,minimum width=9.5mm]
\tikzstyle{semilarge map dag}=[draw,shape=SEbox,inner sep=2pt,minimum height=6mm,fill=white,minimum width=9.5mm]
\tikzstyle{semilarge map conj}=[draw,shape=NWbox,inner sep=2pt,minimum height=6mm,fill=white,minimum width=9.5mm]
\tikzstyle{large map}=[draw,shape=NEbox,inner sep=2pt,minimum height=6mm,fill=white,minimum width=12mm]
\tikzstyle{large map conj}=[draw,shape=NWbox,inner sep=2pt,minimum height=6mm,fill=white,minimum width=12mm]
\tikzstyle{very large map}=[draw,shape=NEbox,inner sep=2pt,minimum height=6mm,fill=white,minimum width=17mm]
\tikzstyle{medium dmap}=[draw,doubled,shape=NEbox,inner sep=2pt,minimum height=6mm,fill=white,minimum width=7mm]
\tikzstyle{medium dmap dag}=[draw,doubled,shape=SEbox,inner sep=2pt,minimum height=6mm,fill=white,minimum width=7mm]
\tikzstyle{medium dmap adj}=[draw,doubled,shape=SEbox,inner sep=2pt,minimum height=6mm,fill=white,minimum width=7mm]
\tikzstyle{medium dmap trans}=[draw,doubled,shape=SWbox,inner sep=2pt,minimum height=6mm,fill=white,minimum width=7mm]
\tikzstyle{medium dmap conj}=[draw,doubled,shape=NWbox,inner sep=2pt,minimum height=6mm,fill=white,minimum width=7mm]
\tikzstyle{semilarge dmap}=[draw,doubled,shape=NEbox,inner sep=2pt,minimum height=6mm,fill=white,minimum width=9.5mm]
\tikzstyle{semilarge dmap trans}=[draw,doubled,shape=SWbox,inner sep=2pt,minimum height=6mm,fill=white,minimum width=9.5mm]
\tikzstyle{semilarge dmap adj}=[draw,doubled,shape=SEbox,inner sep=2pt,minimum height=6mm,fill=white,minimum width=9.5mm]
\tikzstyle{semilarge dmap dag}=[draw,doubled,shape=SEbox,inner sep=2pt,minimum height=6mm,fill=white,minimum width=9.5mm]
\tikzstyle{semilarge dmap conj}=[draw,doubled,shape=NWbox,inner sep=2pt,minimum height=6mm,fill=white,minimum width=9.5mm]
\tikzstyle{large dmap}=[draw,doubled,shape=NEbox,inner sep=2pt,minimum height=6mm,fill=white,minimum width=12mm]
\tikzstyle{large dmap conj}=[draw,doubled,shape=NWbox,inner sep=2pt,minimum height=6mm,fill=white,minimum width=12mm]
\tikzstyle{large dmap trans}=[draw,doubled,shape=SWbox,inner sep=2pt,minimum height=6mm,fill=white,minimum width=12mm]
\tikzstyle{large dmap adj}=[draw,doubled,shape=SEbox,inner sep=2pt,minimum height=6mm,fill=white,minimum width=12mm]
\tikzstyle{large dmap dag}=[draw,doubled,shape=SEbox,inner sep=2pt,minimum height=6mm,fill=white,minimum width=12mm]
\tikzstyle{very large dmap}=[draw,doubled,shape=NEbox,inner sep=2pt,minimum height=6mm,fill=white,minimum width=19.5mm]
\tikzstyle{muxbox}=[draw,shape=rectangle,minimum height=3mm,minimum width=3mm,fill=white]
\tikzstyle{dmuxbox}=[muxbox,doubled]
\tikzstyle{box}=[draw,shape=rectangle,inner sep=2pt,minimum height=6mm,minimum width=6mm,fill=white]
\tikzstyle{dbox}=[draw,doubled,shape=rectangle,inner sep=2pt,minimum height=6mm,minimum width=6mm,fill=white]
\tikzstyle{dmap}=[draw,doubled,shape=NEbox,inner sep=2pt,minimum height=6mm,fill=white]
\tikzstyle{dmapdag}=[draw,doubled,shape=SEbox,inner sep=2pt,minimum height=6mm,fill=white]
\tikzstyle{dmapadj}=[draw,doubled,shape=SEbox,inner sep=2pt,minimum height=6mm,fill=white]
\tikzstyle{dmaptrans}=[draw,doubled,shape=SWbox,inner sep=2pt,minimum height=6mm,fill=white]
\tikzstyle{dmapconj}=[draw,doubled,shape=NWbox,inner sep=2pt,minimum height=6mm,fill=white]
\tikzstyle{ddmap}=[draw,doubled,dashed,shape=NEbox,inner sep=2pt,minimum height=6mm,fill=white]
\tikzstyle{ddmapdag}=[draw,doubled,dashed,shape=SEbox,inner sep=2pt,minimum height=6mm,fill=white]
\tikzstyle{ddmapadj}=[draw,doubled,dashed,shape=SEbox,inner sep=2pt,minimum height=6mm,fill=white]
\tikzstyle{ddmaptrans}=[draw,doubled,dashed,shape=SWbox,inner sep=2pt,minimum height=6mm,fill=white]
\tikzstyle{ddmapconj}=[draw,doubled,dashed,shape=NWbox,inner sep=2pt,minimum height=6mm,fill=white]
\tikzstyle{smap}=[draw,shape=sNEbox,fill=white]
\tikzstyle{smapdag}=[draw,shape=sSEbox,fill=white]
\tikzstyle{smapadj}=[draw,shape=sSEbox,fill=white]
\tikzstyle{smaptrans}=[draw,shape=sSWbox,fill=white]
\tikzstyle{smapconj}=[draw,shape=sNWbox,fill=white]
\tikzstyle{dsmap}=[draw,dashed,shape=sNEbox,fill=white]
\tikzstyle{dsmapdag}=[draw,dashed,shape=sSEbox,fill=white]
\tikzstyle{dsmaptrans}=[draw,dashed,shape=sSWbox,fill=white]
\tikzstyle{dsmapconj}=[draw,dashed,shape=sNWbox,fill=white]
\tikzstyle{mmap}=[draw,shape=mNEbox]
\tikzstyle{mmapdag}=[draw,shape=mSEbox]
\tikzstyle{mmaptrans}=[draw,shape=mSWbox]
\tikzstyle{mmapconj}=[draw,shape=mNWbox]
\tikzstyle{mmapgray}=[draw,fill=gray!40!white,shape=mNEbox]
\tikzstyle{smapgray}=[draw,fill=gray!40!white,shape=sNEbox]
\pgfmathsetmacro{\pgf@shorten@left}{\pgfkeysvalueof{/tikz/shorten left}}
\pgfmathsetmacro{\pgf@shorten@right}{\pgfkeysvalueof{/tikz/shorten right}}
\pgfmathsetmacro{\pgf@shorten@left}{\pgfkeysvalueof{/tikz/shorten left}}
\pgfmathsetmacro{\pgf@shorten@right}{\pgfkeysvalueof{/tikz/shorten right}}
\tikzstyle{kpoint common}=[draw,fill=white,inner sep=1pt,minimum height=4mm]
\tikzstyle{kpoint sc}=[shape=cornerpoint,kpoint common]
\tikzstyle{kpoint adjoint sc}=[shape=cornercopoint,kpoint common]
\tikzstyle{kpoint}=[shape=cornerpoint,shorten left=5pt,kpoint common]
\tikzstyle{kpoint adjoint}=[shape=cornercopoint,shorten left=5pt,kpoint common]
\tikzstyle{kpoint conjugate}=[shape=cornerpoint,shorten right=5pt,kpoint common]
\tikzstyle{kpoint transpose}=[shape=cornercopoint,shorten right=5pt,kpoint common]
\tikzstyle{kpoint symm}=[shape=cornerpoint,shorten left=5pt,shorten right=5pt,kpoint common]
\tikzstyle{wide kpoint sc}=[shape=cornerpoint,kpoint common, minimum width=1 cm]
\tikzstyle{wide kpointdag sc}=[shape=cornercopoint,kpoint common, minimum width=1 cm]
\tikzstyle{black kpoint}=[shape=cornerpoint,shorten left=5pt,kpoint common,fill=black,font=\color{white}]
\tikzstyle{black kpoint sm}=[shape=cornerpoint,shorten left=5pt,kpoint common,fill=black,font=\color{white},scale=0.75]
\tikzstyle{black kpoint adjoint}=[shape=cornercopoint,shorten left=5pt,kpoint common,fill=black,font=\color{white}]
\tikzstyle{black kpointadj}=[shape=cornercopoint,shorten left=5pt,kpoint common,fill=black,font=\color{white}]
\tikzstyle{black kpointadj sm}=[shape=cornercopoint,shorten left=5pt,kpoint common,fill=black,font=\color{white},scale=0.75]
\tikzstyle{black dkpoint}=[shape=cornerpoint,shorten left=5pt,kpoint common,fill=black, doubled,font=\color{white}]
\tikzstyle{black dkpoint adjoint}=[shape=cornercopoint,shorten left=5pt,kpoint common,fill=black, doubled,font=\color{white}]
\tikzstyle{black dkpointadj}=[shape=cornercopoint,shorten left=5pt,kpoint common,fill=black, doubled,font=\color{white}]
\tikzstyle{black dkpoint sm}=[shape=cornerpoint,shorten left=5pt,kpoint common,fill=black, doubled,font=\color{white},scale=0.75]
\tikzstyle{black dkpointadj sm}=[shape=cornercopoint,shorten left=5pt,kpoint common,fill=black, doubled,font=\color{white},scale=0.75]
\tikzstyle{kpointdag}=[kpoint adjoint]
\tikzstyle{kpointadj}=[kpoint adjoint]
\tikzstyle{kpointconj}=[kpoint conjugate]
\tikzstyle{kpointtrans}=[kpoint transpose]
\tikzstyle{big kpoint}=[kpoint, minimum width=1.2 cm, minimum height=8mm, inner sep=4pt, text depth=3mm]
\tikzstyle{wide kpoint}=[kpoint, minimum width=1 cm, inner sep=2pt]
\tikzstyle{wide kpointdag}=[kpointdag, minimum width=1 cm, inner sep=2pt]
\tikzstyle{wide kpointconj}=[kpointconj, minimum width=1 cm, inner sep=2pt]
\tikzstyle{wide kpointtrans}=[kpointtrans, minimum width=1 cm, inner sep=2pt]
\tikzstyle{wider kpoint}=[kpoint, minimum width=1.25 cm, inner sep=2pt]
\tikzstyle{wider kpointdag}=[kpointdag, minimum width=1.25 cm, inner sep=2pt]
\tikzstyle{wider kpointconj}=[kpointconj, minimum width=1.25 cm, inner sep=2pt]
\tikzstyle{wider kpointtrans}=[kpointtrans, minimum width=1.25 cm, inner sep=2pt]
\tikzstyle{gray kpoint}=[kpoint,fill=gray!50!white]
\tikzstyle{gray kpointdag}=[kpointdag,fill=gray!50!white]
\tikzstyle{gray kpointadj}=[kpointadj,fill=gray!50!white]
\tikzstyle{gray kpointconj}=[kpointconj,fill=gray!50!white]
\tikzstyle{gray kpointtrans}=[kpointtrans,fill=gray!50!white]
\tikzstyle{gray dkpoint}=[kpoint,fill=gray!50!white,doubled]
\tikzstyle{gray dkpointdag}=[kpointdag,fill=gray!50!white,doubled]
\tikzstyle{gray dkpointadj}=[kpointadj,fill=gray!50!white,doubled]
\tikzstyle{gray dkpointconj}=[kpointconj,fill=gray!50!white,doubled]
\tikzstyle{gray dkpointtrans}=[kpointtrans,fill=gray!50!white,doubled]
\tikzstyle{white label}=[draw,fill=white,rectangle,inner sep=0.7 mm]
\tikzstyle{gray label}=[draw,fill=gray!50!white,rectangle,inner sep=0.7 mm]
\tikzstyle{black label}=[draw,fill=black,rectangle,inner sep=0.7 mm]
\tikzstyle{dkpoint}=[kpoint,doubled]
\tikzstyle{wide dkpoint}=[wide kpoint,doubled]
\tikzstyle{dkpointdag}=[kpoint adjoint,doubled]
\tikzstyle{wide dkpointdag}=[wide kpointdag,doubled]
\tikzstyle{dkcopoint}=[kpoint adjoint,doubled]
\tikzstyle{dkpointadj}=[kpoint adjoint,doubled]
\tikzstyle{dkpointconj}=[kpoint conjugate,doubled]
\tikzstyle{dkpointtrans}=[kpoint transpose,doubled]
\tikzstyle{kscalar}=[kpoint common, shape=EBox, inner xsep=-1pt, inner ysep=3pt,font=\small]
\tikzstyle{kscalarconj}=[kpoint common, shape=WBox, inner xsep=-1pt, inner ysep=3pt,font=\small]
\tikzstyle{spekpoint}=[kpoint sc,minimum height=5mm,inner sep=3pt]
\tikzstyle{spekcopoint}=[kpoint adjoint sc,minimum height=5mm,inner sep=3pt]
\tikzstyle{dspekpoint}=[spekpoint,doubled]
\tikzstyle{dspekcopoint}=[spekcopoint,doubled]
 \tikzstyle{upgroundsmall}=[circuit ee IEC,thick,ground,rotate=90,scale=1.25,xshift=-1mm]
 \tikzstyle{upground}=[circuit ee IEC,thick,ground,rotate=90,scale=2.5]
 \tikzstyle{downground}=[circuit ee IEC,thick,ground,rotate=-90,scale=2.5]
 \tikzstyle{bigground}=[regular polygon,regular polygon sides=3,draw=gray,scale=0.50,inner sep=-0.5pt,minimum width=10mm,fill=gray]
\tikzstyle{arrs}=[-latex,font=\small,auto]
\tikzstyle{arrow plain}=[arrs]
\tikzstyle{arrow dashed}=[dashed,arrs]
\tikzstyle{arrow bold}=[very thick,arrs]
\tikzstyle{arrow hide}=[draw=white!0,-]
\tikzstyle{arrow reverse}=[latex-]
\tikzstyle{cdnode}=[]
\let\olddagger\dagger
\renewcommand{\dagger}{\ensuremath{\olddagger}\xspace}
\theoremstyle{definition}
\newtheorem{theorem}{Theorem}[section]
\newtheorem{lemma}[theorem]{Lemma}
\newtheorem{definition}[theorem]{Definition}
\newtheorem{example}[theorem]{Example}
\newcommand{\TODO}[1]{\marginpar{\scriptsize\bB \textbf{TODO:} #1\e}}
\newcommand{\TODOa}[1]{\marginpar{\scriptsize\bM \textbf{TODO:} #1\e}}
\newcommand{\TODOb}[1]{\marginpar{\scriptsize\bB \textbf{TODO:} #1\e}}
\newcommand{\COMMa}[1]{\marginpar{\scriptsize\bM \textbf{COMM:} #1\e}}
\newcommand{\COMMb}[1]{\marginpar{\scriptsize\bB \textbf{COMM:} #1\e}}
\newcommand{\CHECK}[1]{\marginpar{\scriptsize\bR \textbf{CHECK:} #1\e}}
\def\bR{\begin{color}{red}}
\def\bB{\begin{color}{blue}}
\def\bM{\begin{color}{magenta}}
\def\bC{\begin{color}{cyan}}
\def\bW{\begin{color}{white}}
\def\bBl{\begin{color}{black}}
\def\bG{\begin{color}{green}}
\def\bY{\begin{color}{yellow}}
\def\e{\end{color}\xspace}
\newcommand{\bit}{\setlist{nolistsep}\begin{itemize}[noitemsep]}
\newcommand{\eit}{\end{itemize}\par\noindent}
\newcommand{\ben}{\setlist{nolistsep}
\begin{enumerate}[noitemsep]
}
\newcommand{\een}{\end{enumerate}\par\noindent}
\newcommand{\beq}{\begin{equation}}
\newcommand{\eeq}{\end{equation}\par\noindent}
\newcommand{\beqa}{\begin{eqnarray*}}
\newcommand{\eeqa}{\end{eqnarray*}\par\noindent}
\newcommand{\beqn}{\begin{eqnarray}}
\newcommand{\eeqn}{\end{eqnarray}\par\noindent}
\renewcommand{\TODO}[1]{}
\renewcommand{\TODOa}[1]{}
\renewcommand{\TODOb}[1]{}
\renewcommand{\COMMa}[1]{}
\renewcommand{\COMMb}[1]{}
\renewcommand{\CHECK}[1]{}
\def\bR{\begin{color}{black}}
\def\bB{\begin{color}{black}}
\def\bM{\begin{color}{black}}
\def\bC{\begin{color}{black}}
\def\bW{\begin{color}{black}}
\def\bG{\begin{color}{black}}
\def\bY{\begin{color}{black}}
\newcommand{\gro}{
    \begin{tikzpicture}[scale=1, baseline=-1mm, thick]
    \draw (0,0) -- +(4.0mm,0mm) {
        +(4mm,-2.5mm) -- +(4mm,2.5mm)
                +(5mm,-1.5mm) -- +(5mm,1.5mm)
                +(6mm,-0.5mm) -- +(6mm,0.5mm)};
        \end{tikzpicture}}
\newcommand{\groy}{
    \begin{tikzpicture}[scale=1, baseline=0mm, thick]
    \draw (0,0) -- +(0mm,4mm) {
        +(-2.5mm,4mm) -- +(2.5mm,4mm)
                +(-1.5mm,5mm) -- +(1.5mm,5mm)
                +(-0.5mm,6mm) -- +(0.5mm,6mm)};
        \end{tikzpicture}}
\begin{document}

\title{Generalised Process Theories}
\date{\today}

\author{John H. Selby\textsuperscript{*}}
\affiliation{International Centre for Theory of Quantum Technologies, University of Gda\'nsk, 80-309 Gda\'nsk, Poland}
\affiliation{Theoretical Sciences Visiting Program, Okinawa Institute of Science and Technology Graduate University, Onna, 904-0495, Japan}
\author{Maria E. Stasinou\textsuperscript{*}}
\affiliation{Deutsches Elektronen-Synchrotron DESY, Germany}
\author{Matt Wilson}
\affiliation{Universit\'e Paris-Saclay, CNRS, CentraleSup\'elec, ENS Paris-Saclay, Inria, Laboratoire M\'ethodes Formelles, 91190, Gif-sur-Yvette, France}
\author{Bob Coecke}
\affiliation{Quantinuum, 17 Beaumont street, Oxford, OX1 2NA, United Kingdom}

\maketitle

\begin{center}
   \textsuperscript{*}{\textbf{\textcolor{black!85}{The first two authors contributed equally to this work and are listed in alphabetical order.}}}
\end{center}

\begin{abstract}
Process theories have been widely applied, from the foundations of physics through to computational linguistics. They are conceptually based on the idea that we can describe all of these things in terms of \emph{systems} which interact and evolve via \emph{processes}, and that we can explore their behaviour  by considering how these systems and processes \emph{compose}. On a formal level, they are based on the mathematics of symmetric monoidal categories which is known to be broadly applicable within many branches of mathematics. 
There are, however, situations where the formal notion of a process theory does not seem applicable, but where the conceptual idea of a process theory still does. That is, where we still have systems, processes, and their composition, but they do not neatly fit the mould of a symmetric monoidal category. 

In this paper we discuss whether the conceptual idea of a process theory might in fact be better formally
understood in terms of operad algebras. In particular, this view hinges on the works of Ref.~\cite{patterson2021wiring,yau2018operads} in which the authors demonstrate a close connection between symmetric monoidal categories (along with some of their variants) and certain kinds of operad algebras. We build on these previous works by identifying an adaptation needed to recover those symmetric monoidal categories which are causal, and demonstrate the utility of the operad algebra perspective by showing how it can be easily adapted to subsume alternative kinds of process theories such as those which are time-neutral, enriched, or higher-order.

We present all of this using a convenient string-diagrammatic notation, such that this paper should be readily accessible to anyone who has some familiarity with string diagrams in other contexts. That is, we use string diagrams both for the new kinds of process theories that we introduce, as well as for the operads and operad algebras underpinning them.
 \end{abstract}

\tableofcontents

\section{Introduction} 

Process theories are an extremely versatile formalism for describing many situations of interest in many fields of research. They are comprised of a collection of systems and the processes that act on them. A few examples are:

\begin{center}
\begin{tabular}{c|c|c}
\textbf{Research field} & \textbf{Systems} & \textbf{Processes} \\ \hline
Physics & Physical systems & Physical transformations \\
Chemistry & Molecules & Chemical reactions \\
Mathematics & Groups & Group homomorphisms \\
Computer science & Data & Programs \\
Linguistics & Grammatical types & Words \\
... & ... & ... 
\end{tabular}
\end{center}

Whilst the situations of interest in the above table are extremely diverse, they nonetheless share common features. For example, if we have two systems then we can consider them together as a composite system, and if we can process one system into a second, and then the second into a third, then we can consider this as describing an overall process going from the first to the third. It is precisely these common features which are formalised by the definition of a process theory, which here we will call a \emph{traditional} process theory to disambiguate from the other sorts which we will introduce shortly.

\begin{definition}[Traditional process theories]\label{sec:PT}
A traditional process theory is comprised of a collection of \emph{processes}. For example,
\beq\tikzfig{figures/process},\eeq
is a process with \emph{input systems} $A$ and $B$ and \emph{output systems} $A$, $C$, and $C$. This collection of processes must, moreover, be closed under \emph{wirings}. For example, the diagram:
\beq\tikzfig{figures/diagram},\eeq
corresponds to another process in the theory, in this case one with inputs $D$ and $A$ and outputs $A$, $E$, and $C$.
This wiring is subject to the constraints that:
\bit
\item[--] outputs cannot be wired to outputs, and inputs cannot be wired to inputs;
\item[--] the wiring is acyclic;
\item[--] system types match.
\eit
Moreover, we have a notion of diagrammatic equality -- two diagrams are equal if they represent the same wiring, for example,
\beq\label{eq:equalDiag}\tikzfig{figures/diagram}\qquad=\qquad\tikzfig{figures/diagramRedrawn}.\eeq
In other words, the precise layout of processes on the page is not important, all that matters is their \emph{connectivity}.
\end{definition}

We've argued that the above definition captures an extremely broad range of theories of interest. Indeed, in the context of the foundations of physics it is known that process theories subsume many other frameworks for describing hypothetical theories such as generalised probabilistic theories \cite{HardyAxiom,Barrett}, operational probabilistic theories \cite{Chiri1,d2017quantum}, epistemically restricted theories \cite{SpekToy,spekkens2016quasi}, ontological theories \cite{harrigan1,schmid2020structure}, and so on. One may therefore be surprised by the ``generalised'' in the title. Why would we need to go to anything even more general than what we have already?

Well, it turns out that in recent years there have been several kinds of theories which have been considered which don't naturally lend themselves to being viewed as process theories. This is not to say that they cannot be shoehorned into the formalism, but that by doing so one would necessarily lose something of the essence of what the theory is about.

\

Let us illustrate this with a simple motivating example.
Suppose that we want to make a fairly subtle modification to our notion of a process theory, such that processes no longer have an input-output distinction. The study of such theories is motivated by the work of  Refs.~\cite{oreshkov2016operational,timesymmetry} which defines a time-neutral version of quantum theory. We will therefore call such process theories time-neutral.
\begin{definition}[Time-neutral process theories]
 A time-neutral theory is comprised of a collection of processes, such as
\beq
\tikzfig{figures/timeNeutralProcess}
\eeq
which is closed under wirings. For example,
\beq
\tikzfig{figures/timeNeutralDiagramNewNotation}
\eeq
corresponds to anther process in the theory. Moreover, two diagrams are equal if they have the same connectivity.
\end{definition}

One can indeed shoehorn this into the mould of a traditional process theory, namely one which has special kinds of bipartite states and effects known as cups and caps \cite{Kindergarten,CKbook}. Indeed, we will show the formal relationship between these in Sec.~\ref{sec:TimeNeutral}. Cups and caps correspond to particular processes which exist in certain process theories and can be conveniently represented as bendings of wires. Specifically, a \emph{cup}, $\bigcup$, and a \emph{cap}, $\bigcap$ can be represented as:
\beq\tikzfig{figures/cupProcess}\ \ =:\ \ \tikzfig{figures/cupWire} \quad\qquad \& \qquad \quad \tikzfig{figures/capProcess}\ \ =:\ \ \tikzfig{figures/capWire}\ .\eeq
For this representation to make sense, that is, such that we can maintain our rule that only the connectivity of the diagram matters, these must satisfy certain equations, namely:
\beq\label{eq:snake}\tikzfig{figures/snake}\quad\qquad\&\quad\qquad\tikzfig{figures/cupSymmetry}\ .\eeq
Essentially, these cups and caps let us turn an input into an output and vice versa, thereby blurring the distinction between inputs and outputs. For example, diagrams such as
\beq\tikzfig{figures/timeNeutralDiagram},\eeq
are now legitimate diagrams even though they have an apparent cyclicity. 

Intuitively at least, one can therefore see that there is a connection between traditional process theories with cups and caps and the time-neutral process theories that we have already introduced.
A formalisation of this intuitive connection (as is provided Sec.~\ref{sec:TimeNeutral}) means that we don't strictly require going beyond traditional process theories in order to capture time-neutrality. 
This perspective, however, is rather unnatural and certainly loses something of the essence of a time-neutral theory -- it first requires that we introduce an artificial partitioning of systems into inputs and outputs, and then requires us to add in the cups and caps in order to effectively remove the distinction again! 

\

Similar issues can be seen to arise when trying to formulate theories of higher-order processes, i.e., processes with holes into which we can wire in other processes, drawn as:
\beq
\tikzfig{new_supermap_1}.
\eeq
Such generalised processes have been widely studied in the recent literature \cite{chiribella_architecture, chiribella_networks, Chiribella_2008_supermaps, chiribella_causal} for the purpose of studying quantum causal structures. The study of the compositionally of higher order processes has significantly stretched the standard process-theoretic framework in a number of independent directions \cite{kissinger2017categorical, bisio2019theoretical, simmons_bv_logic, wilson_local, wilson_polycat, hefford_coend, earnshaw_produoidal, vanreitvelde_consistent_circuit, boisseau2022cornering}. The first instance in the literature is the development of higher-order causal categories \cite{kissinger2017categorical, simmons_bv_logic}, and higher order quantum theory \cite{bisio2019theoretical, Hoffreumon_projective} where there are at least two distinct notions of tensor product (i.e., putting processes side-by-side) for higher order processes \cite{kissinger2017categorical, simmons_bv_logic, earnshaw_produoidal}. It has been argued that a key conceptual road block to treating higher-order processes as forming standard process theories is the existence of restrictions on the ways in which they can be plugged together without the formation of pathological time loops \cite{wilson_polycat} such as: \beq
\tikzfig{time_loop}.
\eeq 
This restriction means that in some contexts it is more appropriate to model a fragment of the compositionality of higher-order processes using the notion of a \textit{single wire process theory}, referred to in the categorical literature as a (planar) polycategory \cite{szabo_polycats}.

Even further, there are more fine-grained ways in which higher order processes can be composed which are more naturally treated in terms of contractions between wires of a single globular process \cite{apadula_nosig}. In a related strand of work, higher-order processes have recently been connected to the causal box framework \cite{Portmann_2017_causal_box, venkatesh_cyclic_spacetime,salzger2024mapping}, where quantum processes follow composition rules based on contracting wires within globular boxes, ensuring that such contractions are permissible within the given spacetime structure and do not lead to causal loops.

\

Traditional process theories are commonly formulated using the mathematics of category theory \cite{MacLane,coecke2010categories} and in particular, as symmetric monoidal categories \cite{MacLaneCoherence,coecke2010categories} as we briefly review in Sec.~\ref{sec:SMCs}. In this paper we argue, however, that this is not the best mathematical language to capture generalised process theories, and that, following Refs.~\cite{spivak2013operad,patterson2021wiring}, a formulation in terms of (a minor variant of) operad algebras captures them more faithfully. In the context of traditional process theories the distinction is somewhat inconsequential. However, when it comes to generalisations (such as to the aforementioned time-neutral or higher-order process theories) the operadic formalism turns out to be a much more versatile framework.

A noteworthy point on the classes of physical theories which stretch our intuitions away from the traditional process theoretic framework, is that they are typically concerned with stretching our conception of temporal or causal structure. The notion of a prioritised direction of flow of time is typically represented in the traditional process theoretic viewpoint using causal categories \cite{CRCaucat}\footnote{Also referred to in the categorical literature as affine monoidal categories.}. These provide an abstraction of trace-preservation, where discarding the output of a process is considered equivalent to discarding its input: \beq\tikzfig{figures/check_causal_1} \quad = \quad \tikzfig{figures/check_causal_2} .\eeq  Consequently, to complete the picture and have a fully working framework for generalised processes theories for physicists, we consider how to construct causal categories as operad algebras. To do so, we are led to working with a slightly generalised notion of \textit{operads with empty space}. Standard operads and their algebras can always be re-interpreted in this setting, which additionally allows us to recover the representation of causality through a discarding effect using appropriate algebras. This amendment can be understood purely graphically, and so provides a stable practical framework for physicists who are unfamiliar with category theory. 

\

To conclude then, we will motivate the generalised process theory framework by identifying the following correspondences:
\begin{center}
\begin{tabular}{|c|c|c|}
\hline
\textbf{Process Theoretic Concept} & \textbf{Formalisation as algebras for:}  \\ \hline
 &  \\ 
Traditional Process Theories &  \tikzfig{figures/operadexample_poly1}  \\ 
 &  \\ 
Time-Neutral Process Theories &  \tikzfig{figures/intro_tn_new}   \\
 &  \\ 
Single-Wire Process Theories &  \tikzfig{figures/operadexample_poly2} \\
 &  \\ 
Causal Process Theories &  \tikzfig{wiringTest3}  \\
 &  \\ 
 \hline
\end{tabular}
\end{center}
Moreover, the natural kinds of maps that one would want to consider between these different kinds of process theories, namely, those that preserve the wirings, are captured by natural transformations between the algebras.

Potential future directions include developing a more refined understanding of the compositional structure of higher-order processes, establishing a formal framework for causal boxes \cite{Portmann_2017_causal_box} and related theories grounded primarily in contractions \cite{apadula_nosig}, and exploring both resources within broader compositional settings, as well as investigating compositionality itself as a resource.. The examples that we consider in this work are primarily motivated by problems of quantum theory. However, the tools that we develop  should also be much more broadly applicable. For instance, the limitations of traditional process theories have also become apparent in the field of natural language processing \cite{wang2023distilling}, and it is likely that the more general formalism that we present here will also be useful there.

More broadly though, the success of the process theory framework can be understood as a consequence of having at the right moment in time, formalised a space of possible theories not too broad as to be unwieldy but not so restrictive as to impede exploration. As these examples show, at least in quantum foundations, we have started to hit the boundaries of this space, and so, it is the right time for an expansion. Indeed, one of the key objectives of this paper is to develop graphical techniques for reasoning about relevant classes of operad algebras so as to make them more ``user-friendly''.

\subsection{Traditional process theories as symmetric monoidal categories}
\label{sec:SMCs}

Traditional  process theories are often considered  a convenient way to describe strict symmetric monoidal categories (SMC). Let us briefly recap how this connection is made. Given some process theory, $\mathcal{P}$, we can define an associated SMC, $\mathcal{C}_{\mathcal{P}}$, as follows. First we equip our process theory with certain extra systems and processes. We introduce a fictitious \emph{trivial system}, $I$, which is the input to a process with no input and an output to a system with no output: 
 \beq
 \tikzfig{figures/trivSystem2} \ \ = \ \ \tikzfig{figures/trivSystem1} \qquad,\qquad  \tikzfig{figures/trivSystem4} \ \ = \ \ \tikzfig{figures/trivSystem3}.
 \eeq
 Moreover, we interpret bits of the wirings as processes in their own right, for example, a single wire is taken as an \emph{identity process} and a crossed wire is taken as a \emph{swap process}:
 \beq
 \tikzfig{figures/iden1} \ \ = \ \ \tikzfig{figures/iden2} \qquad , \qquad \tikzfig{figures/swap1} \ \ = \ \ \tikzfig{figures/swap2}
 \eeq
Having added on these extra bits to the process theory we are then in a position that we can define the associated SMC. We take the objects in $\mathcal{C}_{\mathcal{P}}$ to be the systems in $\mathcal{P}$ together with the trivial system $I$. We define the monoidal product, $\otimes$, for objects via:
\beq
A\otimes B := \tikzfig{figures/compSys}
\eeq
 Hence, the trivial system will be the monoidal unit as diagrammatically we have:
\beq
\tikzfig{figures/monUnit1} \ \ = \ \ \tikzfig{figures/monUnit2} \ \ = \ \ \tikzfig{figures/monUnit3}.
\eeq

 The morphisms in $\mathcal{C}_{\mathcal{P}}$ are given by the processes in $\mathcal{P}$ together with the identity and swap processes which are taken to be the identity and swap morphisms respectively. We define composition of morphisms, $\circ$, and the monoidal product of morphisms, $\otimes$, via:
 \beq
 g\circ f :=  \tikzfig{seqComp1} \qquad \text{and} \qquad f\otimes h := \tikzfig{parComp1} \qquad \text{respectively.}
 \eeq
 Verifying that this does indeed define a valid SMC is then relatively straightforward. For example, associativity of $\circ$ is immediate from the definition:
\beq
h\circ (g \circ f) = \tikzfig{assoc} = (h\circ g) \circ f.
\eeq
 
The reason that process theories and SMCs are then often conflated is that anything we can do within the process theory $\mathcal{P}$ can equally well be done within the SMC $\mathcal{C_P}$. In particular, any diagram in the process theory can be built up out of the categorical operations, for example:
\beq\label{eq:foliate}
\tikzfig{foliatedDiag1}.
\eeq
The axioms of the SMC ensure that any two equal diagrams (such as shown in eq.~\eqref{eq:equalDiag}) correspond to the same morphism within the SMC.

This categorical perspective on process theories has proved to be extremely useful, as it provides a formal connection between process theories and a well established branch of mathematical research, allowing one to use all of the examples, results and concepts therein. However, as mentioned before, this view is not particularly natural:
\bit
\item[--] Firstly, we had to introduce a fictitious trivial system to our process theory to act as the monoidal unit for the SMC. This is unnatural, however, in that there are sensible process theories in which we have no need for the trivial system. An example is the process theory of unitary transformations on sets of qubits, which is relevant for the study of the circuit model of quantum computation. 
\item[--]  Secondly, we had to turn the wirings into processes to act as the identity and swap morphisms within the SMC. This is also unnatural in the sense that we can think of a process theory in which it does not make sense to consider these as processes (but merely as how processes are composed). An example is the process theory of noisy quantum operations, which is relevant for the study of the quantum processes that can actually be implemented in the lab.
\item[--] Thirdly, in order to convert a diagram in a process theory into composition in terms of the basic categorical operations (as in eq.~\eqref{eq:foliate}), we introduced a foliation of the diagram (the horizontal dashed lines). However, there is no unique way to introduce this (which is closely related to the lack of a notion of simultaneity in relativity). There is therefore a need to introduce equations which relate the various categorical operations in order to make sure that all of the equivalent ways of expressing a diagram in terms of these operations are equal to one another. The privileging of certain types of composition over others, which is somewhat akin to picking a basis for a vector space, it may be useful but it can obfuscate the underlying structure\footnote{This issue was first pointed out to the authors by Lucien Hardy.}.
\eit
In practice, however, none of these issues actually serves as a great impediment to viewing traditional process theories as SMCs. For example, in the SMC corresponding to unitary operations on qubits we would gain a trivial object that we would never use, but its existence isn't really a problem for us. Similarly, adding in identities and swaps as processes rather than just as composition never seems to lead to any problems in practice. The greater issue in reality is the inflexibility of the categorical formalism to capture theories with different notions of composition as we discussed above. 

To elaborate however, the correspondence between process theories and SMCs  no longer holds for time-neutral process theories. Integral to the notion of a morphism in a category, is the specification of its domain and codomain.  However, in a time neutral process theory there is no meaningful way to divide up the systems associated to a process into inputs and outputs, and thus no meaningful way to specify the domain and codomain of the morphism that we would associate to the process. One can get around this -- to some extent -- by working with compact closed SMCs, that is, the sorts of SMCs that we get from process theories with cups and caps. But, as we discussed already, this is not a particularly natural formulation of this kind of theory.

The purpose of this article is to show that there is an alternative way to connect process theories to well studied mathematics, which avoids all of the aforementioned problems. The crux of this is the recent work of Patterson et al.,  Ref.~\cite{patterson2021wiring}, in which the authors show an equivalence between symmetric monoidal categories and the algebras of the \emph{acyclic wiring operad}. This equivalence, together with the relationship we described above between process theories and SMCs, immediately means that we can also view process theories as certain kinds of operad algebras. 

\subsection{Operadic preliminaries}\label{sec:Operads}

In this section we provide a brief introduction to the operadic language by means of an intuitive graphical representation. We then discuss how process theories can be re-expressed in this language and showcase the advantages of this perspective.

Operads, much like categories, consist of a collection of objects, a collection of operations and a means of operad composition obeying the relevant associativity and identity laws. They differ from categories in the sense that their operations can have multiple or no inputs instead of a single input (as in the case of categorical morphisms).
\begin{definition}
An operad $
\cO$ consists of a collection of objects, where an object $t$ is denoted by,
\beq
\tikzfig{figures/operadid},
\eeq
 and a set of operations with some number of objects as input and a single object as an output. We denote an operation $f$ from objects $t_1,...,t_n$ to $t$ as,
\beq
\tikzfig{figures/operadoperation}.
\eeq
These can be composed provided that types match. For example an operation $f'$ with output $t_i$ can be connected to the $i$th input of $f$ as follows:
\beq\label{eq:opComp}
\tikzfig{figures/operadcomp}.
\eeq
This composition is associative and has a unit operation given by
\beq
\tikzfig{figures/operadid2},
\eeq
which ensures that our diagrammatic representation is consistent.
\end{definition}

As inputs of operations are taken to be lists of objects.  In particular,  the empty list can be treated as an input. Diagrammatically we will draw these as operations without an input:
\beq
\tikzfig{figures/operadState_2}.
\eeq

For the purposes of this work, we will follow the interpretation of operads of Ref.~\cite{fong2018seven} as being abstract theories of composition. That is, the operations are thought of as describing different ways to compose a bunch of small things into a bigger thing. Note that in general there is a set of different operations with input $t_1,...,t_n$  and output $t$. If this is a singleton set then this means that there is a unique way to compose $t_1,...,t_n$ to make $t$, whilst if it is the empty set then there is no way to do so, and otherwise there are various different ways in which they can be composed. From this perspective, operation composition is very natural. For example, in Eq.~\ref{eq:opComp} we see that we first use $f'$ to combine $g_1,...,g_m$ to make $t_i$ and then use $f$ to combine the $t_i$ that was created together with $t_1,...,t_{i-1},t_{i+1},...,t_n$ to make some $t$. 

Let us now illustrate this abstract idea with some more concrete examples.
\begin{example}[Space-time operad]
Suppose we have some space-time manifold $\mathcal{M}$. Then we can define an operad where the objects correspond to subsets $M_i\subseteq \mathcal{M}$. For every $M_1,...,M_n$ there is a single operad operation which takes $M_1,...,M_n$ to the union of these subsets, $\bigcup_{i=1}^n M_i$. That is, we can think of this operad as describing the sticking together of spacetime regions. 
\end{example}

\begin{example}[Operads from SMCs]
An example of operads relevant to this work is the class of operads $\cO_\mathcal{C}$ that follow from symmetric monoidal categories $\mathcal{C}$. In particular, any symmetric monoidal category $\mathcal{C}$ defines an operad  $\cO_\mathcal{C}$ by  restricting $\mathcal{C}$ to having only morphisms with a single output. That said, morphisms $C_1\otimes C_2\otimes...\otimes C_n\to D$ in $\mathcal{C}$ can be viewed as operad operations from $C_1,...,C_n$ to $D$. Representative examples are the operad $\textsc{set}$ which has sets as objects and functions from the cartesian product of the input sets to the output set as operad operations  as well as the operad $\textsc{vect}_{K}$ which has vector spaces as objects and linear maps from the tensor product of the input spaces to the output space as operations.
\end{example}

Whilst we can capture abstract theories of composition using operads, we will typically be more interested in concrete instantiations of this abstract theory. In order to describe these we first introduce operad functors:
\begin{definition} An operad functor \colorbox{blue!10}{$F:\cO\to \cO'$} maps the objects $t\in \cO$ to the objects  $F(t)\in \cO'$ and the operad operations $o \in \cO$ with inputs $t_1,...,t_n$ and output $t$ to an operation $F(o)\in \cO'$ with inputs $F(t_1),...,F(t_n)$ and output $F(t)$, such that composition and identities are preserved. We diagrammatically denote these morphisms by shaded regions:
\beq
\tikzfig{figures/operadFunctor}
\eeq
 The condition that the functor preserves the operadic composition translates diagramatically as:
\beq
\tikzfig{figures/operadFunctor1}\quad = \quad
\tikzfig{figures/operadFunctor2}.
\eeq
The preservation of identity operations means that:
\beq
\tikzfig{opAlg1} \ \ = \ \ \tikzfig{opAlg2}.
\eeq
\end{definition}

For us the most important class of operad functors are operad algebras:
\begin{definition} An operad algebra for some operad $\cO$ is an operad functor \colorbox{blue!10}{$F:\cO\to \textsc{set}$}.
\end{definition}

Conceptually, again following Ref.~\cite{fong2018seven}, the functor $F$ provides a concrete instantiation of the abstract notion of composition provided by $\cO$. That is, to each object $t\in \cO$ there is some associated set $F(t)$, which we think of as describing the set of possible ways that $t$ can be. 
 Then, to each operad operation $f$ telling us how $t_1,...,t_n$ are to be combined to make $t$, $F$ assigns a function $F(f):F(t_1)\times...\times F(t_n) \to F(t)$. This function indicates for each way that $t_1,..,t_n$ can be how the output $t$ will be. Diagrammatically, we represent the elements of the set $F(t)$ (i.e. the possible ways that $t$ can be) as:
\beq
\tikzfig{operadState},
\eeq
which we think of as the `state' of $t$.
Then, we represent the action of the function on these `states' via
 \beq
 \tikzfig{actionOpAlg} \ \ = \ \ \tikzfig{operadState}
 \eeq
 where $s = F(f)(s_1\times \cdots \times s_n)$.
Functoriality of $F$ ensures that this is a sensible interpretation. For example, in the case of the ``do nothing'' operation we have that:
\beq
\tikzfig{opAlg1} \ \ = \ \ \tikzfig{opAlg2}.
\eeq
Hence, doing nothing doesn't change the state of the object just as we would expect!

\begin{example}[Fields on space-time]
In this case $F(M)$ are the possible field configurations on the space $M
\subseteq \mathcal{M}$ plus an extra ``fail'' configuration, that is $F(M):=\mathsf{FieldConfig}\big|_M \sqcup \mathtt{fail}$. 
Then $F(\cup): F(M)\times F(M') \to F(M\cup M')$ is a function that glues the state of the field together provided that they are compatible, and otherwise returns $\mathtt{fail}$.
\end{example}

Viewing operad algebras as concrete instantiations of abstract theories of composition will be central to this work. In particular, this perspective places process theories under a new light since they too will be viewed as concrete instantiations of an abstract theory of how boxes can be wired together. More specifically, the abstract theory  will have the form of a particular kind of operad, a `wiring operad', which when varying its precise definition will lead to different kinds of operad algebras and consequently different kinds of process theories.

\section{Traditional process theories}

We begin by defining the acyclic wiring operad \cite{patterson2021wiring} which captures the notion of composition of the traditional process theories that we defined in Def.~\ref{sec:PT}. 

\begin{definition}
An acyclic wiring operad $\cW^A$ consists of:
\begin{itemize}
\item  Boxes as objects. For example:
\beq
\tikzfig{wiringObj}.
\eeq
\item Acyclic wirings of boxes as operations.  For example:
\beq
\tikzfig{figures/WiringTest1}.
\eeq
takes four boxes as input and wires them together to form a new box as an output.
\item The identity operation for every box is the trivial wiring. For instance, the identity for a box $\scalebox{0.6}{\tikzfig{wboxesAB}}$ is given by:
\beq
\tikzfig{figures/identitybox}\quad =\quad \tikzfig{figures/wiringIdent}
\eeq
\item Operation composition is given by diagram substitution.
For instance, 
\beq
\tikzfig{figures/operadexample}
\eeq
in which the wiring diagram on the left is substituted into the bottom box of the wiring diagram on the right. In this way diagram substitution provides us with a more detailed wiring diagram than the one we had in the beginning.
\end{itemize}
\end{definition}

The acyclic wiring operad $\cW^A$ allows us to study properties of wirings of abstract boxes. However, typically we are not interested in abstract boxes, but an actual realization of those. In particular, we are concerned with boxes that describe physical processes. The corresponding algebras \colorbox{blue!10}{$F: \cW^A\rightarrow \textsc{set}$}, implement exactly that: They turn the abstract boxes and wirings of $\cW^A$ into actual processes and their composition.  More specifically, it was shown in Ref.~\cite{patterson2021wiring} that there is an equivalence between operad algebras for $\cW^A$ and symmetric monoidal categories. In the following we provide an intuition for this result, aided by the diagrammatic notation that we have set up.

Before getting to the technical details, let us try to give some intuition for this result. Suppose we have a diagram in a process theory, $\mathcal{P}$: 
\beq \tikzfig{operadexamplediagram}. \eeq
Then as on operad algebra $F_\mathcal{P}$ for $\cW^A$ we would instead represent this as:
\beq\tikzfig{operadexamplealgebra}, \eeq
that is, rather than directly labelling the boxes we label them via the state of objects in the codomain of the operad algebra. The set of possible states of the boxes therefore corresponds to the set of possible processes in the theory.

In order to formalise this connection between operad algebras and process theories, it is useful to mention a few particular wiring operations. Specifically, sequential, parallel, identity, and swap wirings, which are diagrammatically denoted as:
\beq
\tikzfig{wcomp}\quad\text{,}\qquad \tikzfig{parcomp}\quad \text{,} \qquad
\tikzfig{figures/unitA}\quad \text{, and}\qquad  \tikzfig{sym}\quad \text{respectively.}
\eeq
These last two are interesting as they are operad operations which have no input box and only an output box. By considering the action of \colorbox{blue!10}{$F: \cW^A\rightarrow \textsc{set}$} on boxes, and on these four particular wirings in particular, we will see that it encodes all of the data constituting a symmetric monoidal category, $\mathcal{C}_F$.

On boxes $\scalebox{0.6}{\tikzfig{wboxesAB}}$\hspace{-1.5mm}, $F$ assigns a set $ F\left(\scalebox{0.6}{\tikzfig{wboxesAB}}\hspace{-1.5mm}\right)$ which we will take to be the homset of $\mathcal{C}_F$ with domain $A$ and codomain $B$. That is,
\begin{equation}
\mathcal{C}_F(A, B) := F\left(\scalebox{0.6}{\tikzfig{wboxesAB}}\hspace{-1.5mm}\right).
\end{equation}
This means that the morphisms in $\mathcal{C}_F$, that  process theoretically are denoted by
\[
\tikzfig{processAB},
\]
correspond to the elements of $F\left(\scalebox{0.6}{\tikzfig{wboxesAB}}\hspace{-1.5mm}\right)$ that have the following operadic representation:
\[
\tikzfig{morph1} \ \ = \ \ \tikzfig{morph2} \ \ =\ \  \tikzfig{morphRep} \ .
\]
Similarly, we will take the homsets with composite inputs and outputs to be given by $F$ applied to boxes with multiple inputs and outputs, for example:
\beq
\mathcal{C}_F(A_1\otimes\cdots\otimes A_n, B_1\otimes\cdots\otimes B_m):= F\left(\scalebox{0.6}{\tikzfig{wboxes}}\right).
\eeq
We will also take homsets to and from the monoidal unit to be given by $F$ applied to boxes without an input and without and output respectively, for example:
\beq
\mathcal{C}_F(I,B\otimes A) := F\left( \scalebox{0.6}{\tikzfig{Op1}}\right).
\eeq

The information about various elements of $\mathcal{C}_F$, can then be extracted as we apply $F$ to suitable wiring diagrams of $\cW^A$.
To obtain identities in $\mathcal{C}_F$ we need to pick out particular morphisms $\mathds{1}_A\in \mathcal{C}_F(A,A)$. We can pick out such a morphism via an operad operation in $\textsc{set}$ going from the singleton set $\star=\{*\}$ to $\mathcal{C}_F(A,A)$ . There is an obvious choice for such an operation which we can construct using the operad algebra $F$, namely:
\beq\label{def:morphident}
\tikzfig{morphIdent} \ \ := \ \ \tikzfig{figures/unitAn}.
\eeq
 Similarly, to obtain the symmetry morphisms we must pick out particular morphisms $\mathds{S}_{AB} \in \mathcal{C}_F(A\otimes B, B\otimes A)$. We can again do this by constructing a suitable operad operation in $\textsc{set}$ via:
\beq\label{def:morphswap}
\tikzfig{morphSwap} \ \ := \ \ \tikzfig{symn}.
\eeq
Next let us turn to sequential composition. In order to define sequential composition in $\mathcal{C}_F$ we must define functions 
\beq
\circ_{A,B,C}: \mathcal{C}_F(A, B)\times \mathcal{C}_F(B, C)\rightarrow \mathcal{C}_F(A, C),
\eeq
which can, be thought of as an operad operation in $\textsc{set}$ and the obvious choice for which is given by:
\beq\label{def:seq}
\tikzfig{wCompn1} := \tikzfig{wCompn}.
\eeq
Similarly, we can define parallel composition in $\mathcal{C}_F$ via functions
\beq
\otimes_{A,A';B,B'}:\mathcal{C}_F(A, B)\times \mathcal{C}_F(A', B')\rightarrow \mathcal{C}_F(A\otimes A', B\otimes B')
\eeq
where the obvious choice for this is:
\beq\label{def:par}
\tikzfig{parCompn1} := \tikzfig{parCompn}.
\eeq

Finally, we need to demonstrate that these definitions actually satisfy all of the axioms of a symmetric monoidal category. We will not provide a complete proof here but instead indicate an illustrative example with the aid of our diagrammatic representation.  Specifically, we show unitality of the identity morphisms, that is the following condition:
\[
\tikzfig{identUnit1} =\ \ \tikzfig{identUnit2}.
\]
The proof of this is straightforward. From the definitions given above, the left hand side is equal to:
\begin{align}
\tikzfig{identUnit3} &=  \tikzfig{identUnit4}\\
&= \tikzfig{identUnit5} \\
&= \tikzfig{identUnit6} \\
&= \tikzfig{identUnit2}
\end{align}
The first equality is given by functoriality of $F$, the second by the definition of composition in $\cW^A$, the third from the definition of the identity in $\cW^A$ and the fourth again from functoriality of $F$. 

The proof technique to show the other conditions is practically identical. It follows from straightforward applications of the  definitions that we have set up,  functoriality of $F$, and composition in the wiring operad.

We have so far established that the algebra \colorbox{blue!10}{$F: \cW^A\rightarrow \textsc{set}$} gives rise to a SMC $\mathcal{C}_F$. The reverse also holds, i.e., given a SMC $\mathcal{C}$,  we can define the algebra $F_\mathcal{C}: \cW^A\rightarrow \textsc{set}$ by defining the action of $F_\mathcal{C}$ on boxes and wiring diagrams of $\cW^A$. In particular,  we define
\beq
F_\mathcal{C}\left(\scalebox{0.6}{\tikzfig{wboxesAB}}\hspace{-1.5mm}\right) := \mathcal{C}(A,B).
\eeq
Defining $F_\mathcal{C}$ on the special cases of sequential, parallel, identity, and swap wirings is also straightforward, that is, we simply turn around the definitions in Eqs.~\eqref{def:morphident}, \eqref{def:morphswap}, \eqref{def:seq}, and \eqref{def:par}. The tricky bit, however, is to define the action of $F_\mathcal{C}$ on arbitrary wirings. The solution to this problem was given in Ref.~\cite{patterson2021wiring}, in which they show that any wiring operation in $\cW^A$ can be decomposed in terms of these four particular wiring operations. Hence, once we know how $F_\mathcal{C}$ is defined for these four, we, in principle at least, know how it is defined for arbitrary wirings. The fact that $\mathcal{C}$ is an SMC then ensures that this is independent of how we decompose the given wiring operation.

\

There is, however, an important subtlety that we have so far glossed over. That is, there is not actually a unique wiring operad but one for each possible choice of labels for the inputs and outputs to boxes. The wiring operad that we get from a particular SMC will therefore be the one in which the box labels correspond to the objects of the SMC. For more details on this see Ref.~\cite{patterson2021wiring}.

\subsection{Discussion}

We have therefore seen that we can equivalently describe a traditional process theory either as an SMC, which has been the conventional perspective to date, or as an operad algebra for the acyclic wiring operad $\cW^A$. We therefore propose the following definition: 
\begin{definition}[Traditional process theories]
A traditional process theory is an acyclic wiring operad algebra, \colorbox{blue!10}{$F: \cW^A\rightarrow \textsc{set}$}. 
\end{definition}
This addresses three of the shortcomings of the categorical approach discussed in the introduction. 
\begin{itemize}
\item[--] We do not need to introduce any fictitious trivial system to the process theory in order to make the connection to the operadic language.
\item[--] We do not need to view the identities and swaps as processes in their own right.
\item[--] We do not privilege sequential an parallel composition over all others.
\end{itemize}
However, arguably these advantages alone are not sufficient to make one want to give up the categorical formalism which is now widely used and understood by the foundations of physics community. The major advantage of the formalism should become clear over the remainder of this paper, namely, that it provides a more flexible description which can be more easily adapted to various situations of interest.

\section{Time-neutral process theories} \label{sec:TimeNeutral}

A time-neutral process theory is one in which there is no separation of the systems attached to a given process into inputs and outputs. The need for such theories can be motivated by fundamental questions, such as the question as to how time emerges from fundamentally timeless theories \cite{timesymmetry}, or by practical problems in which we are modelling static situations, for example, formalising electric circuit diagrams \cite{fong2018seven}. In the latter case, for example, there is no reason to ascribe a particular side of the battery as being an input and the other an output, they simply are places that we can wire other things to.

In order to describe theories which do not have an input/output distinction we must revisit the basics of the definition of the wiring operad. In this section, we demonstrate how to do this by defining a new kind of wiring operad whose associated  operad algebras can be thought of as time neutral process theories. In App.~\ref{app:cupsandcaps} we relate this to the ``standard'' approach of introducing time neutrality via cups and caps.

A time-neutral theory is defined by a collection of processes, such as
\[
\tikzfig{timeNeutralProcess}
\]
which is closed under wirings. For example,
\[
\tikzfig{timeNeutralDiagramNewNotation}
\]
corresponds to another process in the theory. Moreover, two diagrams are equal if they have the same connectivity. Those processes, represented as circles, come with an associated list of systems such that there is no separation of this list into input and output systems -- all systems are on a completely equal footing.

\begin{example}[Tensor networks]
A mathematical example of a time-neutral process theory is the theory of tensors over some field $\mathds{F}$. Wires are labelled by finite index sets, $I$, $J$, ..., and processes are labelled by tensors $T=
\{T_{ij...}\in \mathds{F}|i\in I;j\in J;...\}$. Wiring together these tensors is then simply contraction of the relevant indices. For example if we wire an $I$ system of $T$ to an $I$ system of $S$ we obtain the tensor $\{\sum_{i\in I} T_{...i...}S_{...i...}\}$.
\end{example}

This can therefore be seen as a formalisation of the kinds of diagrams which are used in the study of tensor networks \cite{orus2014practical}. However, in this case there is likely a better formalisation which we discuss in the next subsection.

\begin{example}[ZX calculus (Informal)]
The ZX calculus \cite{van2020zx,coecke2022kindergarden} is typically defined as a traditional process theory with inputs and outputs. However, it is clear from the diagrams which are drawn when actually working with the ZX calculus that there is no distinction between inputs and outputs, and, hence, that it can be viewed as an example of a time-neutral process theory. 
\end{example}


\begin{example}[Time-neutral quantum theory]
We can succinctly represent time-neutral quantum theory \cite{timesymmetry} as a time-neutral process theory by taking systems to be finite-dimensional complex matrix algebras $\mathcal{A}:=\bigoplus_{i\in I_\mathcal{A}} \mathbf{M}_{n_{\mathcal{A}_i}}(\mathds{C})$. Processes, for example with systems $\mathcal{A}$ and $\mathcal{B}$, are non-negative elements of the tensor product algebra, e.g. $\rho \in \mathcal{A}\otimes \mathcal{B}$, considered up to scaling  by non-zero real numbers. Specifically, they are represented  with equivalence classes $[\rho]:=\{r\rho| r \in \mathds{R}^+\}$. Composition is defined by taking a representative element of each equivalence class, tensoring on identities such that they belong to the same algebra, multiplying the matrices, taking the partial trace of the systems which are being composed, and then quotienting to obtain the equivalence class. For example suppose we have $[\rho]$ where $\rho \in \mathcal{A}\otimes \mathcal{B}$ and $[\sigma]$ where $\sigma\in \mathcal{B}\otimes\mathcal{C}$. Then wiring these together by system $\mathcal{B}$ would give $[\mathsf{tr}_\mathcal{B}((\rho\otimes \mathds{1}_\mathcal{C})\cdot (\mathds{1}_\mathcal{A}\otimes \sigma))]\in \mathcal{A}\otimes\mathcal{C}$.
\end{example}

We will now see how to define an operad which faithfully captures the compositional structure of these time-neutral process theories by switching from a wiring operad in which objects are `boxes' to a wiring operad in which objects are `dots'.

\begin{definition}
The wiring operad of dots, $\cW^{D}$, consists of:
\begin{itemize}
\item Dots as objects. For example:
\[
\tikzfig{circle}
\]
where the black point on the LHS of the dot ensures that we still have an order to the attached wires.
\item Wirings of dots as operations. For example:
\[
\tikzfig{WiringTest2}
\]
takes four dots as inputs and wires them together to give a single dot as an output.
\item The identity operation for every dot is the trivial wiring. For instance:
\[
\tikzfig{identitytn}\ \ =\ \ \tikzfig{identWiringtn}. 
\]
\item Operation composition is given by diagram substitution. For instance:
\[
\tikzfig{tnOperComp1}\ \ =\ \ \tikzfig{tnOperComp2}.
\]
in which the wiring diagram on the left is substituted into the bottom dot of the wiring diagram on the right. In this way diagram substitution provides us with a more detailed wiring diagram than the one we had in the beginning.
\end{itemize}
\end{definition}

In analogy with the wiring operad $\cW^A$, there are diagrams in $\cW^{D}$ that remind us of core operations in process theories.
For example we have an analogue of sequential composition:
\[
\tikzfig{seqcomptn},
\]
and an analogue of parallel composition:
\[
\tikzfig{parcomptn},
\]
as well as identity wires:
\[
\tikzfig{unitAtn},
\]
which can also be drawn in other equivalent ways which are reminiscent of cups and caps:
\[
\tikzfig{equalityofcupscaps}.
\]
The above equality constitutes a hint to the connection with compact closed categories as discussed in depth in App.~\ref{app:cupsandcaps}.

We also have swaps:
\[
\tikzfig{symtn},
\]
and also wirings, that simply permute the ordering of the systems attached to a dot:
\[
\tikzfig{permtn}.
\]

A time-neutral theory can be formally defined as follows:
\begin{definition}[Time-neutral process theories]
A time-neutral process theory is an wiring operad of dots algebra,\colorbox{blue!10}{$G: \cW^{D}\rightarrow \textsc{set}$}. 
\end{definition}
 Such an algebra, assigns a set to each dot, which we interpret as the set of time-neutral processes for the systems attached to the dot. On wiring diagrams, $G$ assigns composition functions which specify what the process associated to the composite dot will be given the processes associated to the component dots. Functorialiy of $G$ and the definition of the wiring operad $\cW^{D}$ ensure that all of the composition functions will interact in exactly the way we would expect.

\subsection{Partially time-neutral process theories} 

One could also formulate the idea of a process theory in which some systems were treated in a time-neutral way and others were treated as inputs and outputs. We leave a full formalisation of this idea to future work, but give the basic idea here to highlight the flexibility of the operadic approach.

Such theories could be described by a wiring operad for a three dimensional shape such as:
\beq
\tikzfig{partialTimeNeutral}.
\eeq
where the systems running vertically are treated as inputs and outputs. We demand that wirings respect this by satisfying the acyclicity constraints of a traditional process theory; in contrast, the systems in the horizontal plane are treated time-neutrally so arbitrary wirings are possible. For example, this would allow for diagrams such as:
\beq
\tikzfig{partialTimeNeutralWiring}.
\eeq

Algebras of this operad would then be the best way to understand many diagrams used in the context of tensor networks as some connections between nodes correspond to a space-like separation and should be treated time-neutrally whilst others correspond to time-like separation and hence should satisfy the acyclicity constraint of traditional process theories.

\section{Higher-order Processes}

In modern quantum foundations a key conceptual notion is that of \textit{supermap} \cite{Chiribella_2008_supermaps, chiribella_architecture, chiribella_causal, chiribella_networks}, also referred to in the literature as process-matrices \cite{oreshkov2015operational}. In intuitive terms, theories of supermaps are theories of boxes with holes:
\beq
\tikzfig{new_supermap_1}.
\eeq
These pictures whilst intuitive represent a challenge to the standard process theoretic approach, as argued in \cite{Guerin_2019_nogo}. 
A key issue in the compositionally of supermaps is the difference between interpretation of higher-order inputs and higher-order outputs. Inputs represent separate parties which are forbidden from communicating whereas outputs represent parties which are allowed to communicate. At the compositional level the result of this restriction is that one may give meaning to diagrams such as
\beq
\tikzfig{new_super_allowed}.
\eeq
but one may not give meaning to diagrams such as 
\beq
\tikzfig{new_super_nowaydude}.
\eeq
since such diagrams can be used to form non-causal time loops \cite{Guerin_2019_nogo, wilson_polycat}. 
This can be encoded as a more formal restriction, that one must compose along no-more than a single wire at a time \cite{wilson_polycat, hefford_coend}. For instance, representing 
\beq
\tikzfig{new_supermap_1} \quad \textrm{as} \quad \tikzfig{super_simple}
\eeq
where the symbol $\mathbf{Z}$ is used to represent each pair $(Z,Z')$, meaning can be given to
\beq
\tikzfig{poly_meaning_yes} \quad \textrm{ but not } \quad \tikzfig{poly_meaning_no}  \quad \textrm{ or} \quad \tikzfig{poly_meaning_no_2}.
\eeq
In categorical terms this means that supermaps with non-communicating inputs and communicating outputs form not symmetric monoidal categories but instead symmetric polycategories \cite{szabo_polycats}, with this structure being the prop-like analogue to the linearly distributive structure of finite dimensional supermaps constructed in \cite{kissinger_causal}. 
One might note naturally from the pictures associated to supermaps, that there is \textit{some} reasonable notion of parallel composition available, just not one that can be thought of as neatly giving a monoidal structure to all lists of objects. Indeed, we can give meaning to diagrams such as:
\beq
\tikzfig{super_space_1}  \quad \textrm{as} \quad \tikzfig{super_space_2}.
\eeq

Such theories in which polymorphisms can be composed in parallel we shall think of as \textit{polycategories with space}.

Let us now consider the operads who's algebras give rise to these outlined concepts. 
Note that from any directed graph, we can construct an associated undirected graph, by simply replacing each directed edge with an undirected one. We call the sub-operad of the acyclic wiring operad in which all wiring diagrams have underlying undirected acyclic graphs, the undirected acyclic wiring operad $\cW^{UA}$. 
For bare polycategories without a parallel composition rule one can study the operad $\cW^{UAC}$ of connected undirected acyclic graphs, the algebras of this operad have been previously proven to be symmetric polycategories in \cite{yau2018operads}. 

Let us focus on the general undirected acyclic case. We define \textit{polycategories with space} formally to be algebras of type  $\cW^{UA} \rightarrow \mathbf{Set}$. Whilst process theories can be thought of as extracting the diagrammatic part of monoidal category theory, we have begun here instead with purely process-theoretic concepts and so what could remain to do is to find a categorical characterisation. 
\begin{theorem}[Polycategories with space]
A polycategory with space is a polycategory equipped with an identity-on-objects polyfunctor $\otimes: \mathcal{P}_{\times} \rightarrow \mathcal{P} $ such that $(f \otimes g) \otimes h = f \otimes (g \otimes h)$. 
\end{theorem}
\begin{proof}
Given in the appendix, where $\mathcal{P}_{\times}$, the polycategory of pairs of polymorphisms from $\mathcal{P}$ is also defined. 
\end{proof}
What we have seen so far, is that if supermaps are thought of as lower-order processes theories on higher-order objects, then some of their compositional features are beyond the standard process theory formalism but within the general operadic formalism.
A natural open question however, is whether one can do better than thinking in terms of quite mild adaptations of standard pre-defined categorical structures such as the notion of a polycategory. In particular:
\begin{itemize}
\item It seems plausible that an operad could be constructed in which the objects rather than being boxes are given by these more abstract shapes, boxes with holes. Operations in the operad could then be used to construct all of the expected reasonable compositional laws for such boxes, for instance allowing to form diagrams such as this \beq
\tikzfig{hopeful_diagram} 
\eeq
Algebras of such an operad could then give a satisfactory answer to the question \textit{what is a theory of supermaps?}.
\item It might even be possible in principle to express in terms of generalised process theories the full algorithmic characterisation and type-theoretic characterisations of possible composition rules of higher-order quantum processes \cite{apadula_nosig, kissinger_causal, Hoffreumon_projective, simmons_bv_logic}.
\end{itemize}
These are possibilities left as future directions of research, with applications particularly in the definition of resource theories of higher-order processes.

\section{Causal process theories}
An important key point concerning the wiring diagrams of $\cW^A$ is that they are agnostic to a time direction. Here, we take a step further by imposing a (causal) arrow of time by defining a new kind of wiring operad, namely a \emph{causal wiring operad} $\cW^{\groy}$. Specifically, we ask ourselves how we should modify $\cW^A$, so that the corresponding operad algebra will necessarily give rise to a causal SMC $\mathcal{C}$\footnote{That is, one in which the monoidal unit is terminal}. 
It turns out that we can implement this in two steps. Firstly, we relax the constraints on the wirings considered in $\cW^A$, and secondly we impose conditions that these new wirings must satisfy. We will find that to impose these conditions requires us to interpret wiring operads with a notion of empty space, we explain how they arise formally in the appendix, and put them forward as the right notion for generalised process theories.

To understand the first step, note that in $\cW^A$ every wire must begin on some box which could be an input box or the output box. Similarly every wire must end on some box, which again could be an input box or the output box. In order to incorporate causality we break this symmetry by allowing for wires which do not end on any box, as in the following case:
\beq\label{eq:examplediscardwiring}
\tikzfig{wiringTest3}
\eeq
We use the ground symbol to indicate the termination of a wire that does not end on any box. We think of this as \emph{discarding} the system $B$. 

 We could alternatively represent this by allowing wires to end at an inaccessible ``point at infinity'' via diagrams such as:
\beq
\tikzfig{futureinf}
\eeq
which provides a passive rather than active perspective on what it means to discard a system. For convenience, however, we will work with the active perspective and leave formally proving an equivalence between these perspectives to future work.

The wiring diagrams in $\cW^{\groy}$ can be built up out of the wirings from $\cW^A$, together with a new wiring diagram:
\beq
\tikzfig{figures/operadicdisc2}. 
\eeq
For example, we can construct Eq.~\eqref{eq:examplediscardwiring} via:
\beq
\tikzfig{figures/wiringTest3}\quad =\quad \tikzfig{figures/wiringTest4}. 
\eeq

In order for $\groy_B$ to fully capture the notion of causality, we must impose an extra condition:  if we `discard' the output of a box then we may as well have directly discarded the inputs, and moreover, we don't care what the box actually was. That is, we want to demand that the following equation is satisfied:
\beq\label{eq:operadicCausality}
\tikzfig{discexample}\quad =\quad \tikzfig{discexample1} \quad = \quad \tikzfig{operadicdisccondition}
\eeq
where the operation
\[
\tikzfig{operadicdisc}. 
\]
is a `discard' of the incoming box, meaning that we are simply not interested in what this box is. We call this \emph{operadic discarding}\footnote{The condition \eqref{eq:operadicCausality} manifests the interplay between operadic and physical discarding operations in a way that is analogous to the ignorability condition of Ref.~\cite[Eq. 96]{schmid2020unscrambling}. In that case, the interplay is between ignoring causal and inferential systems. More generally, it states that if we discard the output of an operation, then the nature of the operation is irrelevant and we may as well have directly discarded the input.}.
This must moreover satisfy the condition that if we are not interested in the output box from some wiring, then we should be equivalently disinterested in all of the input boxes. For example:
\beq\label{eq:operadDiscardCond}
\tikzfig{wiringTest5}\quad =\quad \tikzfig{wiringTest6}.
\eeq

An important technical point, is that operads do not carry with them a good notion of operation with no-output. As a result, we must adapt our arena in which we design composition operations. On the other hand, we must not lose the foundational results on process theories and time neutral theories already established. 
In the appendix we explain how to freely add empty-outputs and operadic discarding to any operad\footnote{The resulting algebraic structure, is strictly an affine prop, we keep further discussion of the distinction to the appendix.}, and explain that none of the porevious results are lost by instead using these adapted operads. The only thing that needs to be kept in mind regarding diagrammatics, is that functors now have an additional property: they send operations of type $X \rightarrow ()$ to operations of type $F(X) \rightarrow \star$. In pictures, this means that we can now write down the following kinds of diagrams
\beq
\tikzfig{figures/operadFunctor_empty},
\eeq
with the diagrammatic rule for functorality extending to include such assignments. 
Imposing the equation \ref{eq:operadicCausality} can also be done freely, finishing the construction from wiring operads to \textit{causal wiring operads}. 

\begin{definition}
We call a wiring operad $\cW^A$ extended with a discarding diagram and an operadic discarding operation such that conditions \eqref{eq:operadicCausality} and \eqref{eq:operadDiscardCond} are satisfied,  a causal wiring operad, or in short $\cW^{\groy}$.
\end{definition}

We are now in a position to show that an algebra of a causal wiring operad, \colorbox{blue!10}{$F:\cW^{\groy}\to \textsc{set}$} gives rise to a causal SMC $\mathcal{C}_F$. To begin, we can construct the SMC in exactly the same way as for the wiring operad $\cW^A$.  However, we can now specify additional data for the SMC by considering the action of $F$ on the discarding diagram $\groy_B$. This gives a function $F (\groy_B): \star \rightarrow \mathcal{C}_F(B,I)$ where we interpret the image of the function $F (\groy_B)$ to be the discarding effect for system $B$ in $\mathcal{C}_F$.  Diagrammatically this function is denoted:
\beq 
\tikzfig{figures/opDiscAlg1}.
\eeq
Moreover, the action of $F$ on the operadic discarding, $\gro$, is $F(\gro): \mathcal{C}_F(A,B)\rightarrow \star$. Note that for any set $X$, there is a unique function to the singleton set $\star$. Hence, the action of $F$ on operadic discarding is uniquely fixed. These unique functions act as operadic discarding maps for the operad $\textsc{set}$. We therefore have that
\[
\tikzfig{opDiscAlg2} = \tikzfig{opDiscAlg3}.
\]
Now we prove that the resulting process theory is causal, i.e., if we compose the output of any process with the discarding effect then we obtain the discarding effect on the input. Indeed, for any process $f \in \mathcal{C}_F(A,B)$ we have that
\begin{align}
 \tikzfig{opDiscAlg4}
 &= \tikzfig{opDiscAlg5} \\
 &=  \tikzfig{opDiscAlg6} \\
 &=  \tikzfig{opDiscAlg7} \\
 &=  \tikzfig{opDiscAlg8} \\
 &=   \tikzfig{opDiscAlg9},
\end{align}
for all morphisms $f\in\mathcal{C}(A,B)$.

To sum up, we have extended the wiring operad $\cW^A$ to allow for wirings in which systems do not end on a box, and thus being discarded. We additionally introduced a new type of discarding, operadic discarding, and ensured that it interacts nicely with the discarding diagram. Consequently, the operad $\cW^{\groy}$  gives rise to a causal process theory or SMC $\mathcal{C}$, which is the kind of process theory we typically use to describe physical theories.

\section{Enriched process theories}

In the previous sections we have defined operad algebras as operad functors $F:\mathcal{O}\to \textsc{set}$, where for the various kinds of operads that we have considered we can interpret these algebras as various kinds of process theories. 
In particular, by varying the operad $\mathcal{O}$, we change the notion of compositionality that the resulting theory has; whilst by varying the functor $F$ we obtain different instantiations of the theory, that is, $F$ can be thought of as encoding the particular physics that we are interested in. However, what we have not yet considered varying is the codomain of $F$, we have left this fixed as the operad $\textsc{set}$. 

In the case of traditional process theories, i.e., operad algebras $F:\cW^A\to \textsc{set}$, i.e., symmetric monoidal categories $\mathcal{C}_F$, We have seen that the image of some object in $W$ under the functor $F$ is what determines the hom-set in the SMC, $\mathcal{C}_F$, that is constructed. However, there are many times where the process theories that we are interested in are enriched, in that the processes between a particular set of inputs and outputs do not merely form a set, but themselves have some additional structure. A classic example of enrichment which appears in many fundamental physical frameworks is the notion of convexity, or more concretely, the notion that the processes can be combined using convex combinations to form convex spaces \cite{fritz2015convex}. This can indeed be captured by varying the codomain of $F$ as we now show.

\begin{theorem}
An operad algebra $F:\cW^A\to K$ encodes the information that constitutes a $K$-enriched symmetric monoidal category.
\end{theorem}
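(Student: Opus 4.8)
The plan is to observe that the construction of the symmetric monoidal category $\mathcal{C}_F$ from an algebra $F : \cW^A \to \textsc{set}$, carried out earlier in the paper, never actually used any feature specific to $\textsc{set}$: it used only that $F$ assigns an object of the codomain to each box, an operation of the codomain to each wiring, and that $F$ is functorial with respect to operadic composition and identities in $\cW^A$. Since all of these are available for an arbitrary codomain operad $K$, I would simply re-run that construction verbatim, reading every ``function'' as an ``operation in $K$'' and every ``hom-set'' as a ``hom-object of $K$''. Concretely, I would first set $\mathcal{C}_F(A,B) := F\!\left(\text{box}_{A,B}\right) \in K$, and then extract the enriched structure by applying $F$ to the same four distinguished wirings used before: the identity wiring yields a nullary operation $() \to \mathcal{C}_F(A,A)$ in $K$ (the enriched identity), the sequential-composition wiring of Eq.~\eqref{def:seq} yields a binary operation $\mathcal{C}_F(A,B),\,\mathcal{C}_F(B,C) \to \mathcal{C}_F(A,C)$ (the enriched composition), the parallel-composition wiring of Eq.~\eqref{def:par} yields the action of the monoidal product on hom-objects, and the swap wiring of Eq.~\eqref{def:morphswap} yields the symmetry.

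The reason this produces an \emph{enriched} rather than an ordinary category is that in a general operad $K$ a binary operation $s,t \to u$ plays exactly the role that, in the representable case $K = \cO_V$, a $V$-morphism $s \otimes t \to u$ plays. Thus the extracted composition operation is precisely an enriched composition morphism, the nullary identity operation is precisely an enriched unit $I \to \mathcal{C}_F(A,A)$ (the empty list of inputs, which in $\textsc{set}$ was the one-point set $\star$, is here what supplies the enriching unit), and the multi-input structure of $K$ itself furnishes the tensor product over which the enrichment is taken. A clean conceptual payoff to emphasise is that, unlike the $\textsc{set}$ case where composition was a function out of a \emph{product} $\mathcal{C}_F(A,B)\times\mathcal{C}_F(B,C)$, we never need $K$ to possess products: the operadic multimorphisms replace the cartesian product by the enriching tensor. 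I would make the target precise by defining a \emph{$K$-enriched symmetric monoidal category} to be one whose hom-objects are objects of $K$ and whose composition, units, tensor action on homs, and symmetry are operations of $K$ satisfying the usual enriched coherence equations, noting that when $K$ is representable this recovers ordinary $V$-enrichment.

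Finally, I would verify the enriched SMC axioms. The key point is that the diagrammatic proofs given for the $\textsc{set}$ case --- for instance the unitality derivation, each step of which was justified solely by functoriality of $F$, by composition in $\cW^A$, and by the definition of the identity wiring --- remain valid word for word, since none of those justifications ever mentions $\textsc{set}$. Each of the associativity, unit, interchange, and symmetry axioms reduces, via functoriality of $F$, to a corresponding equation between wiring operations in $\cW^A$, and those equations already hold in $\cW^A$ independently of the chosen codomain. Hence the verification is a routine transport of the earlier argument.

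I expect the main obstacle to be definitional rather than computational: one must pin down precisely what a ``$K$-enriched symmetric monoidal category'' is when $K$ is an operad (a multicategory) rather than a genuine symmetric monoidal category, and then check that the operations extracted from $F$ assemble into exactly that data. The two delicate points are (i) that the symmetry of the operad $K$ is what makes the enriched symmetry axioms even expressible and is what is invoked in verifying them, and (ii) that the nullary operations of $K$ genuinely behave as the enriching unit. Once the definition is fixed so that the representable case $K = \cO_V$ gives back ordinary $V$-enrichment, these checks become straightforward and the remainder is mechanical.
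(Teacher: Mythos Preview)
Your proposal is correct and follows essentially the same approach as the paper: re-run the $\textsc{set}$-valued construction verbatim with $K$ in place of $\textsc{set}$, extracting hom-objects from $F$ on boxes and the enriched composition, unit, tensor, and symmetry from $F$ applied to the four distinguished wirings, and verifying the axioms by functoriality exactly as before. If anything, your treatment is more thorough than the paper's own proof, which is a two-sentence sketch; in particular, your explicit flagging of the definitional issue --- what ``$K$-enriched'' should mean when $K$ is merely an operad, and how the nullary operations supply the enriching unit --- goes beyond what the paper spells out.
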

\proof 
An enriched category is one in which for each pair of objects $A,B$ there is a hom-object $\mathcal{C}_F(A,B)$ in $K$ along with operations in $K$ of type $(\mathcal{C}_F(A,B), \mathcal{C}_F(B,C)) \rightarrow \mathcal{C}_F(A,C)$ which satisfy laws analogous to associativity in $\textsc{set}$. Naturally a monoidal enriched category is equipped with additional parallel composition morphisms $\otimes : (\mathcal{C}_F(A,A'), \mathcal{C}_F(B,B')) \rightarrow \mathcal{C}_F(AB,A'B')$ which satisfy interchange with the sequential composition morphisms. The construction and well-behaviour of this structure is given in exactly the same way as it was presented for $\textsc{set}$.
\endproof

A key example is convex-enrichment where $K$ can be taken to be the operad $\text{ConvSpc}$ of convex spaces and convex maps. Enrichment here enforces not only that convex combinations of processes exist, but that sequential and parallel composition play well with this notion of convex combination, meaning that all conditions such as $f \circ (pg + p'g') = pf \circ g + p' f \circ g'$ are satisfied by both $\circ$ and $\otimes$. 
Concrete examples of process theories which are enriched in convex spaces include but are not limited to the theories of quantum channels, completely positive maps, unital channels, and classical channels also known as stochastic matrices. 

This operadic view of enriched process theories can be leveraged to define a convenient string diagrammatic language for enriched symmetric monoidal categories see App.~\ref{app:EnrichedStringDiagrams}.

This string diagrammatic language for enriched symmetric monoidal categories bares a striking resemblance to the diagrams with the framework of causal-inferential theories \cite{schmid2020unscrambling}, a connection which we will explore in future work.

The key point of the above theorem, is that we can extend it to express enrichment for generalised process theories, that is,
\begin{definition}
A generalised process theory is an operad algebra 
\beq
F:\mathcal{O}\to K,
\eeq
where $\mathcal{O}$ captures the composition of processes, $K$ captures a theory of enrichment, and $F$ encodes the specific theory of interest.
\end{definition}

\section{Generalised process theories: discussion and future work}

In order to cope with the variety of compositional notions in the foundations of physics, we have discussed a possible formulation of generalised process theories in terms of a minor modification of wiring operads. This view encapsulates causal process theories, time neutral process theories, and aspects of higher-order process theories, and that enrichment arises by simply modifying the codomain of the algebra at hand. Furthermore, the generalised process theory has the potential to provide a shortcut to the definition of new categorical structures associated to new compositional notions. We saw a humble glimpse of this in the characterisation of the undirected acyclic wiring operad in terms of polycategories with space. There is much more to be discovered here, given the recent discoveries and characterisations of elaborate composition rules in higher order quantum theory, in terms of graph structures, proof nets, and general algorithmic procedures \cite{apadula_nosig, kissinger_causal, Hoffreumon_projective, simmons_bv_logic}.

Another key potential application of the move to generalised process theories is in the formalisation of resource theories \cite{CFS} in all of the previously mentioned scenarios in which the notions of compositionality is either more restricted or more relaxed than what is permitted by symmetric monoidal structure. Crucially, it is common practice in the study of resource theories to equate resources in terms of convertibility using by arbitrary compositions of free processes. Clearly then, the allowed compositions of free processes within a theory are likely to have consequences: The more compositional rules that are permitted, there more coarse-grained the equivalence between resources should become. To capture these ideas in an organised way, we will need a definition of resource theory for generalised process theories beyond the standard notion of monodial subcategory as in \cite{CFS}. There is an even more ambitious possibility raised by this line of thought: What if compositionality itself was a resource? To frame this will require a definition of structure preserving map between generalised processes theories over different classes of wiring operads, and will likely require generalisation of the methods for modelling non-identity-on-on-objects functors. 

In addition, the operadic framework for process theories provides a natural setting for describing tensor networks, which are widely used in many body physics and quantum information theory \cite{orus2014practical}. In particular, tensor networks can be understood as time-neutral process theories, where tensors correspond to elements of the operad algebra associated with the wiring operad of dots,  $\cW^{D}$. In this case, the operad  $\cW^{D}$, encodes the abstract rules of composition, specifying how systems can be connected, while the operad algebra assigns to each system a set of tensors that can be composed according to these rules. Tensor contraction in particular, corresponds to operadic composition of processes with summation over contracted indices being identified with connected wires.  

The operadic framework provides a natural setting for ZX calculus \cite{van2020zx,coecke2022kindergarden}. ZX calculus is usually formulated within a SMC. However, spiders suggest a more time-neutral formulation, where inputs and outputs are not fundamentally distinguished as in the wiring operad of dots. In this perspective, ZX spiders correspond to elements of the operad algebra of  $\cW^{D}$, encoding fusion rules as operadic composition and not as categorical morphisms. In the case where causal constraints are introduced , the causal wiring operad provides a way to include discarding operations ensuring that ZX diagrams respect causality. We suggest that operad algebras offer a unifying framework for ZX calculus, leading to new insights in its compositional structure and quantum computation. 

We furthermore believe that the operadic framework shares a deep structural connection with Compositional Quantum Field Theory (CQFT) as presented in \cite{GBQFT}. CQFT aims to provide an axiomatic, compositional approach to quantum field theory. The fundamental building blocks are given by state spaces associated with spacetime regions, whereas physical processes correspond to linear maps between those spaces. Consequently,  gluing spacetimes regions identifies with the composition of linear maps. This approach can be formulated in terms of operad algebras where objects in the operad correspond to spacetime regions and operations in the operad correspond to ways in which those regions can be composed. The operad algebra would then assign to each region a state space and to each operad operation a physcal evolution map. In other words, we presume that CQFT can be framed as an operad algebra over a spacetime operad. 

Moeover, in the context of CQFT quantum processes are related with arbitrary spacetime regions rather than being constrained by an initial and final time-slice. This feature might be one that could be captured by time-neutral process theories, where the standard input-output distinction is abandoned in favour of a more flexible network of interactions.

The purpose of the traditional process theory formalism is to model the purely compositional aspect of physical theories. With it's motivations coming initially from quantum foundations and quantum computation, it is natural that it was  based on the circuit-model, equivalently the model of symmetric monoidal categories, in which there exist simple sequential and parallel composition rules. An important point regarding the adoption of the circuit model in quantum foundations, is that it would have been much more difficult if category-theoretic expertise was a pre-requisite. With this in mind, the generalised process theory formalism, based on algebras of wiring operads, seems to give a suitable meta-framework for defining and using composition rules beyond the circuit model. In particular, whilst based on (and an extension of) the insights of \cite{spivak2013operad,spivak2017string,patterson2021wiring,fong2018seven, yau2018operads} it is framed in such a way as to be usable without expertise in category theory. It is our hope that this framework will hence help to give structure to the ever expanding story of compositionality in the foundations of physics.

\section*{Acknowledgements}

JHS thanks Enzo Pons for many interesting discussions regarding the operadic approach to compositional theories. MW thanks James Hefford for useful conversations. MW acknowledges support by the National Science Centre, Poland (Opus project, categorical foundations of the non-classicality of nature, contract number UMO-2021/41/B/ST2/03149). JHS was funded by the
European Commission by
the QuantERA project ResourceQ under the grant
agreement UMO-2023/05/Y/ST2/00143.
JHS conducted part of this research while visiting the Okinawa Institute of Science and Technology (OIST) through the Theoretical Sciences Visiting
Program (TSVP). BC carried out this work while at Oxford University. MS carried out this work while at Oxford University, was also supported in part by
the Foundation for Polish Science through IRAP project co-financed by EU within Smart
Growth Operational Programme (contract no. 2018/MAB/5), and is currently funded by DESY. 
All of the diagrams within this manuscript were prepared using \href{https://tikzit.github.io/}{TikZit}.

\bibliographystyle{plain}
\bibliography{main}

\begin{thebibliography}{54}
\providecommand{\natexlab}[1]{#1}
\providecommand{\url}[1]{\texttt{#1}}
\expandafter\ifx\csname urlstyle\endcsname\relax
  \providecommand{\doi}[1]{doi: #1}\else
  \providecommand{\doi}{doi: \begingroup \urlstyle{rm}\Url}\fi

\bibitem[Apadula et~al.(2022)Apadula, Bisio, and Perinotti]{apadula_nosig}
Luca Apadula, Alessandro Bisio, and Paolo Perinotti.
\newblock No-signalling constrains quantum computation with indefinite causal
  structure.
\newblock \emph{arXiv}, 2022.
\newblock \doi{10.48550/ARXIV.2202.10214}.
\newblock URL \url{https://arxiv.org/abs/2202.10214}.

\bibitem[Barrett(2007)]{Barrett}
J.~Barrett.
\newblock Information processing in generalized probabilistic theories.
\newblock \emph{Physical Review A}, 75:\penalty0 032304, 2007.

\bibitem[Bisio and Perinotti(2019)]{bisio2019theoretical}
Alessandro Bisio and Paolo Perinotti.
\newblock Theoretical framework for higher-order quantum theory.
\newblock \emph{Proceedings of the Royal Society A}, 475\penalty0
  (2225):\penalty0 20180706, 2019.

\bibitem[Boisseau et~al.(2022)Boisseau, Nester, and
  Roman]{boisseau2022cornering}
Guillaume Boisseau, Chad Nester, and Mario Roman.
\newblock Cornering optics.
\newblock 2022.

\bibitem[Chiribella et~al.(2008{\natexlab{a}})Chiribella,
  D{\textquotesingle}Ariano, and Perinotti]{Chiribella_2008_supermaps}
G.~Chiribella, G.~M. D{\textquotesingle}Ariano, and P.~Perinotti.
\newblock Transforming quantum operations: Quantum supermaps.
\newblock \emph{{EPL} (Europhysics Letters)}, 83\penalty0 (3):\penalty0 30004,
  jul 2008{\natexlab{a}}.
\newblock \doi{10.1209/0295-5075/83/30004}.
\newblock URL \url{https://doi.org/10.12092F0295-50752F832F30004}.

\bibitem[Chiribella et~al.(2008{\natexlab{b}})Chiribella, D'Ariano, and
  Perinotti]{chiribella_architecture}
G.~Chiribella, G.~M. D'Ariano, and P.~Perinotti.
\newblock Quantum circuit architecture.
\newblock \emph{Phys. Rev. Lett.}, 101:\penalty0 060401, Aug
  2008{\natexlab{b}}.
\newblock \doi{10.1103/PhysRevLett.101.060401}.
\newblock URL \url{https://link.aps.org/doi/10.1103/PhysRevLett.101.060401}.

\bibitem[Chiribella et~al.(2010)Chiribella, D'Ariano, and Perinotti]{Chiri1}
G.~Chiribella, G.~M. D'Ariano, and P.~Perinotti.
\newblock Probabilistic theories with purification.
\newblock \emph{Physical Review A}, 81\penalty0 (6):\penalty0 062348, 2010.

\bibitem[Chiribella et~al.(2009)Chiribella, D'Ariano, and
  Perinotti]{chiribella_networks}
Giulio Chiribella, Giacomo~Mauro D'Ariano, and Paolo Perinotti.
\newblock Theoretical framework for quantum networks.
\newblock \emph{Phys. Rev. A}, 80:\penalty0 022339, Aug 2009.
\newblock \doi{10.1103/PhysRevA.80.022339}.
\newblock URL \url{https://link.aps.org/doi/10.1103/PhysRevA.80.022339}.

\bibitem[Chiribella et~al.(2013)Chiribella, D'Ariano, Perinotti, and
  Valiron]{chiribella_causal}
Giulio Chiribella, Giacomo~Mauro D'Ariano, Paolo Perinotti, and Benoit Valiron.
\newblock Quantum computations without definite causal structure.
\newblock \emph{Phys. Rev. A}, 88:\penalty0 022318, Aug 2013.
\newblock \doi{10.1103/PhysRevA.88.022318}.
\newblock URL \url{https://link.aps.org/doi/10.1103/PhysRevA.88.022318}.

\bibitem[Coecke(2005)]{Kindergarten}
B.~Coecke.
\newblock Kindergarten quantum mechanics.
\newblock In A.~Khrennikov, editor, \emph{Quantum Theory: Reconsiderations of
  the Foundations III}, pages 81--98. AIP Press, 2005.
\newblock {a}rXiv:quant-ph/0510032.

\bibitem[Coecke and Kissinger(2016)]{CKbook}
B.~Coecke and A.~Kissinger.
\newblock \emph{Picturing Quantum Processes. A First Course in Quantum Theory
  and Diagrammatic Reasoning}.
\newblock Cambridge University Press, 2016.

\bibitem[Coecke and Lal(2013)]{CRCaucat}
B.~Coecke and R.~Lal.
\newblock Causal categories: relativistically interacting processes.
\newblock \emph{Foundations of Physics}, 43:\penalty0 458--501, 2013.
\newblock arXiv:1107.6019.

\bibitem[Coecke et~al.(2014)Coecke, Fritz, and Spekkens]{CFS}
B.~Coecke, T.~Fritz, and R.~W. Spekkens.
\newblock A mathematical theory of resources.
\newblock \emph{Information and Computation, to appear}, 2014.
\newblock arXiv:1409.5531.

\bibitem[Coecke and Paquette(2010)]{coecke2010categories}
Bob Coecke and Eric~Oliver Paquette.
\newblock Categories for the practising physicist.
\newblock In \emph{New structures for physics}, pages 173--286. Springer, 2010.

\bibitem[Coecke et~al.(2022)Coecke, Horsman, Kissinger, and
  Wang]{coecke2022kindergarden}
Bob Coecke, Dominic Horsman, Aleks Kissinger, and Quanlong Wang.
\newblock Kindergarden quantum mechanics graduates... or how i learned to stop
  gluing lego together and love the zx-calculus.
\newblock \emph{Theoretical Computer Science}, 897:\penalty0 1--22, 2022.

\bibitem[D'Ariano et~al.(2017)D'Ariano, Chiribella, and
  Perinotti]{d2017quantum}
Giacomo~Mauro D'Ariano, Giulio Chiribella, and Paolo Perinotti.
\newblock \emph{Quantum theory from first principles: an informational
  approach}.
\newblock Cambridge University Press, 2017.

\bibitem[Earnshaw et~al.(2023)Earnshaw, Hefford, and
  Rom{\'a}n]{earnshaw_produoidal}
Matt Earnshaw, James Hefford, and Mario Rom{\'a}n.
\newblock The produoidal algebra of process decomposition.
\newblock \emph{arXiv}, 2023.
\newblock \doi{10.48550/ARXIV.2301.11867}.
\newblock URL \url{https://arxiv.org/abs/2301.11867}.

\bibitem[Fong and Spivak(2018)]{fong2018seven}
Brendan Fong and David~I Spivak.
\newblock Seven sketches in compositionality: An invitation to applied category
  theory.
\newblock \emph{arXiv preprint arXiv:1803.05316}, 2018.

\bibitem[Fritz(2015)]{fritz2015convex}
Tobias Fritz.
\newblock Convex spaces i: Definition and examples.
\newblock 2015.

\bibitem[Hardy(2001)]{HardyAxiom}
L.~Hardy.
\newblock Quantum theory from five reasonable axioms.
\newblock \emph{arXiv:quant-ph/0101012}, 2001.

\bibitem[Harrigan and Spekkens(2010)]{harrigan1}
N.~Harrigan and R.~W. Spekkens.
\newblock Einstein, incompleteness, and the epistemic view of quantum states.
\newblock \emph{Foundations of Physics}, 40:\penalty0 125--157, 2010.

\bibitem[Hefford and Comfort(2022)]{hefford_coend}
James Hefford and Cole Comfort.
\newblock Coend optics for quantum combs.
\newblock \emph{arXiv}, 2022.
\newblock \doi{10.48550/ARXIV.2205.09027}.
\newblock URL \url{https://arxiv.org/abs/2205.09027}.

\bibitem[Hermida and Tennent(2012)]{hermida2012monoidal}
Claudio Hermida and Robert~D Tennent.
\newblock Monoidal indeterminates and categories of possible worlds.
\newblock \emph{Theoretical Computer Science}, 430:\penalty0 3--22, 2012.

\bibitem[Hoffreumon and Oreshkov(2022)]{Hoffreumon_projective}
Timoth{\'e}e Hoffreumon and Ognyan Oreshkov.
\newblock Projective characterization of higher-order quantum transformations.
\newblock \emph{arXiv}, 2022.
\newblock \doi{10.48550/ARXIV.2206.06206}.
\newblock URL \url{https://arxiv.org/abs/2206.06206}.

\bibitem[Huot and Staton(2019)]{Huot_2019}
Mathieu Huot and Sam Staton.
\newblock Universal properties in quantum theory.
\newblock \emph{Electronic Proceedings in Theoretical Computer Science},
  287:\penalty0 213--223, jan 2019.
\newblock \doi{10.4204/eptcs.287.12}.

\bibitem[Kissinger and Uijlen(2017)]{kissinger2017categorical}
Aleks Kissinger and Sander Uijlen.
\newblock A categorical semantics for causal structure.
\newblock In \emph{Logic in Computer Science (LICS), 2017 32nd Annual ACM/IEEE
  Symposium on}, pages 1--12. IEEE, 2017.

\bibitem[Kissinger and Uijlen(2019)]{kissinger_causal}
Aleks Kissinger and Sander Uijlen.
\newblock {A categorical semantics for causal structure}.
\newblock \emph{{Logical Methods in Computer Science}}, {Volume 15, Issue 3},
  August 2019.
\newblock \doi{10.23638/LMCS-15(3:15)2019}.
\newblock URL \url{https://lmcs.episciences.org/5681}.

\bibitem[Mac~Lane(1963)]{MacLaneCoherence}
S.~Mac~Lane.
\newblock Natural associativity and commutativity.
\newblock \emph{The Rice University Studies}, 49\penalty0 (4):\penalty0 28--46,
  1963.

\bibitem[Mac~Lane(1998)]{MacLane}
S.~Mac~Lane.
\newblock \emph{Categories for the working mathematician}.
\newblock Springer-verlag, 1998.

\bibitem[Oeckl and Almada(2024)]{GBQFT}
Robert Oeckl and Juan~Orendain Almada.
\newblock Compositional quantum field theory: An axiomatic presentation.
\newblock 2024.
\newblock URL \url{https://arxiv.org/pdf/2208.10385}.

\bibitem[Oreshkov and Cerf(2015)]{oreshkov2015operational}
Ognyan Oreshkov and Nicolas~J Cerf.
\newblock Operational formulation of time reversal in quantum theory.
\newblock \emph{Nature Physics}, 11\penalty0 (10):\penalty0 853--858, 2015.

\bibitem[Oreshkov and Cerf(2016)]{oreshkov2016operational}
Ognyan Oreshkov and Nicolas~J Cerf.
\newblock Operational quantum theory without predefined time.
\newblock \emph{New Journal of Physics}, 18\penalty0 (7):\penalty0 073037,
  2016.

\bibitem[Or{\'u}s(2014)]{orus2014practical}
Rom{\'a}n Or{\'u}s.
\newblock A practical introduction to tensor networks: Matrix product states
  and projected entangled pair states.
\newblock \emph{Annals of physics}, 349:\penalty0 117--158, 2014.

\bibitem[Patterson et~al.(2021)Patterson, Spivak, and
  Vagner]{patterson2021wiring}
Evan Patterson, David~I Spivak, and Dmitry Vagner.
\newblock Wiring diagrams as normal forms for computing in symmetric monoidal
  categories.
\newblock \emph{arXiv preprint arXiv:2101.12046}, 2021.

\bibitem[Portmann et~al.(2017)Portmann, Matt, Maurer, Renner, and
  Tackmann]{Portmann_2017_causal_box}
Christopher Portmann, Christian Matt, Ueli Maurer, Renato Renner, and Bjorn
  Tackmann.
\newblock Causal boxes: Quantum information-processing systems closed under
  composition.
\newblock \emph{{IEEE} Transactions on Information Theory}, pages 1--1, 2017.
\newblock \doi{10.1109/tit.2017.2676805}.

\bibitem[rin et~al.(2019)rin, Krumm, Budroni, and {C}aslav
  Brukner]{Guerin_2019_nogo}
Philippe Allard~Gue rin, Marius Krumm, Costantino Budroni, and {C}aslav
  Brukner.
\newblock Composition rules for quantum processes: a no-go theorem.
\newblock \emph{New Journal of Physics}, 21\penalty0 (1):\penalty0 012001, jan
  2019.
\newblock \doi{10.1088/1367-2630/aafef7}.

\bibitem[Salzger and Vilasini(2024)]{salzger2024mapping}
Matthias Salzger and V~Vilasini.
\newblock Mapping indefinite causal order processes to composable quantum
  protocols in a spacetime.
\newblock \emph{arXiv preprint arXiv:2404.05319}, 2024.

\bibitem[Schmid et~al.(2020{\natexlab{a}})Schmid, Selby, Pusey, and
  Spekkens]{schmid2020structure}
David Schmid, John~H Selby, Matthew~F Pusey, and Robert~W Spekkens.
\newblock A structure theorem for generalized-noncontextual ontological models.
\newblock \emph{arXiv preprint arXiv:2005.07161}, 2020{\natexlab{a}}.

\bibitem[Schmid et~al.(2020{\natexlab{b}})Schmid, Selby, and
  Spekkens]{schmid2020unscrambling}
David Schmid, John~H Selby, and Robert~W Spekkens.
\newblock Unscrambling the omelette of causation and inference: The framework
  of causal-inferential theories.
\newblock \emph{arXiv preprint arXiv:2009.03297}, 2020{\natexlab{b}}.

\bibitem[Selby et~al.(2022)Selby, Stasinou, Gogioso, and Coecke]{timesymmetry}
John~H Selby, Maria~E Stasinou, Stefano Gogioso, and Bob Coecke.
\newblock Time symmetry in quantum theories and beyond.
\newblock \emph{arXiv preprint arXiv:2209.07867}, 2022.

\bibitem[Simmons and Kissinger(2022)]{simmons_bv_logic}
Will Simmons and Aleks Kissinger.
\newblock Higher-order causal theories are models of bv-logic.
\newblock \emph{arXiv}, 2022.
\newblock \doi{10.48550/ARXIV.2205.11219}.
\newblock URL \url{https://arxiv.org/abs/2205.11219}.

\bibitem[Spekkens(2007)]{SpekToy}
R.~W. Spekkens.
\newblock Evidence for the epistemic view of quantum states: A toy theory.
\newblock \emph{Physical Review A}, 75\penalty0 (3):\penalty0 032110, 2007.

\bibitem[Spekkens(2016)]{spekkens2016quasi}
Robert~W Spekkens.
\newblock Quasi-quantization: classical statistical theories with an epistemic
  restriction.
\newblock \emph{Quantum Theory: Informational Foundations and Foils}, pages
  83--135, 2016.

\bibitem[Spivak(2013)]{spivak2013operad}
David~I Spivak.
\newblock The operad of wiring diagrams: Formalizing a graphical language for
  databases, recursion, and plug-and-play circuits.
\newblock \emph{arXiv preprint arXiv:1305.0297}, 2013.

\bibitem[Spivak et~al.(2017)Spivak, Schultz, and Rupel]{spivak2017string}
David~I Spivak, Patrick Schultz, and Dylan Rupel.
\newblock String diagrams for traced and compact categories are oriented
  1-cobordisms.
\newblock \emph{Journal of Pure and Applied Algebra}, 221\penalty0
  (8):\penalty0 2064--2110, 2017.

\bibitem[Szabo(1975)]{szabo_polycats}
M.E. Szabo.
\newblock Polycategories.
\newblock \emph{Communications in Algebra}, 3\penalty0 (8):\penalty0 663--689,
  1975.
\newblock \doi{10.1080/00927877508822067}.
\newblock URL \url{https://doi.org/10.1080/00927877508822067}.

\bibitem[van~de Wetering(2020)]{van2020zx}
John van~de Wetering.
\newblock Zx-calculus for the working quantum computer scientist.
\newblock \emph{arXiv preprint arXiv:2012.13966}, 2020.

\bibitem[Vanrietvelde et~al.(2022)Vanrietvelde, Ormrod, Kristj{\'a}nsson, and
  Barrett]{vanreitvelde_consistent_circuit}
Augustin Vanrietvelde, Nick Ormrod, Hl{\'e}r Kristj{\'a}nsson, and Jonathan
  Barrett.
\newblock Consistent circuits for indefinite causal order.
\newblock \emph{arXiv}, 2022.
\newblock \doi{10.48550/ARXIV.2206.10042}.
\newblock URL \url{https://arxiv.org/abs/2206.10042}.

\bibitem[Vilasini and Renner(2022)]{venkatesh_cyclic_spacetime}
V.~Vilasini and Renato Renner.
\newblock Embedding cyclic causal structures in acyclic spacetimes: no-go
  results for process matrices.
\newblock \emph{arXiv}, 2022.
\newblock \doi{10.48550/ARXIV.2203.11245}.
\newblock URL \url{https://arxiv.org/abs/2203.11245}.

\bibitem[Wang-Mascianica et~al.(2023)Wang-Mascianica, Liu, and
  Coecke]{wang2023distilling}
Vincent Wang-Mascianica, Jonathon Liu, and Bob Coecke.
\newblock Distilling text into circuits.
\newblock \emph{arXiv preprint arXiv:2301.10595}, 2023.

\bibitem[Wilson and Chiribella(2022)]{wilson_polycat}
Matt Wilson and Giulio Chiribella.
\newblock Free polycategories for unitary supermaps of arbitrary dimension.
\newblock \emph{arXiv}, 2022.
\newblock \doi{10.48550/ARXIV.2207.09180}.
\newblock URL \url{https://arxiv.org/abs/2207.09180}.

\bibitem[Wilson et~al.(2022)Wilson, Chiribella, and Kissinger]{wilson_local}
Matt Wilson, Giulio Chiribella, and Aleks Kissinger.
\newblock Quantum supermaps are characterized by locality.
\newblock \emph{arXiv}, 2022.
\newblock \doi{10.48550/ARXIV.2205.09844}.
\newblock URL \url{https://arxiv.org/abs/2205.09844}.

\bibitem[Yau(2008)]{yau2008higher}
Donald Yau.
\newblock Higher dimensional algebras via colored props.
\newblock \emph{arXiv preprint arXiv:0809.2161}, 2008.

\bibitem[Yau(2018)]{yau2018operads}
Donald Yau.
\newblock \emph{Operads of wiring diagrams}.
\newblock Springer, 2018.

\end{thebibliography}

\appendix

\section{Time neutrality from cups and caps}\label{app:cupsandcaps}
In this appendix we extend the acyclic wiring operad, $\cW^A$, by adding in new wirings in which inputs and outputs can be freely connected to one another. For example, unlike in $\cW^A$, we permit wirings such as:
\[
\tikzfig{wiringTest7}.
\]
\begin{definition}
The cyclic wiring operad, $\cW^C$, has the same objects as the acylic wiring operad (i.e., boxes) and the same composition rule (i.e., diagram substitution), but has extra wirings, namely those that are cyclic.
\end{definition}

Note that we can construct any such cyclic wiring from the wirings in $\cW^A$ together with two additional core wirings: the wiring diagram $\textsf{cup}$,
\[
\tikzfig{cupop},
\]
and the wiring diagram $\textsf{cap}$,
\[
\tikzfig{capop}. 
\]
These allow us to freely interchange inputs with outputs of  boxes. For instance,
\[
\tikzfig{compactoperadexample} = \tikzfig{compactoperadexample1}, 
\]
in which the input $A$ of the box is turned into an output by applying a cup. 
The wirings $\textsf{cup}$ and $\textsf{cap}$ satisfy the following conditions:
\[
\tikzfig{compactclosureoperad}. 
\]
The first equality in each of these follows from  the definition of composition as diagram substitution, and the second equality in each is really a tautology. For example,
\[
\tikzfig{compactclosureoperad1}
\]
describe the exact same wiring. The reason for picking out these conditions, however, will be clear when we consider the operad algebras for this operad.

The algebras of the cyclic wiring operad $F:\cW^C \to \textsc{set}$ correspond to compact closed categories. In addition to the data specified by $\cW^A$, the functor $F$ also picks out special elements of the homsets $\mathcal{C}(I,A\otimes A)$ and $\mathcal{C}(A\otimes A, I)$. This is achieved via the functions
 $F(\textsf{cup})$ which is diagrammatically drawn as
 \[
 \tikzfig{cupFunctor}
 \]
 and 
$F(\textsf{cap})$ which is diagrammatically drawn as
\[
 \tikzfig{capFunctor}.
 \]
These special morphisms in $\mathcal{C}(I,A\otimes A)$ and $\mathcal{C}(A\otimes A,I)$ can easily be shown to satisfy the conditions required to define a compact closed symmetric monoidal category. For example:
\begin{align}
& \tikzfig{ccFunct1} \\
&\quad = \tikzfig{ccFunct2} \\
&\qquad = \tikzfig{ccFunct3} \\
&\quad\qquad = \tikzfig{ccFunct4} 
= \tikzfig{ccFunct5} = \tikzfig{ccFunc6}.
\end{align}
One may be tempted to ask whether we can define a wiring operad which allows for both discarding as well as cups and caps. This, however, quickly runs into difficulties since the cap must be discarding and thus the resulting theory trivialises. In this sense these two extensions of $\cW^A$ are incompatible with one another.

To sum up, we have relaxed the constraints on the wirings in the wiring operad $\cW^A$ to define the cyclic wiring operad $\cW^C$,  such that the algebras of $\cW^C$ correspond to compact closed SMCs. Such process theories have been used, e.g., in Ref.~\cite{timesymmetry}, to describe time-neutral theories of physics. Arguably, however, such theories are not truly time-neutral as there is still a distinction between the inputs and outputs of a box regardless of the fact that they can be freely interchanged via cups and caps. There is however, a close connection to the time-neutral process theories that we defined in the main text as we discuss in the following subsection.

\subsection{Relating time-neutral process theories and compact closed SMCs}

In this section we demonstrate a tight connection between time-neutral process theories and compact closed SMCs (viewed as algebras of $\cW^C$), in order to relate our new definition of time-neutral theories with those in the literature.

More specifically, we will define a pair of operad functors  \colorbox{red!10}{$\alpha:\cW^C\to\cW^D$} and  \colorbox{green!10}{$\beta:\cW^D \to \cW^C$}. Where we find that $\alpha \circ \beta = I_{\cW^D}$ and that $\beta\circ \alpha \cong I_{\cW^C}$, hence showing that $\cW^C$ and $\cW^D$ are equivalent operads.  

At the level of the algebras we can use these functors to turn algebras for $\cW^{D}$ into algebras for $\cW^C$ and vice versa:
\[
\tikzfig{tn+cc}
\]
Specifically, the functor $\alpha$ allows us to map a time-neutral theory $G$ to an associated process theory with compact structure $F_G := G\circ\alpha$, whereas the functor $\beta$ allows us to map a process theory with compact structure $F$ to a time neutral theory  $G_F := F\circ\beta$. The equivalence of the two operads then immediately leads to an equivalence of the associated algebras. 

In this sense, it is therefore simply a matter of presentation as to whether to work with time-neutral process theories or to work with traditional process theories equipped with cups and caps. However, as we argued in the main text, time-neutral process theories appear to be presentationally the cleanest choice.  

To formalise this equivalence we first have to introduce natural transformations for operad functors. 
\begin{definition}\label{def:natTrans} A natural transformation $\eta: \colorbox{blue!10}{$F:\cO\to \cO'$} \implies \colorbox{red!10}{$G:\cO\to \cO'$}$ is defined as a collection of operad operations in $\cO'$ indexed by the objects in $\cO$. We denote these as $\eta_{t}$ and require that they satisfy:
\beq\label{eq:NatTran}
\tikzfig{figures/opNatTran}\quad = \quad \tikzfig{figures/opNatTran1}
\eeq
for all operations $f\in \cO$.
\end{definition}

Next let us define the functor \colorbox{red!10}{$\alpha:\cW^C \to \cW^{D}$}. Its action on objects is given by:
\[
\alpha \left(\tikzfig{wboxes}\!\!\! \right)  = \tikzfig{alphaDef1}.
\]
Note that this is not an injective mapping as it forgets the distinction between inputs and outputs. However, it preserves the planar orientation of systems. For example,
\[
\tikzfig{boxestodots1}\quad \text{whilst}\quad\tikzfig{boxestodots2}.
\]
The action of $\alpha$ on the wirings of $\cW^C$ simply gives the wiring in $\cW^{D}$ which has the same connectivity. For example:
\[
\tikzfig{wdboxestodot}
\]
In particular, note that $\alpha$ acts to identify cups, caps, and identity wirings in $\cW^C$:
\[
\tikzfig{identifyCupsCapsIdents}
\]
such that cups and caps are simply redundant once working with the wiring operad of dots. 

It is easy to then see that $\alpha$ does indeed define an operad functor, for example:
\begin{align}
\tikzfig{wdboxestodotFunct1} \quad &=\quad \tikzfig{wdboxestodotFunct2} \quad
=\quad \tikzfig{wdboxestodotFunct3} \\
= \tikzfig{wdboxestodotFunct4} 
&=\quad \tikzfig{wdboxestodotFunct5}.
\end{align}

Next let us define the functor \colorbox{green!10}{$\beta:\cW^D\to\cW^C$}. This is not quite so straightforward to define -- moving from $\cW^C$ to $\cW^{D}$ amounted to forgetting structure, that is, forgetting the input-output distinction, in order to go back from $\cW^{D}$ to $\cW^C$ we are therefore forced to artificially reintroduce this distinction. Indeed, there is a degree of arbitrariness as to how we should approach the problem. Here we will work with the convention that all of the systems are assigned to be outputs. That is, we take the action of $\beta$ on objects to be:
\[
\beta\left(\tikzfig{circle}\right) := \tikzfig{betaOnObjects}.
\]
It is clear that this fails to be a surjective mapping, as we only obtain boxes which have no inputs, hence really $\beta$ is faithfully mapping $\cW^D$ into a full suboperad of $\cW^C$. 

The action of $\beta$ then maps wirings of dots to wirings of boxes-without-inputs while preserving the connectivity. For example,
\[
\tikzfig{wddotstoboxes}.
\]
It is again easy to check that this is indeed an operad functor, for example:
\begin{align}
\tikzfig{wddotstoboxesFunct1} \quad &=\quad \tikzfig{wddotstoboxesFunct2} \quad
=\quad \tikzfig{wddotstoboxesFunct3} \\
= \tikzfig{wddotstoboxesFunct4} 
&=\quad \tikzfig{wddotstoboxesFunct5}.
\end{align}

Given these functors $\alpha$ and $\beta$, as we discussed above, we can now use these to construct $\cW^C$ operad algebras from $\cW^D$ operad algebras and vice versa. What we want to know, however, is whether anything is lost by doing so.
A simple way to check this is to consider the ``round-trips'' that we can take, e.g., converting an algebra for $\cW^D$ into one for $\cW^C$ and back again and vice versa. If, in both cases, we end up with the same operad algebra that we started with then we know that neither mapping lost anything.
In other words, we want to understand the two possible compositions of the functors $\alpha$ and $\beta$.

The first possibility, namely the composite $\alpha\circ\beta$ is straightforward. Specifically, one finds that 
\beq
\alpha \circ \beta = I_{\cW^{D}}.
\eeq
This is not surprising, $\beta$ artificially adds in extra structure that $\alpha$ then forgets about again, leaving us with what we started with.
What this means that if we map some operad algebra for $\cW^C$ to an algebra for $\cW^D$ and back again, then we end up with the same operad algebra that we started with. 

It is clear, however, that the second composite, $\beta \circ \alpha$ is not so simple, as we have that $\beta \circ \alpha \neq I_{\cW^C}$. This is easy to see, for example, as it is not even injective on objects:
\beq\begin{tikzpicture}
	\begin{pgfonlayer}{nodelayer}
		\node [style=epiBox] (0) at (-5.5, 0) {};
		\node [style=none] (1) at (-5.5, 0.25) {};
		\node [style=none] (2) at (-5.5, -0.25) {};
		\node [style=none] (3) at (-5.5, 1.25) {};
		\node [style=none] (4) at (-5.5, -1.25) {};
		\node [style=right label] (5) at (-5.5, 0.75) {\tiny $B$};
		\node [style=right label] (6) at (-5.5, -0.75) {\tiny $A$};
		\node [style=none] (7) at (-4.5, 0) {};
		\node [style=none] (8) at (-4.5, 0) {,};
		\node [style=epiCopointWide] (9) at (-3.25, 0.5) {};
		\node [style=none] (10) at (-3.5, 0.25) {};
		\node [style=none] (11) at (-4, -0.5) {};
		\node [style=none] (12) at (-3, 0.25) {};
		\node [style=none] (13) at (-2.5, -0.5) {};
		\node [style=right label] (14) at (-3.75, -0.25) {\tiny $A$};
		\node [style=right label] (15) at (-2.75, -0.25) {\tiny $B$};
		\node [style=none] (16) at (-2, 0) {};
		\node [style=none] (17) at (-2, 0) {,};
		\node [style=epiPointWide] (18) at (-1, 0) {};
		\node [style=none] (19) at (-1.75, 0.75) {};
		\node [style=none] (20) at (-0.25, 0.75) {};
		\node [style=none] (21) at (-1.25, 0.25) {};
		\node [style=none] (22) at (-0.75, 0.25) {};
		\node [style=left label] (23) at (-1.5, 0.5) {\tiny $B$};
		\node [style=right label] (24) at (-0.5, 0.5) {\tiny $A$};
		\node [style=none] (25) at (0.5, 0) {};
		\node [style=none] (26) at (0.5, 0) {$\stackrel{\alpha}{\mapsto}$};
		\node [style=big white dot] (27) at (1.75, 0) {};
		\node [style=small black dot] (28) at (1.5, 0) {};
		\node [style=none] (30) at (1.75, -0.25) {};
		\node [style=none] (31) at (1.75, -1.25) {};
		\node [style=none] (32) at (1.75, 0.25) {};
		\node [style=none] (33) at (1.75, 1.25) {};
		\node [style=right label] (34) at (1.75, -0.75) {\tiny $A$};
		\node [style=right label] (35) at (1.75, 0.75) {\tiny $B$};
		\node [style=none] (36) at (3, 0) {$\stackrel{\beta}{\mapsto}$};
		\node [style=epiPointWide] (37) at (5, 0) {};
		\node [style=none] (38) at (4.25, 0.75) {};
		\node [style=none] (39) at (5.75, 0.75) {};
		\node [style=none] (40) at (4.75, 0.25) {};
		\node [style=none] (41) at (5.25, 0.25) {};
		\node [style=left label] (42) at (4.5, 0.5) {\tiny $B$};
		\node [style=right label] (43) at (5.5, 0.5) {\tiny $A$};
	\end{pgfonlayer}
	\begin{pgfonlayer}{edgelayer}
		\draw [qWire] (3.center) to (1.center);
		\draw [qWire] (2.center) to (4.center);
		\draw [qWire, bend left=15] (10.center) to (11.center);
		\draw [qWire, bend right] (12.center) to (13.center);
		\draw [qWire, bend left] (19.center) to (21.center);
		\draw [qWire, bend left] (22.center) to (20.center);
		\draw [qWire] (33.center) to (32.center);
		\draw [qWire] (30.center) to (31.center);
		\draw [qWire, bend left] (38.center) to (40.center);
		\draw [qWire, bend left] (41.center) to (39.center);
	\end{pgfonlayer}
\end{tikzpicture}
\eeq
What we can show, however, is the next best thing. That is, that $\beta\circ\alpha$ and $I_{\cW^C}$ are naturally isomorphic to one another -- there is a natural isomorphism $\eta : I_{\cW^C} \to \beta \circ \alpha$ as we will now demonstrate. As discussed in Def.~\ref{def:natTrans} To define $\eta$ we must define a particular family of operations in $\cW^C$ indexed by the objects in $\cW^C$. In particular, we take these to be the operations that map an input box to an output state, in a way that preserves the planar ordering of the systems, for example:
\beq
\tikzfig{figures/natTran1}.
\eeq
Note that such operations are invertible, for example, the above operation has an inverse:
\beq
\tikzfig{figures/natTran3},
\eeq
hence, if these operations do indeed define a natural transformation, then it is in fact a natural isomorphism.
To show that this is the case, we must show that the following holds for all wirings in $\cW^C$: 
\beq
\tikzfig{figures/natTran4}\ \ =\ \ \tikzfig{figures/natTran5}
\eeq
This is straightforwaqrd to verify. The RHS is immediately equal to:
\beq
\tikzfig{figures/natTran6}
\eeq
from the definition of composition in the operad.
Similarly, we can show that the LHS also is equal to this:
\begin{align}
\tikzfig{figures/natTran7} &= \tikzfig{figures/natTran8}\\
= \tikzfig{figures/natTran9} 
&= \tikzfig{figures/natTran6},
\end{align}
which follows immediately from the definitions of the two functors as well as composition in the operad.

It therefore follows that $\alpha$ and $\beta$ are defining an equivalence between $\cW^C$ and $\cW^D$. Moreover, the natural isomorphism $\eta: \mathds{1}_{\cW^C} \to \beta \circ \alpha$ also implies that any algebra $F:\cW^C \to \textsc{set}$ is naturally isomorphic to the algebra $F\circ \beta \circ \alpha:\cW^C \to \textsc{set}$.

It is worth looking somewhat closer at why this is ``just'' an isomorphism rather than them  being identical. Consider some algebra $F:\cW^C\to\textsc{set}$, this maps the box $\tikzfig{boxAB}$ to a homset $\mathcal{C}_F(A,B)$ and maps the box $\tikzfig{stateAB}$ to $\mathcal{C}_F(I,A\otimes B)$. In contrast, $F\circ \beta \circ \alpha$ maps both boxes to $\mathcal{C}_F(I,A\otimes B)$. At first glance this may seem problematic, but, the fact that this category is compact closed means that we have that $\mathcal{C}_F(A,B)\cong\mathcal{C}_F(I,A\otimes B)$. Therefore, what we are really doing when we move from algebras of $\cW^C$ to $\cW^D$ and back is to identify homsets which were related by the isomorphisms provided by the cups and caps.

\section{Operads with a notion of empty space}
In this section we will see how to give meaning to the adapted operad diagrams of the main text, in which the empty list is considered as a legitimate output, with a unique operation into the empty output referred to as \textit{operadic discarding}. In fact, the process can be pieced together from the literature. Beginning with the adjunctions \cite{yau2008higher, Huot_2019, hermida2012monoidal} \beq
\tikzfig{affine_prop_1}
\eeq 
and noting that $\mathcal{R}_2(\textsc{set}_{ \groy \mathbf{Prop}}) =  \textsc{set}_{ \mathbf{Prop}}$ and $\mathcal{R}_1 (\textsc{set}_{\mathbf{Prop}}) = \textsc{set}_{\mathbf{Op}}$, we can see that affine prop algebras over suitably constructed affine props are equivalent to algebras over underlying operads:
\begin{align*}
\mathbf{Alg}_{\groy \mathbf{Prop}}[\mathcal{L}_2(\mathcal{L}_1(\mathcal{O}))] & := \groy\mathbf{Prop}[\mathcal{L}_2(\mathcal{L}_1(\mathcal{O})) , \textsc{set}_{ \groy \mathbf{Prop}}] \\
& \cong  \mathbf{Prop}[\mathcal{L}_1(\mathcal{O})  , \mathcal{R}_2( \textsc{set}_{ \groy \mathbf{Prop}})] \\
& \cong  \mathbf{Prop}[\mathcal{L}_1(\mathcal{O}) , \textsc{set}_{\mathbf{Prop}}] \\
& \cong \mathbf{Op}[\mathcal{O} , \mathcal{R}_1(\textsc{set}_{\mathbf{Prop}})]  \\
& \cong \mathbf{Op}[\mathcal{O} , \textsc{set}_{\mathbf{Op}}] \\
& \cong \mathbf{Alg}_{\mathbf{Op}}[\mathcal{O}] .
\end{align*}
As we will outline below for clarity, $\mathcal{L}_1$ is the free construction from operads to props, and $\mathcal{L}_2$ outlined below is the free affine completion from props to affine props . On the other hand the $\mathcal{R}_i$ are forgetful, $\mathcal{R}_2$ simply forgets the affine structure of a prop and $\mathcal{R}_1$ forgets the multi-output morphisms of a prop. 
For purely pedagogical purposes, in the remainder of this subsection we will outline the main components of the free constructions $\mathcal{L}_i$.  
\begin{lemma}
    Every operad $\mathcal{O}$ can be used to construct a prop $\mathcal{L}_1[\mathcal{O}]$
\end{lemma}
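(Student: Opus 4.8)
The plan is to realise $\mathcal{L}_1[\mathcal{O}]$ as the \emph{free prop} on the operad $\mathcal{O}$, exploiting that a (coloured) prop is precisely a symmetric strict monoidal category whose objects form the free monoid on the colours of $\mathcal{O}$, and that the only genuinely new data relative to an operad is the possibility of several outputs, which in a free construction can carry no correlation beyond placing single-output operations side by side. Concretely, I would take the objects of $\mathcal{L}_1[\mathcal{O}]$ to be finite lists $(t_1,\dots,t_n)$ of objects (colours) of $\mathcal{O}$, with monoidal product given by concatenation and monoidal unit the empty list. A morphism $(t_1,\dots,t_n)\to(s_1,\dots,s_m)$ I would define as a pair consisting of a function $\phi\colon\{1,\dots,n\}\to\{1,\dots,m\}$ assigning each input wire to an output, together with, for each output $j$, an operation $f_j\in\mathcal{O}\big((t_i)_{i\in\phi^{-1}(j)};s_j\big)$, where $\phi^{-1}(j)$ is listed in increasing order. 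No further quotient is needed, since the symmetric-group action carried by a symmetric operad can be used to absorb any re-ordering of the inputs feeding a given $f_j$.

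Next I would equip this with the prop structure by reducing every operation to that of $\mathcal{O}$. The identity on $(t_1,\dots,t_n)$ is the identity function together with the unit operations $\mathrm{id}_{t_i}$ of $\mathcal{O}$. Composition of $f\colon \vec t\to\vec s$ with $g\colon\vec s\to\vec r$ is defined by operadic substitution: each output $r_k$ is the value of an operation $g_k$ whose inputs are certain $s_j$'s, and each such $s_j$ is itself the output of an operation $f_j$; substituting each $f_j$ into the corresponding input slot of $g_k$ using the composition of $\mathcal{O}$ produces an operation with output $r_k$ and inputs drawn from the $t$'s, while composing the two assignment functions yields the new $\phi$. The monoidal product of morphisms is juxtaposition (disjoint union of the assignment data and of the tuples of operations), and the symmetry $\sigma_{\vec t,\vec s}$ is the wire-permutation morphism built from identity operations.

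Verification of the axioms then reduces, step by step, to the axioms of $\mathcal{O}$. Associativity and unitality of composition follow from the associativity and unit laws of operadic substitution applied output-by-output; functoriality of $\otimes$ and the interchange law are immediate because juxtaposition acts independently on disjoint blocks of wires; and the symmetric-monoidal coherence conditions (naturality of $\sigma$, the hexagon identities, $\sigma^2=\mathrm{id}$) follow from the equivariance of operadic composition under the symmetric-group actions, i.e.\ from $\mathcal{O}$ being a \emph{symmetric} operad. Strictness of the monoidal structure is automatic because concatenation of lists is strictly associative and unital.

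The step I expect to be the main obstacle is the bookkeeping around the symmetric-group actions in composition. After substituting the $f_j$ into the slots of the $g_k$, the inputs of the resulting operation inherit an ordering dictated first by the slot order of $g$ and then by the inputs of each $f_j$, and this must be permuted back to the canonical increasing order of $t$-indices using the $\Sigma$-action of $\mathcal{O}$; one has to check that this renormalisation is performed coherently, so that composition is well defined, associative, and compatible with the symmetry morphisms. This equivariance is exactly where the symmetric (rather than merely planar) structure of $\mathcal{O}$ is essential, and it is where the permutation data implicit in $\phi$ must be tracked carefully. Once it is established, the remaining prop axioms are routine consequences of the operad laws. I would also record, for use in the subsequent $\mathcal{L}_2$ construction, that $\mathcal{L}_1$ is manifestly functorial and is left adjoint to the forgetful functor $\mathcal{R}_1$ that restricts a prop to its single-output operations, matching the adjunction displayed above.
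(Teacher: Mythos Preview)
Your construction is correct and is a standard presentation of the free coloured prop on a symmetric operad. The paper's proof takes a terser and slightly different route: it declares a morphism $[A_{11},\dots,A_{mn_m}]\to[B_1,\dots,B_m]$ to be simply a list $[f_1,\dots,f_m]$ of operations with $f_k:[A_{k1},\dots,A_{kn_k}]\to B_k$, defines composition ``component-wise'' via operadic substitution, takes the monoidal product to be list concatenation, and stipulates that $\mathcal{L}_1[\mathcal{O}](L,[])$ is empty for non-empty $L$ and a singleton for $L=[]$. In effect this is your construction with $\phi$ fixed to be the order-preserving block assignment. Your additional datum $\phi$ is precisely what furnishes the symmetry isomorphisms and makes the prop (rather than merely strict monoidal) structure explicit; the paper leaves this point implicit. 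What your approach buys is a self-contained account of the symmetric structure, at the cost of the $\Sigma$-equivariance bookkeeping you correctly flag as the crux; what the paper's approach buys is brevity. Your closing remark about the adjunction $\mathcal{L}_1\dashv\mathcal{R}_1$ is on point and matches how the paper uses the construction downstream.
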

\begin{proof}
    Objects of $\mathcal{L}_1[\mathcal{O}]$ are given by lists of objects of $\mathcal{O}$. Morphisms of type $[A_{11} \dots A_{mn_{m}}] \rightarrow [B_1 \dots B_m]$ in $\mathcal{L}_1[\mathcal{O}]$ are given by lists $[f_1 \dots f_m]$ of operations in $\mathcal{O}$ with $f_{k}:[A_{k1} \dots A_{kn_{k}}] \rightarrow B_{k}$ in $\mathcal{O}$. Sequential composition is defined component-wise. The monoidal product on both objects and morphisms is given by list concatenation, with monoidal unit given by the empty list $[]$. For any non-empty list $L$ the set $\mathcal{L}_1[\mathcal{O}](L,[])$ is defined to be empty, with $\mathcal{L}_1[\mathcal{O}]([],[])$ a singleton set.
\end{proof}
This assignment lifts to functors, hence, it lifts operad algebras to prop algebras. Functoriality will show that $\mathcal{L}_1[F]: \mathcal{L}_1[\mathcal{O}] \rightarrow \mathcal{L}_1[\textsc{set}]$, for pedagogical purposes we will instead outline the induced algebra $\underline{\mathcal{L}_1}[F]: \mathcal{L}_1[\mathcal{O}] \rightarrow \textsc{set}_{\mathbf{Prop}}$.
\begin{lemma}
Let $F: \mathcal{O} \rightarrow \textsc{set}$ be an operad algebra, then $F$ extends to a morphism $\underline{\mathcal{L}_1}[F]: \mathcal{L}_1[\mathcal{O}] \rightarrow \textsc{set}_{\mathbf{Prop}}$ of props.
\end{lemma}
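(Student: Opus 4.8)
The plan is to write $\underline{\mathcal{L}_1}[F]$ down explicitly on objects and morphisms and then check the prop-morphism axioms, with each verification collapsing to the fact that $F$ is an operad functor together with the universal property of products in $\textsc{set}$. On objects I would send a list $[A_1\dots A_n]$ of objects of $\mathcal{O}$ to the list $[F(A_1)\dots F(A_n)]$, whose underlying object of $\textsc{set}_{\mathbf{Prop}}$ is the product $F(A_1)\times\cdots\times F(A_n)$; the empty list, which is the monoidal unit of $\mathcal{L}_1[\mathcal{O}]$, goes to the empty product $\star$, the monoidal unit of $\textsc{set}_{\mathbf{Prop}}$. A morphism of $\mathcal{L}_1[\mathcal{O}]$ is by definition a list $[f_1\dots f_m]$ of single-output operations $f_k:[A_{k1}\dots A_{kn_{k}}]\to B_k$ of $\mathcal{O}$, and I would send it to the morphism of $\textsc{set}_{\mathbf{Prop}}$ whose $k$-th component is the function $F(f_k):F(A_{k1})\times\cdots\times F(A_{kn_{k}})\to F(B_k)$.

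Next I would verify the axioms. Preservation of identities is immediate: the identity on $[A_1\dots A_n]$ is the list $[\mathds{1}_{A_1}\dots\mathds{1}_{A_n}]$ of operadic identities, and $F(\mathds{1}_{A_k})=\mathds{1}_{F(A_k)}$ because $F$ preserves operad identities, so the product is again the identity. For sequential composition, recall that the $k$-th component of a composite in $\mathcal{L}_1[\mathcal{O}]$ is the operadic composite $g_k(f_{i_1},\dots,f_{i_{q_k}})$ of exactly those operations whose outputs are wired into the inputs of $g_k$; applying $F$ and using that $F$ preserves operad composition turns this into $F(g_k)(F(f_{i_1}),\dots,F(f_{i_{q_k}}))$, which is precisely the $k$-th component of the composite of the image functions in $\textsc{set}_{\mathbf{Prop}}$. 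Preservation of the monoidal product is list concatenation on both sides and so holds on the nose once one invokes the coherence (associativity) of the cartesian product in $\textsc{set}$, and preservation of the unit was already recorded above.

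Finally I would dispatch the only mild subtlety, the hom-sets into the empty list. By the previous lemma $\mathcal{L}_1[\mathcal{O}](L,[])$ is empty for non-empty $L$ and a singleton for $L=[]$, so on the unique morphism $[]\to[]$ the functor acts as the unique map $\star\to\star$, and on the remaining (empty) hom-sets there is nothing to define; in particular no discarding-type maps arise at this stage, as those are supplied only by the later affine completion $\mathcal{L}_2$. I expect the real work to be not any single hard step but getting the bookkeeping right, namely matching the operadic composition internal to $\mathcal{L}_1[\mathcal{O}]$ with the component-wise composition of product functions in $\textsc{set}_{\mathbf{Prop}}$, including the reindexing of the inputs across the various output components. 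Once that correspondence is set up cleanly, every prop-morphism axiom follows formally from functoriality of $F$ and the cartesian structure of $\textsc{set}$.
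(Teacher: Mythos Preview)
Your proposal is correct and follows the same approach as the paper: define $\underline{\mathcal{L}_1}[F]$ on objects by $A\mapsto FA$ and on morphisms by $[f_1\dots f_m]\mapsto Ff_1\times\cdots\times Ff_m$, then check functoriality. The paper simply declares the verification ``routine'' where you have spelled out the bookkeeping for identities, sequential composition, the monoidal product, and the empty-list edge cases.
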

\begin{proof}
On objects define $\underline{\mathcal{L}_1}[F]A = FA$ .
On operations define $\underline{\mathcal{L}_1}[F][f_1 \dots f_m] = Ff_1 \times \dots \times Ff_m$. Functoriality is routine to check.
\end{proof}
We now show how to introduce operadic discarding
\begin{lemma}[Affine Reflection]
    Every monoidal category $\mathcal{M}$ can be used to construct an affine monoidal category $\mathcal{L}_2[\mathcal{M}]$ \cite{Huot_2019}. This construction furthermore preserves props. 
\end{lemma}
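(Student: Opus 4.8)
The plan is to construct $\mathcal{L}_2[\mathcal{M}]$ by freely adjoining a discarding map to the monoidal unit, following \cite{Huot_2019}, and then to read off that the result is affine and, when $\mathcal{M}$ is a prop, again a prop. I would give two equivalent presentations. The first is by generators and relations: $\mathcal{L}_2[\mathcal{M}]$ has the objects and generating morphisms of $\mathcal{M}$ together with one new arrow $\epsilon_A : A \to I$ for each object $A$, subject to the relations of $\mathcal{M}$, the monoidality conditions $\epsilon_I = \mathrm{id}_I$ and $\epsilon_{A\otimes B} = \epsilon_A \otimes \epsilon_B$, and the terminality condition $\epsilon_B \circ f = \epsilon_A$ for every $\mathcal{M}$-morphism $f : A \to B$. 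The inclusion $\iota : \mathcal{M}\to\mathcal{L}_2[\mathcal{M}]$ is then identity-on-objects and strict monoidal by construction. The second, more computational, presentation keeps the same objects and sets
\[
\mathcal{L}_2[\mathcal{M}](A,B) \ := \ \operatorname*{colim}_{Z\in\mathcal{M}} \mathcal{M}(A, B\otimes Z),
\]
with the transition map along $k : Z\to Z'$ given by post-composition with $\mathrm{id}_B\otimes k$; intuitively a morphism is an $\mathcal{M}$-map $h : A\to B\otimes Z$ whose trailing factor $Z$ is slated to be discarded.

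Granting either presentation, terminality of the unit is immediate in the colimit picture: taking $B = I$ gives $\mathcal{L}_2[\mathcal{M}](A,I) = \operatorname{colim}_{Z}\mathcal{M}(A,Z) = \pi_0(A/\mathcal{M})$, the set of connected components of the coslice category under $A$; since $(A,\mathrm{id}_A)$ is initial in $A/\mathcal{M}$ this coslice is connected, so the hom-set is a singleton and $I$ is terminal. I would then record the universal property, namely that for any affine monoidal $\mathcal{A}$ and strong monoidal $F : \mathcal{M}\to\mathcal{A}$ the assignment $\epsilon_A\mapsto(\text{the unique }FA\to I)$ extends to a unique strong monoidal $\tilde F:\mathcal{L}_2[\mathcal{M}]\to\mathcal{A}$ with $\tilde F\iota\cong F$ — the relations hold in $\mathcal{A}$ precisely because its unit is terminal — exhibiting $\mathcal{L}_2$ as left adjoint to the forgetful functor, and hence as the free affine completion.

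I expect the real work, and the main obstacle, to be checking that composition and the monoidal (and, for props, symmetric) structure are well defined on the colimit hom-sets and satisfy the coherence axioms. Concretely, composing $[h : A\to B\otimes Z]$ with $[h' : B\to C\otimes Z']$ yields a representative $A\to C\otimes(Z'\otimes Z)$ by functoriality of $\otimes$, and one must verify that this is independent of the chosen representatives and is associative and unital; similarly $\otimes$ of morphisms collects the discarded factors and must respect interchange. This amounts to the standard \emph{discard-last} normalisation argument — every composite of $\mathcal{M}$-morphisms and discards rewrites to a single $\mathcal{M}$-morphism followed by one discard — which I would carry out using the monoidality and terminality relations together with the interchange law, thereby also reconciling the two presentations.

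Finally, for prop-preservation I would note that the construction is identity-on-objects, so if $\mathrm{ob}(\mathcal{M})=\mathbb{N}$ with $\otimes=+$ then $\mathcal{L}_2[\mathcal{M}]$ has the same objects and monoidal unit $0$; it inherits $\mathcal{M}$'s symmetry, which remains compatible with the new discards because $\epsilon$ is natural and monoidal (so $\epsilon_{A\otimes B}$ matches $\epsilon_{B\otimes A}$ across the symmetry). Thus $\mathcal{L}_2[\mathcal{M}]$ is a symmetric strict monoidal category on $\mathbb{N}$ with terminal unit, i.e.\ an affine prop, and $\mathcal{L}_2$ restricts to a functor $\mathbf{Prop}\to\groy\mathbf{Prop}$, as required.
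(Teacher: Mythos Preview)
Your colimit presentation is exactly the paper's construction: the paper takes $\bigcup_X\mathcal{M}(A,B\otimes X)$ modulo the symmetric--transitive closure of $f\sim g \iff \exists h:(\mathds{1}_B\otimes h)\circ f=g$, which is precisely how one computes $\operatorname*{colim}_{Z\in\mathcal{M}}\mathcal{M}(A,B\otimes Z)$, and your terminality argument via $\pi_0(A/\mathcal{M})$ being a singleton (since $\mathds{1}_A$ is initial in the coslice) is the abstract form of the paper's explicit calculation that every $f:A\to X$ is $\sim$-related to $\mathds{1}_A$. Both you and the paper defer the well-definedness of $\circ$ and $\otimes$ on equivalence classes to a routine check; your additions---the generators-and-relations presentation and the explicit universal property---are correct and useful context but go beyond what the paper records, which simply outlines the quotient construction and cites \cite{Huot_2019} for the adjunction.
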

\begin{proof}
Let us outline the construction, consider the set $\bigcup_{X}\mathcal{M}(A,B \otimes X)$ and the relation \[f \sim g \iff \exists h:X \rightarrow Y \ s.t \ (\mathds{1}_B \otimes h) \circ f = g \] and take the symmetric-then-transitive closure to construct an equivalence relation $\cong$. The morphisms of $\mathcal{L}_2[\mathcal{M}]$ are given by \[  \mathcal{L}_2[\mathcal{M}](A,B) = \nicefrac{\bigcup_{X}\mathcal{M}(A, B \otimes X )}{\cong}.    \] It easy to check that sequential and parallel composition commute with these equivalence classes, so that one can define \[[f] \circ [g] := \left[\tikzfig{figures/affine_completion_2}\right] \] and \[[f] \otimes [g] := \left[\tikzfig{figures/affine_completion_3}\right].\] Note that $\mathcal{L}_2[\mathcal{M}]$ is an affine monoidal category meaning that $ \mathcal{L}_2[\mathcal{M}](A,[])$ is a singleton for every $A$. Indeed, focusing on the case in which $\mathcal{M}$ is a prop for simplicity, morphisms of type $A \rightarrow []$ in $\groy[\mathcal{M}]$ are morphisms of type $A \rightarrow [] \otimes X = X$ where taking the quotient with respect to $\cong$ makes all such morphisms equal since 
\begin{align*}
  & f :  A \rightarrow [] \otimes X \\
  = &  (\mathds{1}_{[]} \otimes \hat{f}) \circ \mathds{1}_{A}  : A \rightarrow [] \otimes X (= X) \\
   \cong & (\mathds{1}_{[]} \otimes \mathds{1}_{A}) \circ \mathds{1}_{A}:  A \rightarrow [] \otimes A (= A). 
\end{align*}
\end{proof}
This unique morphism into the unit is what we refer to in the main text as the \textit{operadic discarding} and denoted as \[
\tikzfig{operadicdisc}. 
\] In this appendix we will choose to represent this morphism with a single black dot $\bullet : A \rightarrow []$ to save space. Now consider the affine completion, we show how to lift algebras, not that morphism of affine props are defined simply to be morphisms of their underlying props, since monoidal functors always preserve affine structure. 
\begin{lemma}
Let $F: \mathcal{M} \rightarrow \textsc{set}_{\mathbf{Prop}}$ be a morphism of props, then $F$ can be extended to a morphism $\underline{\mathcal{L}}_2[F]: \mathcal{L}_2[\mathcal{M}] \rightarrow  \textsc{set}_{ \groy \mathbf{Prop}}$ of affine props.
\end{lemma}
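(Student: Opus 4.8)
The plan is to define $\underline{\mathcal{L}}_2[F]$ explicitly on objects and on equivalence classes of morphisms, then verify well-definedness, functoriality, and preservation of the affine structure; conceptually this is just the hom-set bijection of the adjunction $\mathcal{L}_2 \dashv \mathcal{R}_2$ instantiated at the affine prop $\textsc{set}_{\groy\mathbf{Prop}}$, whose underlying prop is $\mathcal{R}_2(\textsc{set}_{\groy\mathbf{Prop}}) = \textsc{set}_{\mathbf{Prop}}$, but I would spell out its component for the same pedagogical reasons as in the $\mathcal{L}_1$ lemmas. On objects I set $\underline{\mathcal{L}}_2[F]A := FA$. For a morphism $[f]:A\to B$ of $\mathcal{L}_2[\mathcal{M}]$ I pick a representative $f:A\to B\otimes X$ in $\mathcal{M}$ and define
\[
\underline{\mathcal{L}}_2[F][f] := (\mathds{1}_{FB}\otimes\bullet)\circ Ff,
\]
where $Ff:FA\to F(B\otimes X)=FB\times FX$ uses that $F$ is a prop morphism (so it preserves the monoidal product, which in $\textsc{set}_{\mathbf{Prop}}$ is the cartesian product, exactly as in the $\underline{\mathcal{L}_1}[F]$ lemma) and $\bullet:FX\to[]$ is the operadic discarding available in $\textsc{set}_{\groy\mathbf{Prop}}$. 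Concretely this is the function sending $a\in FA$ to the first component of $Ff(a)$.

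The crux is well-definedness, and it is the one step where affineness of the codomain is essential. Since $\cong$ is the symmetric-then-transitive closure of $\sim$, it suffices to check that $f\sim g$ gives equal images. If $g=(\mathds{1}_B\otimes h)\circ f$ for some $h:X\to Y$, then $Fg=(\mathds{1}_{FB}\otimes Fh)\circ Ff$, and
\[
(\mathds{1}_{FB}\otimes\bullet)\circ Fg = (\mathds{1}_{FB}\otimes(\bullet\circ Fh))\circ Ff = (\mathds{1}_{FB}\otimes\bullet)\circ Ff,
\]
the last equality being the defining property of the affine structure of $\textsc{set}_{\groy\mathbf{Prop}}$: the empty list is terminal, so $\bullet\circ Fh=\bullet$ as maps $FX\to[]$. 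Closure under symmetry and transitivity is then immediate. This is the main obstacle in the sense that without affineness of the target the extra wire $X$ could not be absorbed and the assignment would depend on the chosen representative.

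Functoriality then reduces to routine bookkeeping using functoriality of $F$ and the absorption law $\bullet\circ k=\bullet$. The identity $[\mathds{1}_A]$ is represented by $\mathds{1}_A:A\to A\otimes[]$ with $F[]=[]$, whose discard is trivial, so it maps to $\mathds{1}_{FA}$. For sequential and parallel composition I would take the chosen representatives of $[f]\circ[g]$ and $[f]\otimes[g]$ supplied by the Affine Reflection lemma, apply $F$, and push all discards to the right, matching $\underline{\mathcal{L}}_2[F][f]\circ\underline{\mathcal{L}}_2[F][g]$ and $\underline{\mathcal{L}}_2[F][f]\otimes\underline{\mathcal{L}}_2[F][g]$. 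Finally, preservation of the affine structure is automatic, since a monoidal functor always preserves affine structure: the discard of $\mathcal{L}_2[\mathcal{M}]$ is represented by $\mathds{1}_A:A\to[]\otimes A$, which $\underline{\mathcal{L}}_2[F]$ sends to the discard of $FA$, so $\underline{\mathcal{L}}_2[F]$ is indeed a morphism of affine props.
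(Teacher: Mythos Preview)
Your proposal is correct and follows essentially the same approach as the paper: you define $\underline{\mathcal{L}}_2[F]$ on a representative $f:A\to B\otimes X$ by applying $F$ and then discarding the $FX$ component, verify well-definedness by reducing to the generating relation $\sim$ and invoking affineness of the codomain (so that $\bullet\circ Fh=\bullet$), and then check functoriality for sequential and parallel composition. The only difference is presentational---the paper carries out these same steps using string diagrams rather than symbolic equations.
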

\begin{proof}
    One can define \[ \underline{\mathcal{L}}_2[F][f:A \rightarrow B \otimes X]:= \tikzfig{figures/affine_functor_1}.  \] This is well-formed since for each $x \sim y$ then by the affine structure of the codomain $ \underline{\mathcal{L}}_2[F][x] =  \groy[F][y]$.
Now, for each $f \cong g$ (meaning there exists some chain $x_{(i)}$ such that $f \sim x_1 \sim \dots \sim x_n \sim g $) then $F(f) = F(x_1) = \dots = F(x_n) = F(g)$. Functorality is easily checked, for sequential composition we have: \[  \tikzfig{figures/affine_functor_2} \ = \ \tikzfig{figures/affine_functor_3} \] \[ = \ \tikzfig{figures/affine_functor_4} \ = \ \tikzfig{figures/affine_functor_5}.    \] For parallel composition we have \[   \tikzfig{figures/affine_par_1} \ = \ \tikzfig{figures/affine_par_2} \  = \ \tikzfig{figures/affine_par_3}.    \] 
\end{proof}
Given an operad $\mathcal{O}$ and an operad algebra $F : \mathcal{O} \rightarrow \textsc{set}$ we will refer to the affine completion of the free prop on $\mathcal{O}$ and $F$ by $\mathcal{L}_2[\mathcal{L}_1 [\mathcal{O}]]$ and $\underline{\mathcal{L}}_2[ \underline{\mathcal{L}}_1[F]]$ respectively.

\subsection{On discard-wiring operads}
The key motivation for working with affine completions of operads, is to give us operadic discarding with respect to which we can define causality. Let us begin by defining the standard operad $\cW_{\groy}$ in which $\groy$ 
is introduced.
\begin{definition}[Discard Wiring Operad]
    $\cW_{\groy}$ has morphisms given by those of the standard wiring operad along with an additional morphism $\groy$
    denoted \beq
\tikzfig{figures/operadicdisc2}. 
\eeq with operad associativity and no other equations imposed. 
\end{definition}

\begin{lemma}
    Every causal category $\mathcal{C}$ can be used to construct an operad algebra $F: \cW_{\groy} \rightarrow \textsc{set}$ on objects by $F(A,B) = \mathcal{C}(A,B)$ and on morphisms by \beq
\tikzfig{figures/operadicdisc3} = :  \groy \in \mathcal{C}(A,I) 
\eeq
\end{lemma}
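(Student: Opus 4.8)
The plan is to build $F$ on top of the $\cW^A$-algebra that $\mathcal{C}$ already determines, and then extend it across the single new generator $\groy$. Recall from Section~\ref{sec:SMCs} and the subsequent discussion that a symmetric monoidal category --- and in particular a causal one --- yields an algebra $F_{\mathcal{C}}\colon\cW^A\to\textsc{set}$ sending a box with inputs $A_1,\dots,A_n$ and outputs $B_1,\dots,B_m$ to $\mathcal{C}(A_1\otimes\cdots\otimes A_n,\,B_1\otimes\cdots\otimes B_m)$; in particular a one-in one-out box $(A,B)$ goes to $\mathcal{C}(A,B)$ and the box with input $B$ and no output goes to $\mathcal{C}(B,I)$. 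I would take this $F_{\mathcal{C}}$ verbatim as the restriction of $F$ to the sub-operad $\cW^A\subseteq\cW_{\groy}$, so that all that remains is to specify $F$ on $\groy$ and to check that the resulting data assembles into an operad functor.

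First I would pin down $F$ on the discard generator. Viewed operadically, $\groy_B$ is a nullary operation whose output is the box with input $B$ and no output, so $F(\groy_B)$ must be a map $\star\to\mathcal{C}(B,I)$, i.e.\ a choice of element of $\mathcal{C}(B,I)$. Because $\mathcal{C}$ is causal, $I$ is terminal and $\mathcal{C}(B,I)$ is a singleton, so this element exists and is forced to be the unique discarding effect $\groy\in\mathcal{C}(B,I)$. This is precisely the assignment recorded in the statement, and crucially it involves no arbitrary choice.

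Next I would extend $F$ to all of $\cW_{\groy}$ and verify functoriality. Since $\cW_{\groy}$ is $\cW^A$ with $\groy$ freely adjoined and \emph{no equations beyond operad associativity}, every wiring is a composite of standard wirings and copies of $\groy_B$, and defining $F$ of a composite to be the composite of the $F$-images determines $F$ uniquely; preservation of identities is inherited from $F_{\mathcal{C}}$, as $\groy$ is not an identity. The genuine obligation is \textbf{well-definedness}: two decompositions of the \emph{same} wiring must receive the same function. For the purely $\cW^A$ part this is exactly the established behaviour of $F_{\mathcal{C}}$. The new content arises when several wires are discarded, possibly alongside surviving outputs; here I would reduce consistency to the coherence of discarding in a causal category, namely that $\groy_B\circ f=\groy_A$ for any $f\in\mathcal{C}(A,B)$, that $\groy_{A\otimes B}=\groy_A\otimes\groy_B$, and that discards commute with the symmetries. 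Each of these is an immediate consequence of terminality of $I$, since any two parallel maps into $I$ coincide. Feeding these identities into the $\cW^A$ functoriality then shows that $F$ of a partial-discard wiring, computed by post-composing the relevant morphism with the appropriate discarding effects, is independent of the chosen decomposition.

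I expect the main obstacle to be exactly this well-definedness step: ensuring that reorderings and regroupings of discards, and their interaction with ordinary wirings, never produce conflicting values of $F$. The point to keep in mind --- and the reason causality rather than mere monoidality is required --- is that terminality of $I$ is what collapses all these potentially different discard-composites to a single canonical effect. Once that is observed, functoriality follows from the already-established behaviour of $F_{\mathcal{C}}$ on $\cW^A$ together with the operad axioms, and the lemma is proved.
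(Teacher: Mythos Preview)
The paper states this lemma without proof, treating it as immediate. Your argument is correct and would constitute a valid proof, but you are doing strictly more work than the statement requires, and the extra work stems from a small conflation of $\cW_{\groy}$ with $\cW^{\groy}$.

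You correctly recognise that $\cW_{\groy}$ is $\cW^A$ with $\groy$ freely adjoined ``with operad associativity and no other equations imposed''. But then freeness already settles the well-definedness issue you go on to worry about: by the universal property of a free extension, specifying an algebra of $\cW_{\groy}$ is \emph{exactly} specifying an algebra of $\cW^A$ together with a value for each generator $\groy_B$ in the appropriate hom-set. There are no further relations in $\cW_{\groy}$ to be respected, so no consistency checks are owed. In particular, the identity $\groy_B\circ f=\groy_A$ that you verify is \emph{not} an equation of $\cW_{\groy}$; it is precisely the equation that gets imposed when passing to the causal wiring operad $\cW^{\groy}$, and it is there (in the subsequent theorem) that causality of $\mathcal{C}$ is genuinely needed. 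For the present lemma, causality enters only in the mild way you note first: it makes $\mathcal{C}(B,I)$ a singleton, so the value $F(\groy_B)$ is forced to be the discarding effect. Once that choice is made, the algebra exists by freeness with nothing further to check.
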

We want to impose the following equation \beq
\tikzfig{discexample} = \tikzfig{operadicdisccondition}
\eeq 
to construct the causal wiring operad, but it isn't apriori clear that we can just ``impose an equation'' and still have a monoidal category. We confirm that this is in fact always possible, to do so we will need to construct an equivalence relation which plays well with sequential and parallel composition. 
\begin{definition}[Monoidal Congruence]
Let $\mathcal{C}$ be a monoidal category, an equivalence relation $\cong$ is a monoidal congruence on $\mathcal{C}$ if 
\begin{itemize}
    \item $f \cong g \wedge f' \cong g' \implies f \circ f' \cong g \circ g' $
    \item $f \cong g \wedge f' \cong g' \implies f \otimes f' \cong g \otimes g' $
\end{itemize}
Whenever such expression are well-typed. 
\end{definition}

\begin{theorem}[Monoidal Congruence Categories]
Every monoidal congruence $\cong$ on a monoidal category $\mathcal{C}$ be used to construct a monoidal category $\nicefrac{\mathcal{C}}{\cong}$ with \[  \nicefrac{\mathcal{C}}{\cong}(A,B) := \nicefrac{\mathcal{C}(A,B)}{\cong}.   \] All compositional structure is inherited from $\mathcal{C}$.
\end{theorem}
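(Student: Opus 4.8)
The plan is to build $\nicefrac{\mathcal{C}}{\cong}$ as an ordinary quotient category and then check that the monoidal structure descends functorially. First I would take the objects of $\nicefrac{\mathcal{C}}{\cong}$ to be exactly those of $\mathcal{C}$, and its hom-sets to be $\nicefrac{\mathcal{C}(A,B)}{\cong}$ as prescribed, writing $[f]$ for the class of a morphism $f$. Sequential composition is defined on representatives by $[g]\circ[f]:=[g\circ f]$, with identity $[\mathds{1}_A]$ on $A$. The only genuine thing to verify is well-definedness: if $f\cong f'$ and $g\cong g'$ then $g\circ f\cong g'\circ f'$, which is precisely the first clause in the definition of a monoidal congruence. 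Associativity and the unit laws for $\circ$ then follow at once by applying the corresponding laws of $\mathcal{C}$ to chosen representatives, so $\nicefrac{\mathcal{C}}{\cong}$ is a category and the assignment $Q:\mathcal{C}\to\nicefrac{\mathcal{C}}{\cong}$ sending $A\mapsto A$, $f\mapsto[f]$ is an identity-on-objects functor.

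Next I would install the monoidal product. On objects I reuse the product already present in $\mathcal{C}$; on morphisms I set $[f]\otimes[g]:=[f\otimes g]$, whose well-definedness is exactly the second clause of the congruence definition. Bifunctoriality of $\otimes$, i.e.\ its compatibility with $\circ$ and with identities (the interchange law), is inherited by applying the interchange law of $\mathcal{C}$ to representatives. The coherence data, namely the associator $\alpha$ and the unitors $\lambda,\rho$, are obtained by pushing the corresponding isomorphisms of $\mathcal{C}$ through $Q$: the component at $(A,B,C)$ is $[\alpha_{A,B,C}]$, and similarly for the unitors. Because $Q$ preserves composition and identities, $[\alpha_{A,B,C}^{-1}]$ is a two-sided inverse of $[\alpha_{A,B,C}]$, so these stay isomorphisms, and their naturality squares commute because the underlying squares commute in $\mathcal{C}$ at the level of representatives.

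Finally I would dispatch the coherence axioms. The pentagon and triangle identities are equalities of morphisms in $\mathcal{C}$; applying $Q$ to both sides turns them into the required equalities of equivalence classes, so they hold automatically in $\nicefrac{\mathcal{C}}{\cong}$. This simultaneously exhibits $Q$ as a strict, full, identity-on-objects monoidal functor, which is the precise content of the phrase ``all compositional structure is inherited from $\mathcal{C}$''.

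I do not anticipate a real obstacle: the whole point of the theorem is that the two congruence clauses are exactly what is needed to make $\circ$ and $\otimes$ well-defined on classes, after which every equational axiom descends through $Q$. The only step deserving a moment's care is the non-strict case, where one must confirm that the coherence isomorphisms and their naturality genuinely survive the quotient rather than collapsing; in the application of interest $\mathcal{C}$ is a prop and hence strict, so even this subtlety evaporates and the construction reduces to the bare quotient of hom-sets with composition and tensor defined representative-wise.
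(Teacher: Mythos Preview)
Your proposal is correct and follows essentially the same approach as the paper: define $[g]\circ[f]:=[g\circ f]$ and $[f]\otimes[g]:=[f\otimes g]$ and observe that well-definedness is precisely what the two congruence clauses guarantee. In fact your write-up is considerably more thorough than the paper's, which consists of those two definitions and a one-line remark on well-formedness; your additional checks on coherence data and the non-strict case are sound but go beyond what the paper records (and, as you note, are unnecessary in the prop setting the paper actually uses).
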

\begin{proof}
    One can define $[f] \otimes [g] := [f \otimes g]$ and $[f] \circ [g] := [f \circ g]$, with $[f]$ these equivalence class on $f$ with respect to $\cong$. These expression are well formed whenever $\cong$ is a monoidal congruence. 
\end{proof}
In light of this theorem, we see that if we can construct a monoidal congruence from an equation we want to impose, we will be able to construct a new monoidal category in which that new equation is satisfied. 

\begin{theorem}[From Equations to Monoidal Congruence]
Let $h,h'$ be two morphisms of a monoidal category and we want to impose the equation $h=h'$. To do so, let us define a relation by $f \approx g \iff \exists a_{i}, z_{i}, z_{i}'$ such that \[ f = \tikzfig{figures/quotient_1}  \] and \[ g = \tikzfig{figures/quotient_2},  \] where for each $i$ then either $z_i = h$ and $z_i ' = h'$ or $z_i = h'$ and $z_i ' = h$. The transitive closure $\cong$ of $\approx$ is a monoidal congruence. We will refer to the resulting quotient monoidal category $\nicefrac{\mathcal{C}}{\cong}$ as the monoidal category in which equation $h = h'$ has been imposed. 
\end{theorem}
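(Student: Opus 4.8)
The plan is to establish three things in turn: that $\cong$ is an equivalence relation, that it is compatible with each composition operation when only one argument is varied, and finally that it satisfies the two full monoidal-congruence axioms. First I would dispose of the equivalence-relation claim. The relation $\cong$ is transitive by fiat, being a transitive closure, so it remains only to check reflexivity and symmetry of the underlying relation $\approx$ and note these pass to the closure. Reflexivity is witnessed by the empty family of insertions (the context being $f$ itself, with nothing replaced), giving $f \approx f$ for every $f$. Symmetry is immediate from the defining clause: the condition ``$z_i = h$ and $z_i' = h'$, or $z_i = h'$ and $z_i' = h$'' is invariant under exchanging the roles of $z_i$ and $z_i'$, so any witness for $f \approx g$ becomes a witness for $g \approx f$ by swapping the two plugged-in families. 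Hence $\approx$ is reflexive and symmetric, and its transitive closure $\cong$ is a genuine equivalence relation.

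Second, and this is the technical heart, I would prove a one-sided compatibility lemma for $\approx$: whenever $f \approx g$ and $k$ is a morphism for which the relevant composite is well-typed, then $k \circ f \approx k \circ g$, $f \circ k \approx g \circ k$, $k \otimes f \approx k \otimes g$, and $f \otimes k \approx g \otimes k$. The idea is that a witness for $f \approx g$ presents $f$ and $g$ as the \emph{same} surrounding diagram --- built from the generic morphisms $a_i$ together with a fixed wiring --- into which one plugs either the family $(z_i)$ or the family $(z_i')$. Pre- or post-composing with $k$, or tensoring $k$ alongside, simply enlarges this surrounding diagram (absorbing $k$ into the list of $a_i$'s and extending the wiring) while leaving the plugged-in families and their $h/h'$ labelling untouched; this is exactly a witness for the corresponding $\approx$-relation between the composites. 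These one-sided statements then pass verbatim to the transitive closure $\cong$: a chain $f = f_0 \approx \cdots \approx f_n = g$ yields a chain $k \circ f_0 \approx \cdots \approx k \circ f_n$, hence $k \circ f \cong k \circ g$, and likewise for the other three operations.

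Finally I would assemble the two congruence axioms from one-sided compatibility and transitivity by an interpolation argument. Given $f \cong g$ and $f' \cong g'$ with $f \circ f'$ well-typed, one-sided compatibility gives $f \circ f' \cong g \circ f'$ (varying the left factor, keeping $f'$ fixed) and $g \circ f' \cong g \circ g'$ (varying the right factor, keeping $g$ fixed), and transitivity of $\cong$ yields $f \circ f' \cong g \circ g'$; one checks along the way that the intermediate composite $g \circ f'$ is well-typed, which follows since $f \cong g$ forces $f$ and $g$ to share a type. The identical two-step interpolation $f \otimes f' \cong g \otimes f' \cong g \otimes g'$ gives the monoidal axiom. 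This is exactly the data required by the preceding Monoidal Congruence Categories theorem, so the quotient $\nicefrac{\mathcal{C}}{\cong}$ is a well-defined monoidal category; moreover, taking the single-slot context with $z_1 = h$ and $z_1' = h'$ shows $h \approx h'$, so $h = h'$ indeed holds in $\nicefrac{\mathcal{C}}{\cong}$.

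The step I expect to be the main obstacle is the one-sided compatibility lemma, since it is the only place where the precise diagrammatic meaning of the witnessing data in \tikzfig{figures/quotient_1} and \tikzfig{figures/quotient_2} matters: one must verify that the ``context together with a simultaneous batch of $h \leftrightarrow h'$ replacements'' presentation really is stable under enlarging the context on either side and in parallel, so that an extra morphism $k$ can always be absorbed into the surrounding wiring. Everything downstream --- the equivalence-relation bookkeeping, the lifting along transitive closures, and the interpolation --- is then purely formal.
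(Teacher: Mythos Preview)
Your proposal is correct and follows essentially the same approach as the paper: establish compatibility of $\approx$ with composition by absorbing the extra morphism into the surrounding context, lift to $\cong$ along $\approx$-chains, and assemble the two-sided congruence via interpolation. The only organisational difference is that the paper proves two-sided $\approx$-compatibility for $\circ$ directly (stacking the two decompositions) and then reduces $\otimes$-compatibility to the special case $\mathds{1}\otimes f\otimes\mathds{1}$ together with $\circ$-compatibility via the interchange law $f\otimes g=(f\otimes\mathds{1})\circ(\mathds{1}\otimes g)$, whereas you treat $\circ$ and $\otimes$ uniformly through your one-sided lemma; your route is slightly more modular, the paper's slightly shorter for $\otimes$, but the substance is the same.
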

\begin{proof}
    We need to prove that when $f \cong g$ and $f' \cong g'$ then $f \circ f' \cong g \circ g'$ and $f \otimes f' \cong g \otimes g'$. It is sufficient the check for sequential composition and to check that $f \cong g \implies \mathds{1} \otimes f \otimes \mathds{1} \cong \mathds{1} \otimes g \otimes \mathds{1}$
    For the sequential composition check, note first that $f \approx g$ and $f' \approx g'$ implies that $f \circ  f' \approx g \circ g'$, indeed the relevant decomposition of $f \circ f'$ is given by separately decomposing $f$ and $f'$, similarly for $g \circ g'$. Now for the transitive closure, note that $f \cong g$ and $f' \cong g'$ implies that $f \approx x_1 \dots \approx x_n \approx g$ and $f' \approx y_1 \dots \approx y_n \approx g'$. Now note that using the compatibility of $\approx$ with sequential composition we have 
    \begin{align*}
    f \circ f' &  \approx f \circ y_1 
     \approx \dots \approx f \circ y_m 
     \approx f \circ g' 
     \approx x_1 \circ g' 
     \dots \approx x_n \circ g' 
     \approx f' \circ g'.
    \end{align*}
    For parallel composition note that $f \approx g \implies \mathds{1} \otimes f \otimes \mathds{1} \approx \mathds{1} \otimes g \otimes \mathds{1}$. Now consider $f \cong g$ note that $ f \approx x_1 \dots \approx x_n \approx g$ and so $f \otimes \mathds{1} \approx x_1 \otimes \mathds{1} \dots \approx x_n \otimes \mathds{1} \approx g \otimes \mathds{1}$ and similarly on the left hand side, this completes the proof. 
\end{proof}

\begin{lemma}
    Let $\cong$ be a monoidal congruence on $\mathcal{C}$ and let $F: \mathcal{C} \rightarrow \mathcal{D}$ be a morphism of props such that \[ f \cong g \implies F(f) = F(g),     \] then $F$ can be extended to a morphism of props $\nicefrac{F}{\cong}: \nicefrac{\mathcal{C}}{\cong} \rightarrow \mathcal{D}$.
\end{lemma}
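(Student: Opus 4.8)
The plan is to invoke the universal property of the quotient: because the hypothesis $f \cong g \implies F(f) = F(g)$ says precisely that $\cong$ sits inside the ``kernel'' of $F$, the morphism $F$ should factor through $\nicefrac{\mathcal{C}}{\cong}$ by acting on representatives. First I would define $\nicefrac{F}{\cong}$ on objects by $\nicefrac{F}{\cong}(A) := F(A)$, which is forced since passing to the quotient (as constructed in the Monoidal Congruence Categories theorem) leaves objects untouched. On morphisms I would define it by picking any representative, setting $\nicefrac{F}{\cong}([f]) := F(f)$.

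The one genuinely load-bearing step is well-definedness of this assignment on morphisms, and it is exactly here that the hypothesis is used: if $[f] = [g]$, i.e. $f \cong g$, then by assumption $F(f) = F(g)$, so the value is independent of the chosen representative. Every other verification is inherited rather than proved afresh. Indeed, since composition and monoidal product in $\nicefrac{\mathcal{C}}{\cong}$ are defined on representatives by $[g] \circ [f] = [g \circ f]$ and $[f] \otimes [g] = [f \otimes g]$, each prop-morphism axiom for $\nicefrac{F}{\cong}$ collapses to the corresponding axiom for $F$. Concretely, functoriality follows from $\nicefrac{F}{\cong}([g] \circ [f]) = F(g \circ f) = F(g) \circ F(f)$, preservation of identities from $F(\mathds{1}_A) = \mathds{1}_{F(A)}$, and preservation of the monoidal product and unit from $F(f \otimes g) = F(f) \otimes F(g)$ together with $F$ preserving $\otimes$ on objects.

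I do not expect any serious obstacle beyond articulating the well-definedness step cleanly, precisely because the compositional structure of $\nicefrac{\mathcal{C}}{\cong}$ is literally the structure of $\mathcal{C}$ read on equivalence classes. The proof will therefore be short: establish well-definedness from the kernel hypothesis, and then observe that all of the prop-morphism conditions for $\nicefrac{F}{\cong}$ reduce immediately to those already known for $F$.
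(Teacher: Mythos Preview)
Your proposal is correct and essentially identical to the paper's own proof: define $\nicefrac{F}{\cong}$ on representatives, check well-definedness via the hypothesis $f \cong g \implies F(f) = F(g)$, and observe that functoriality and monoidality are inherited directly from $F$ because composition and tensor in the quotient are defined on representatives. The paper spells out the sequential-composition case and says parallel composition follows similarly; you additionally mention identities and the unit explicitly, but the substance is the same.
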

\begin{proof}
Define $\nicefrac{F}{\cong}[f] := F(f)$, this is well formed since if $[f] = [g]$ then $F[f] = F[g]$. Note that functorality is directly inherited from $F$:
\begin{align*}
    \nicefrac{F}{\cong}([f] \circ [g]) & = \nicefrac{F}{\cong}([f \circ g]) \\
    & = F(f \circ g)\\
    & = F(f) \circ F(g) \\
    & = \nicefrac{F}{\cong}([f]) \circ \nicefrac{F}{\cong}([g]).
\end{align*}
Compatibility with parallel composition follows similarly. 
\end{proof}

\begin{theorem}
    Let $F: \mathcal{C} \rightarrow \mathcal{D}$ be a mophism of props and let $h,h'$ be such that $F(h) = F(h')$, then $F$ can be extended to a morphism of props $\nicefrac{F}{\cong(\approx)}: \nicefrac{\mathcal{C}}{\cong(\approx)} \rightarrow \mathcal{D}$. 
\end{theorem}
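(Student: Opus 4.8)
The plan is to reduce this statement to the Lemma immediately above it, which says that any morphism of props respecting a monoidal congruence $\cong$ descends to the quotient $\nicefrac{\mathcal{C}}{\cong}$. By the theorem ``From Equations to Monoidal Congruence'', we already know that the transitive closure $\cong(\approx)$ of $\approx$ is a genuine monoidal congruence, so the quotient prop $\nicefrac{\mathcal{C}}{\cong(\approx)}$ exists and carries all compositional structure inherited from $\mathcal{C}$. Consequently the only thing left to verify is the hypothesis of that Lemma for our particular $F$ and our particular congruence, namely that $f \cong(\approx) g$ implies $F(f) = F(g)$. Once that is in hand, the extension is produced by the Lemma verbatim.

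First I would check that $F$ respects the one-step relation $\approx$. Recall that $f \approx g$ means that $f$ and $g$ admit decompositions built from the same surrounding data $a_i$ in which each slot is filled either by $h$ or by $h'$, with $f$ and $g$ differing only by swapping $h \leftrightarrow h'$ in those slots. Since $F$ is a morphism of props it preserves both sequential and parallel composition, so $F(f)$ and $F(g)$ are each obtained by applying $F$ to the common context assembled from the $a_i$ and to the chosen slot-fillers. As $F(h) = F(h')$ by hypothesis, the images of the two slot-fillers coincide in every slot, and hence $F(f) = F(g)$.

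Next I would bootstrap this to the transitive closure. If $f \cong(\approx) g$, then by definition there is a finite chain $f \approx x_1 \approx \cdots \approx x_n \approx g$. Applying the previous paragraph to each link gives $F(f) = F(x_1) = \cdots = F(x_n) = F(g)$, so $F$ is constant on $\cong(\approx)$-classes. This is exactly the premise the preceding Lemma requires, and applying it yields the desired morphism of props $\nicefrac{F}{\cong(\approx)}: \nicefrac{\mathcal{C}}{\cong(\approx)} \to \mathcal{D}$ defined on classes by $\nicefrac{F}{\cong(\approx)}[f] := F(f)$, well defined precisely by the constancy just established and functorial by direct inheritance from $F$.

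The main (and essentially only) obstacle is the first step: confirming that $F$ takes the same value on $\approx$-related pairs. This is where the hypothesis $F(h) = F(h')$ is used, together with the fact that $F$ is a prop morphism and therefore commutes with the contextual composites appearing in the definition of $\approx$. Everything after that is bookkeeping: the passage to the transitive closure is a trivial induction on the length of the chain, and the construction of the quotient and the well-definedness and functoriality of $\nicefrac{F}{\cong(\approx)}$ are supplied wholesale by the two results that precede the theorem.
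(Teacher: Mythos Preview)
Your proposal is correct and follows essentially the same approach as the paper: both reduce to the preceding Lemma by verifying that $F$ is constant on $\cong(\approx)$-classes, first handling the one-step relation $\approx$ via functoriality of $F$ together with $F(h)=F(h')$, and then passing to the transitive closure by chaining. The only cosmetic difference is the order of presentation (the paper states the transitive-closure reduction before the one-step argument, whereas you do the reverse).
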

\begin{proof}
    We need to show that since $F(h) = F(h')$, it follows that whenever $f \cong g$ then $F(f) = F(g)$. Note that when $f \cong g$ there exists some sequence $x_{(k)}$ of processes such that $f \approx x_1 \approx \dots \approx x_{n} \approx g$, so, if we can show that $F(y) = F(z)$ whenever $y \approx z$ then we have $F(f) = F(x_1) = \dots = F(x_n) = F(g)$. For any $y,z$ such that $y \approx z$ then there exists decomposition in terms of $a_{(i)},z_{(i)},z_{(i)}'$ such that $F(y)$ is equal to \[   \tikzfig{figures/affine_sequence_functor_1}    \] by functorality is equal to
    \[  \ \tikzfig{figures/affine_sequence_functor_2} ,   \]
    which by the property $F(h) = F(h')$ is equal to
    \[    \tikzfig{figures/affine_sequence_functor_3},    \]
    which again by functorality is given by
    \[   \tikzfig{figures/affine_sequence_functor_4},    \] in other words, $F(y)$ is equal to $F(z)$.
\end{proof}

\begin{definition}[The Causal Wiring Operad]
    The causal wiring operad $\cW^{\groy}$ introduced in the main text, is given by imposing the following equation: 
 \beq
\tikzfig{discexample} = \tikzfig{operadicdisccondition}
\eeq 
On the affine completion of the discard wiring operad. 
\end{definition}

We wish to show that every causal category is an algebra for the causal wiring operad. 

\begin{theorem}
    Every causal category defines an algebra $F:\cW^{\groy} \rightarrow \textsc{set}$
\end{theorem}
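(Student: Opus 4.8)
The plan is to obtain the required algebra by assembling the general machinery built up in this appendix, with causality of $\mathcal{C}$ entering at exactly one point. Recall that $\cW^{\groy}$ was defined as the quotient of the affine completion $\mathcal{L}_2[\mathcal{L}_1[\cW_{\groy}]]$ of the discard wiring operad by the monoidal congruence generated by the causality equation \eqref{eq:operadicCausality}. Accordingly it suffices to (i) produce an algebra on $\cW_{\groy}$, (ii) lift it through the affine completion, and (iii) check that the lifted algebra identifies the two sides of the imposed equation, so that it descends to the quotient.

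First I would invoke the preceding lemma, which takes a causal category $\mathcal{C}$ and builds an operad algebra $F:\cW_{\groy}\to\textsc{set}$ with $F(A,B):=\mathcal{C}(A,B)$ and the discarding diagram $\groy_B$ sent to the discarding effect $\groy_B\in\mathcal{C}(B,I)$. By the adjunction chain computed above --- in particular the equivalence $\mathbf{Alg}_{\groy\mathbf{Prop}}[\mathcal{L}_2(\mathcal{L}_1(\cW_{\groy}))]\cong\mathbf{Alg}_{\mathbf{Op}}[\cW_{\groy}]$, together with the lemmas describing $\underline{\mathcal{L}}_1$ and $\underline{\mathcal{L}}_2$ --- this algebra extends uniquely to a morphism of affine props $\hat{F}:=\underline{\mathcal{L}}_2[\underline{\mathcal{L}}_1[F]]:\mathcal{L}_2[\mathcal{L}_1[\cW_{\groy}]]\to\textsc{set}_{\groy\mathbf{Prop}}$. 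Concretely, operadic discarding $\gro$ is forced to be the unique function into the singleton $\star$, since $\star$ is terminal in $\textsc{set}$; no additional choices are needed at this stage.

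The single place where causality of $\mathcal{C}$ is actually used is in verifying the hypothesis of the congruence-lifting theorem. Writing $h,h'$ for the two sides of \eqref{eq:operadicCausality}, I would evaluate their images under $\hat{F}$ on a box $f\in\mathcal{C}(A,B)$: the left-hand side $\hat{F}(h)$ returns the composite $\groy_B\circ f\in\mathcal{C}(A,I)$, whereas the right-hand side $\hat{F}(h')$ applies operadic discarding to $f$ --- thereby forgetting it --- and returns $\groy_A\in\mathcal{C}(A,I)$. Equality $\groy_B\circ f=\groy_A$ for every $f$ is precisely the statement that $I$ is terminal in $\mathcal{C}$, i.e.\ that $\mathcal{C}$ is causal; the analogous multi-box condition \eqref{eq:operadDiscardCond} is then checked in the same way (or deduced from this one together with functoriality). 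Hence $\hat{F}(h)=\hat{F}(h')$.

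Finally, since $\hat{F}$ is a morphism of props satisfying $\hat{F}(h)=\hat{F}(h')$, the theorem on lifting morphisms through a monoidal congruence factors it through the quotient, yielding $\nicefrac{\hat{F}}{\cong}:\cW^{\groy}=\nicefrac{\mathcal{L}_2[\mathcal{L}_1[\cW_{\groy}]]}{\cong}\to\textsc{set}_{\groy\mathbf{Prop}}$, which under the identification explained in the main text is the desired algebra $F:\cW^{\groy}\to\textsc{set}$. I expect essentially all of the proof to be bookkeeping discharged by the general lemmas, so the only content-bearing step is the identification $\hat{F}(h)=\hat{F}(h')$ above. The main point requiring care is therefore not a deep obstacle but rather the routine matching of the affine-completion data with the diagrammatic discarding operations; causality of $\mathcal{C}$ supplies exactly the equation demanded by the quotient construction.
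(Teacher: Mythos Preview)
Your proposal is correct and follows essentially the same approach as the paper: start from the operad algebra $F:\cW_{\groy}\to\textsc{set}$ supplied by the preceding lemma, observe that under the (lifted) algebra the two sides of the imposed causality equation \eqref{eq:operadicCausality} are identified precisely because $\mathcal{C}$ is causal, and then invoke the congruence-lifting theorem to descend to $\cW^{\groy}$. The paper compresses your steps (i) and (ii) into a single sentence, leaving the passage through the affine completion implicit, but the logic is identical; one small remark is that condition \eqref{eq:operadDiscardCond} is the affinity of operadic discarding and is built into the construction of $\mathcal{L}_2$, so it does not need to be verified separately.
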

\begin{proof}
    The operadic algebra we have already constructed on the standard discarding wiring operad satisfies: \[   \tikzfig{opDiscAlg5} 
 =  \tikzfig{opDiscAlg6}. \] Consequently the previous theorem tells us that it extends to an algebra $F:\cW^{\groy} \rightarrow \textsc{set}$.
\end{proof}

\section{Polycategories With Space}

For the following let $\mathcal{P}_{\times }$ be the polycategory with objects given by those of $\mathcal{P}$ and with polymorphisms $p: L \rightarrow R$ given by a permutation $\sigma_{L}: L_1 \cup L_2 \rightarrow L$ for $L$ and analogously for $R$ along with pairs of processes $p_1 :L_1 \rightarrow R_1,p_2: L_2 \rightarrow R_2$ from $\mathcal{P}$. The polycategorical composition rule is inherited directly from polycategory composition of $\mathcal{P}$ up to keeping track of permutations. 
\begin{theorem}[Polycategories with space]
A polycategory with space is a polycategory equipped with an identity-on-objects polyfunctor $\otimes: \mathcal{P}_{\times} \rightarrow \mathcal{P} $ such that $(f \otimes g) \otimes h = f \otimes (g \otimes h)$. 
\end{theorem}
\begin{proof} (Sketch) 
Let us begin by noting that unpacking the polyfunctor returns parallel composition rules \[\begin{tikzcd}
	{\mathbf{P}([a_{(i)}] \star [c_{(k)}] , [b_{(j)}] \star [d_{(l)}])} \\
	{\mathbf{P}([a_{(i)}] , [b_{(j)}]) \times \mathbf{P}([c_{(k)}] , [d_{(l)}]) }
	\arrow["{\otimes_{a_{(i)}b_{(j)}c_{(k)}d_{(l)}}}", from=2-1, to=1-1]
\end{tikzcd}\] along with interchange laws (from poly-functorality of $\otimes$) such as the following commutative diagram \jnote{This diagram doesn't fit very well, but messing around with the scale option here doesn't seem to do anything!}
\[\begin{tikzcd}[scale=0.1]
	& {\mathbf{P}([e_{(m)}]x[e'_{(m')}] , [f_{(n)}]) \times \mathbf{P}([a_{(i)}] , [b_{(j)}]x[b'_{(j')}]) \times \mathbf{P}([c_{(k)}] , [d_{(l)}]) } \\
	{\mathbf{P}([e_{(m)}]x[e'_{(m')}] , [f_{(n)}]) \times\mathbf{P}([a_{(i)}] \star [c_{(k)}] , [b_{(j)}]x[b'_{(j')}] \star [d_{(l)}])} \\
	{ \times\mathbf{P}([e_{(m)}] \star [a_{(i)}] \star [c_{(k)}]\star [e'_{(m')}] , [b_{(j)}]\star [f_{(n)}] \star[b'_{(j')}] \star [d_{(l)}])} \\
	{ \mathbf{P}([e_{(m)}] \star [a_{(i)}] \star [e'_{(m')}] \star [c_{(k)}] , [b_{(j)}] \star [f_{(n)}] \star [b'_{(j')}] \star [d_{(l)}])  } \\
	& { \mathbf{P}([e_{(m)}] \star [a_{(i)}] \star [e'_{(m')}] , [b_{(j)}] \star [f_{(n)}] \star [b'_{(j')}]) \times \mathbf{P}([c_{(k)}] , [d_{(l)}]) }
	\arrow["\otimes", from=1-2, to=2-1]
	\arrow["{{\circ_{x}}}"', from=1-2, to=5-2]
	\arrow[from=2-1, to=3-1]
	\arrow["{{\mathbf{P}[\sigma, i]}}", from=3-1, to=4-1]
	\arrow["\otimes"', from=5-2, to=4-1].
\end{tikzcd}\]
The above commutative diagram is for post-composition on the left-hand side, equivalent diagrams are constructed for pre/post-composition on the left/right-hand side for full generality, and are constructed in a totally analogous way. 

Now, a proof that algebras of the acyclic wiring operad give (symmetric) polycategories can be inherited directly from \cite{yau2018operads}, however it is instructive to briefly outline the point. What must be constructed is $\circ_x$ composition which can be constructed as follows: \beq
\tikzfig{poly_proof_1} 
\eeq
Associativity laws for polycategories then follow directly from functorality of the algebra. For the parallel composition rule, note that we can construct the desired $\otimes$ functions as follows: \beq
\tikzfig{poly_proof_2} 
\eeq
Associativity of spatial composition then follows as
\begin{align*}
& \tikzfig{poly_proof_3} \\
= \quad & \tikzfig{poly_proof_5} \\
= \quad  &  \tikzfig{poly_proof_4} 
\end{align*}
and the interchange laws arise from using functorality on each side of the interchange law (be it left/right and pre/post) to reach a normal form \beq
\tikzfig{poly_proof_6}.
\eeq
Now to go the other way, note that in analogy to the argument of \cite{patterson2021wiring}, by induction, one can show that every acyclic wiring diagram decomposes into the form (up to permutations): 
\begin{align*}
 \tikzfig{poly_proof_8} = \quad  \tikzfig{poly_proof_9}
\end{align*}
where the dashed connecting wire represents the existence of either 1 or 0 wires. Now note that however complex the tail of the inductive decomposition is, its composition with $t_1$ can be formed using either the $\otimes$ or the $\circ_x$ operations. As a result one can construct the entire candidate algebra just by defining the algebra on those generators. The interchange law between composition and $\otimes$ that guarantees functorality of the algebra. 
\end{proof}

To really complete the result, we hope to see a roundtrip relation as proven for symmetric monoidal categories in \cite{patterson2021wiring}. However, in this case the roundtrip holds on the nose, because in this setting there are no coherences to take care of in polycategories, which are morally strict, being closer in spirit already to props than to general symmetric monoidal categories. 

\section{String diagrams for enriched symmetric monoidal categories} \label{app:EnrichedStringDiagrams}

We show in the main text that functors $F:\cW^A\to K$ correspond to $K$-enriched symmetric monoidal categories. What is more, is that the diagrams that we use to represent $F:\cW^A\to K$ can easily be adapted to give a convenient string diagrammatic notation for these enriched SMCs.

Consider the following diagram
\beq\tikzfig{operadexamplealgebra}. \eeq
If we are just interested in a particular $F$ then we can leave the application of $F$ implicit. Similarly, if we are not interested in using this to further refine some larger diagram, then the ``boxing'' is also irrelevant. That is, we don't lose any relevant information by just writing this as
\beq\tikzfig{enrichedstringdiagram},\eeq
where now the horizontal black wires represent objects in $K$ while the vertical red wires represent objects in $\mathcal{C}_F$. That is, this single diagram now captures the interplay between the enriching and enriched categories.

This also interacts nicely with the shorthand notation that we introduced in Eqs.~\eqref{def:seq}\eqref{def:par}. For example, using this we can write equations such as:
\beq\tikzfig{enrichedstringdiagram1}\ \ = \ \ \tikzfig{enrichedstringdiagram2}\ \ = \ \ \tikzfig{enrichedstringdiagram3},
\eeq
that is, we can move composition from being represented as connectivity in the diagram to being represented by operations in $K$.

Note that this is a substantial part of the diagrammatic language which was introduced in Ref.~\cite{schmid2020unscrambling} for studying the interaction between a theory of causation (the red wires) and a theory of inference (the black wires). Exploring these connections more deeply is a project left to future work.

\end{document}